\documentclass[11pt,reqno]{article}
\usepackage{amsmath}
\usepackage{mathrsfs}
\usepackage{amsfonts}
\usepackage{amssymb}

\usepackage{amssymb}
\usepackage{amsthm}
\usepackage{graphicx}              
\usepackage{amsmath}               
\usepackage{amsfonts}              
\usepackage{amsthm}                
\usepackage{setspace}
\usepackage{epstopdf}
\usepackage{epsfig}

\textwidth 160mm

\textheight 230mm

\hoffset -0.8in

\parindent 5mm

\voffset -1.5cm

\newtheorem{theorem}{Theorem}[section]
\newtheorem{proposition}{Proposition}[section]
\newtheorem{lemma}{Lemma}[section]
\newtheorem{corollary}{Corollary}[section]
\newtheorem{remark}{Remark}[section]
\newtheorem{definition}{Definition}[section]

\renewcommand{\thefootnote}{\fnsymbol{footnote}}
\makeatletter

\newcommand{\Rmnum}[1]{\expandafter\@slowromancap\romannumeral #1@}
\makeatother

\allowdisplaybreaks
\numberwithin{equation}{section}

\title{
Convergence to harmonic maps for the Landau-Lifshitz flows on two dimensional hyperbolic spaces}

\author{ Ze Li\qquad Lifeng Zhao }

\date{}

\begin{document}

\maketitle

\renewcommand{\thefootnote}{\fnsymbol{footnote}}
\footnotetext{\hspace*{-5mm}
\begin{tabular}{@{}r@{}p{16cm}@{}}
Ze Li, E-mail: rikudosennin@163.com; \mbox{  }Lifeng Zhao, E-mail: zhaolf@ustc.edu.cn.
\end{tabular}}

\noindent{\bf Abstract}
In this paper, we prove that the solution of the Landau-Lifshitz flow $u(t,x)$ from $\mathbb{H}^2$ to $\mathbb{H}^2$ converges to some harmonic map as $t\to\infty$. The essential observation is that although there exist infinite numbers of harmonic maps from $\Bbb H^2$ to $\Bbb H^2$, the heat flow initiated from $u(t,x)$ for any given $t>0$ converges to the same harmonic map as the heat flow initiated from $u(0,x)$. This observation enables us to construct a variant of Tao's caloric gauge to reduce the convergence to harmonic maps for the Landau-Lifshitz flow to the decay of the corresponding heat tension field.  The advantage of the strategy used in this paper is that we can see the limit harmonic map directly by evolving $u(0,x)$ along a heat flow without evolving the Landau-Lifshitz flow to the infinite time.

\noindent{\bf Keywords:} Landau-Lifshitz; asymptotic behavior, hyperbolic space\\

\bigskip

\section{Introduction}
Let $(M,h)$ be a Riemannian manifold and $(N,J,g)$ be a K\"ahler manifold, the Landau-Lifshitz flow is a map $u(x,t):M\times [0,\infty)\to N$ satisfying
\begin{equation}\label{1}
\begin{cases}
u_t =\alpha\tau(u)-\beta J(u)\tau(u)\\
u\upharpoonright_{t=0} = u_0(x),
\end{cases}
\end{equation}
where $\alpha\ge0$, $\beta\in\Bbb R$.
In the local coordinates $(x_1,x_2)$ for $M$ and $(y_1,y_2)$ for $N$, $\tau(u)$ is given by
$$
\tau(u)=\big(\Delta_{\mathbb{H}^2}u^{l}+h^{ij}\overline{\Gamma}^l_{m,n}(u)\frac{\partial u^{m}}{\partial x^i}\frac{\partial u^{n}}{\partial x^j}\big)\frac{\partial}{\partial y^l},
$$
where $h_{ij}dx^idx^j$ is the metric tension for $M$, $(h^{ij})$ is its inverse, $\overline{\Gamma}^{l}_{m,n}(u)$ is the Christoffel symbol at $u$. Usually, $\alpha\ge0$ is called the Gilbert constant. When $\alpha=0$, (\ref{1}) is called the Schr\"odinger flow. When $\beta=0$, $\alpha>0$, it reduces to the heat flows of harmonic maps.
In this paper, we consider the case when $M=\Bbb H^2$ and $N=\Bbb H^2$.

Besides the physical motivation, such as the continuous isotropic Heisenberg spin model, the gauge theory, the dynamics of the magnetization field inside ferromagnetic material (Landau-Lifshitz \cite{LL}), the Landau-Lifshitz flow (LL) is also a typical model in the differential geometry.  We recall the following non-exhaustive list of works. The heat flow (HF) case $(\alpha>0, \beta=0)$ has been intensively studied in the past 60 years, for instance Eells and Sampson \cite{ES} for HF from closed manifolds to closed manifolds, Hamilton \cite{Har} for HF with Dirichlet boundary condition, Li and Tam \cite{LT} for HF from complete manifolds to complete manifolds. When $\alpha>0,\beta\in\Bbb R$, the local well-posedness and partial regularity for weak solutions were considered by many authors for instance \cite{GH,M,ZGT}.  The local well-posedness of Schr\"odinger flow was studied by Sulem, Sulem and Bardos \cite{SSB} for $\mathbb{S}^2$ targets, Ding and Wang \cite{DW}, McGahagan \cite{Mc} for general K\"ahler manifolds. Chang, Shatah and Uhlenbeck \cite{CSU} obtained the global well-posedness of maps into closed Riemann surfaces for small initial data. The global well-posedness for maps from $\Bbb R^d$ into $\mathbb{S}^2$ with small critical Sobolev norms was proved by Bejenaru, Ionescu, Kenig and Tataru \cite{BIK, BIKT1}. The one dimensional case was studied by Rodnianski, Rubinstein and Staffilani \cite{RRS}.

The dynamic behavior of LL is known in some cases.  For the equivariant Schr\"odinger flows from $\Bbb R^2$ into $\Bbb S^2$ with energy below the ground state and equivariant flows from $\Bbb R^2$ into $\Bbb H^2$ with initial data of finite energy, the global well-posedness and scattering in the gauge sense were proved by Bejenaru, Ionescu, Kenig and Tataru \cite{BIKT2, BIKT3}. For the m-equivariant LL from $\Bbb R^2$ into $\Bbb S^2$ with initial data near the harmonic map, Gustafson, Kang, Tsai \cite{GKT1,GKT2} proved asymptotic stability for $m\ge4$ and later the case $m=3$ was proved by Gustafson,  Nakanishi, Tsai \cite{GNT}. Moreover, there exist blow up solutions near the harmonic maps, see Merle, Raphael, Rodnianski \cite{MPR} and Perelman \cite{P} for 1-equivariant 2D Schr\"odinger maps, and Chang, Ding, Ye \cite{CDY} for 2D symmetric heat flows.

Usually the dynamics for flows defined on the Euclidean space and curved space are typically different and of independent interest. For the heat flow, $u:[0,T]\times\Bbb H^n\to\Bbb H^n$ is of particular interest because it is closely related to the Schoen-Li-Wang conjecture, namely any quasiconformal boundary map gives rise to a harmonic map of hyperbolic space. (see for instance Lemm, Markovic \cite{LM}) The wave map dynamics on curved space especially hyperbolic space were studied in the sequel works of Lawrie, Oh, Shahshahani \cite{LA},  \cite{LSO1,LSO2,LSO}.

In this paper, we study the long time behaviors of solutions to (\ref{1}) for $\alpha>0$.  In our previous paper \cite{LZ}, for LL from $\Bbb R^2$ to a compact Riemann surface, we proved that any initial data with energy below the critical energy will evolve to a global solution and converge to a constant map in the energy space. For LL from $\mathbb{H}^2$ to $\mathbb{H}^2$, we aim to prove that the solution exists globally and will converge to a harmonic map as $t\to\infty$.  In order to prove the convergence to harmonic maps, one may study the corresponding linearized equation, which works well for the equivariant case. However, the linearization method is not available in the general case because there are infinite numbers of harmonic maps. In this paper, we apply the caloric gauge technique initially introduced by Tao \cite{Tao}.

The caloric gauge originally used by Tao was applied to solve the global regularity of wave maps from $\Bbb R^{2+1}$ to $\Bbb H^n$ in a sequel of papers \cite{Tao3,Tao4}. We briefly recall the main idea of caloric gauge in the wave map setting. The first step is to evolve the solution of the wave map $u(t,x)$ along a heat flow, i.e., solve the heat flow equation for $\widetilde{u}(s,t,x)$ with initial data $u(t,x)$. If there exists no non-trivial harmonic map,  one can suppose that the corresponding heat flow converges to a fixed point $Q$. For any given orthonormal frame at the point $Q$, we can pullback the orthonormal frame parallel with respect to $s$ along the heat flow to obtain the frame at $\widetilde{u}(s,t,x)$, especially $u(t,x)$ when $s=0$. Then one has a scalar system for the differential fields and connection coefficients after rewriting (\ref{1}) under the constructed frame. The caloric gauge can be seen as a nonlinear Littlewood-Paley decomposition, and it removes some troublesome frequency interactions.
Generally the caloric gauge works well below the critical energy, where no harmonic map occurs.  In our case, it makes no sense to study the dynamics below the critical energy because for any given $\lambda\in (0,\infty)$, there exists a harmonic map $Q_{\lambda}$ whose energy is exactly $\lambda$.  However, there is an obvious advantage of the caloric gauge in our case, i.e., one can a priori see the limit harmonic map without evolving LL to the infinity time. In fact, denote the solution of the heat flow with initial data $u(0,x)$ by $U(t,x)$, then it is known that $U(t,x)$ converges to some harmonic map $Q(x)$ as $t\to\infty$. The key observation is that one can still expect that the solution $u(t,x)$ of (\ref{1}) also converges to the same harmonic map $Q(x)$ as $t\to\infty$. This informal heuristic idea combined with the caloric gauge reduces the convergence of LL to proving the decay of the heat tension filed.  There are two main obstacles while applying the caloric gauge in the appearance of non-trivial harmonic maps. One is to guarantee that all the heat flows initiated from $u(t,x)$ for different $t$ converge to the same harmonic map. The other is the lack of global bounds for the derivatives of the induced connection coefficient. In fact, since formally the induced connection coefficient is solved from the infinity time, an $L^1$ integrability with respect to time is needed, but harmonic maps prevent the energy from decaying to zero as time goes to infinity, then we have no enough decay to gain the integrability.

The first obstacle is overcome by using the structure of LL. In order to construct the caloric gauge, one needs to prove the heat flow with $u(t,x)$ as the initial data converges to the same harmonic map independent of $t$. We remark that it is not a trivial fact, because it is false if we only consider $t$ as a smooth parameter, i.e., in the homotopy class. Indeed, it is known that there exist a family of harmonic maps $\{Q_{\lambda}\}$ which depend smoothly with respect to $\lambda\in(0,1)$. Then perturbing the initial data $Q_{\lambda}$ of the heat flow to be $Q_{\lambda'}$ yields a different limit harmonic map, since any harmonic map remains time-independent under the heat flow. This inspires us to use the structure of LL.  The key point is to study the evolution of $\partial_t u$ with respect to the heat flow. By a monotonous property observed first by Hartman \cite{Hh} and the decay estimates of the heat semigroup, we can prove the distance between the heat flows initiated from $u(t_1)$ and $u(t_2)$ goes to zero as $s\to\infty$. Thus the limit harmonic map for the heat flow generated from $u(x,t)$ are all the same for different $t$.

We remove the second obstacle by the smoothing effect. After constructing the caloric gauge, we obtain the gauged equation for the corresponding differential fields $\phi_i$ and connection coefficients $A_i$. Then it suffices to prove the decay of the corresponding heat tension field governed by a Ginzburg-Landau type system (see (\ref{heat})). The most difficult term in this system is $\nabla A_i$, the derivative of the connection coefficients.  Because the energy of the solution to (\ref{1}) is strictly away from zero due to the appearance of the harmonic map, one can not expect any integrability of $\phi_i$ with respect to $s$. Thus if one tries to obtain a global bound for $\nabla A_i$, one has to put $\nabla\partial_s\widetilde{u}$ in $L^1_s$. (see (\ref{edf}) for the expression for $A_i$) The term $\nabla\partial_s\widetilde{u}$ might be put in $L^1_s$ by constructing a proper auxiliary function which satisfies a proper semiliear heat equation and controls $\nabla\partial_s\widetilde{u}$ and applying the maximum principle to obtain a pointwise estimate. Instead of constructing such an auxiliary function, we use an indirect but more robust method. By applying the smoothing effect of the heat semigroup, it suffices to bound $\nabla A_i\phi_s$ in $\dot{H}^{-1}$. Then by duality, one can move the derivative from $\partial_iA_i$ to $\phi_s$. Since we have a a-prior bound for $\nabla\phi_s$ in terms of $\nabla\partial_tu$, which is also a-prior bounded in $L^{2}_{t,x}$ through some delicate energy arguments for (\ref{1}), the $\partial_iA_i\phi_s$ term can be tackled without studying $\partial_iA_i$ itself.

In order to state our main theorem, we introduce some notions.
\begin{definition}
We say $Q:\mathbb{H}^2 \to \mathbb{H}^2$ is an admissible harmonic map if $\overline{Q(\Bbb H^2)}$ is compact and $\nabla^kQ\in L^2(\Bbb H^2)$ for $k\in\{1,2,3\}$. For this $Q$, the space $\mathcal{H}_{Q}^3$ is defined by (\ref{h897}).  Denote the union of $\mathcal{H}_{Q}^3$ with $Q$ ranging over all the admissible harmonic maps by $\mathrm{H}^3$.
\end{definition}

{\bf Remark 1.1}
We remark that any map which coincides with $Q$ outside of a compact subset of $\Bbb H^2$ is a member of $\mathcal{H}_{Q}^3$. Any analytic function $f:\Bbb C\to\Bbb C$ with $f(\Bbb D)\Subset\Bbb D$ where $\Bbb D$ is the Poincare disk is an admissible harmonic map, see Appendix for the proof. The harmonic map studied in Lawrie, Oh, Shahshahani \cite{LSO7} is in fact $f(z)=\lambda z$ with $\lambda\in(0,1)$.

The main result of this paper is the following.
\begin{theorem}\label{aaa.1}
Let $\alpha>0$, $\beta\in \Bbb R$. For any initial data $u_0\in \mathrm{H}^3$, there exists a global solution to (\ref{1}) and as $t\to\infty$, $u(t,x)$ converges to some harmonic map $Q_{\infty}:\Bbb H^2\to\Bbb H^2$, namely
$$
\mathop {\lim }\limits_{t \to \infty } \mathop {\sup }\limits_{x \in {\mathbb{H}^2}} {d_{{\mathbb{H}^2}}}\left( {u(t,x),Q_{\infty}(x)} \right) = 0.
$$
\end{theorem}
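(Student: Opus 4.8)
The plan is to combine three ingredients: (i) global existence and a priori energy bounds for the Landau–Lifshitz flow \eqref{1} with $\mathrm{H}^3$ data; (ii) a heat-flow construction of the limit $Q_\infty$ together with the observation that the caloric heat flow initiated from $u(t,x)$ converges to the \emph{same} harmonic map for every $t\ge 0$; and (iii) a caloric-gauge reduction that translates convergence of $u(t,\cdot)$ to $Q_\infty$ into decay, as $t\to\infty$, of the heat tension field measured through the gauged differential fields $\phi_i$. I would first run the harmonic map heat flow with initial data $u_0\in\mathcal{H}^3_Q$; by the Eells–Sampson / Li–Tam theory on complete manifolds and the Hartman-type monotonicity of the energy density under the heat flow, this flow converges uniformly to an admissible harmonic map, which I christen $Q_\infty$. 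Independently, one solves \eqref{1} to get a global solution $u(t,x)$ staying in (a fixed ball of) $\mathrm{H}^3$, with the key quantitative input that $\nabla\partial_t u\in L^2_{t,x}$, obtained from differentiated energy identities for \eqref{1}.

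The heart of the argument is the caloric gauge. For each fixed $t$, I extend $u(t,\cdot)$ to $\widetilde u(s,t,x)$ solving the harmonic map heat flow in $s$. The first obstacle flagged in the introduction — that all these flows must converge to one and the same $Q_\infty$, not to different members of the continuum $\{Q_\lambda\}$ — I handle by studying $\partial_t\widetilde u$ as a solution of the Jacobi (linearized heat) equation along $\widetilde u$: using the Hartman monotonicity of the target distance and the pointwise decay estimates for the (pulled-back) heat semigroup, $\sup_x d_{\mathbb H^2}(\widetilde u(s,t_1,x),\widetilde u(s,t_2,x))\to 0$ as $s\to\infty$, so the $s\to\infty$ limits coincide; hence the caloric gauge can be anchored at the single harmonic map $Q_\infty$ and the parallel-transport construction of the frame at $s=0$ produces well-defined $\phi_i$, $A_i$, $\phi_s$, $A_s$ on $u(t,\cdot)$. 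Rewriting \eqref{1} in this gauge yields a scalar Ginzburg–Landau / NLS-type system; because $\sup_x d_{\mathbb H^2}(u(t,x),Q_\infty(x))$ is controlled by $\|\phi_s(t)\|$ in a suitable norm (the tension field $\phi_s$ vanishing identically exactly when $u(t,\cdot)$ is the harmonic map to which its heat flow converges), it suffices to prove $\|\phi_s(t,\cdot)\|\to 0$.

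To close, I analyze the heat equation \eqref{heat} satisfied by $\phi_s$. The dangerous term is $\nabla A_i\,\phi_s$: no integrability of $\phi_i$ in $s$ is available here since the energy does not decay to zero (a harmonic map survives). Rather than building an auxiliary function and invoking the maximum principle for a pointwise bound on $\nabla\partial_s\widetilde u$, I would use the smoothing effect of the heat semigroup to reduce to estimating $\nabla A_i\,\phi_s$ in $\dot H^{-1}$; by duality the derivative is moved onto $\phi_s$, and $\|\nabla\phi_s\|$ is controlled by $\|\nabla\partial_t u\|$, which lies in $L^2_{t,x}$ by the energy estimates for \eqref{1}. This yields $\|\phi_s(t)\|_{L^2_x}\in L^2_t$ together with a differential inequality forcing $\|\phi_s(t)\|\to 0$, and hence the uniform convergence $d_{\mathbb H^2}(u(t,x),Q_\infty(x))\to 0$.

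The main obstacle, in my estimation, is exactly the control of $\nabla A_i$: obtaining a \emph{global-in-$s$} bound for the connection coefficients and their derivatives in the presence of a nonzero-energy limit, where the usual subcritical decay is absent. Getting the $\dot H^{-1}$ duality argument to mesh cleanly with the $L^2_{t,x}$ bound on $\nabla\partial_t u$ — and verifying that bound through the energy method for \eqref{1} on $\mathbb H^2$ with its nontrivial geometry — is where the real work lies; the convergence of the various heat flows to a common harmonic map, while conceptually delicate, follows relatively directly from Hartman's monotonicity once set up.
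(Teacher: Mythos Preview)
Your proposal is correct and follows essentially the same route as the paper: global well-posedness with the crucial $L^2_{t,x}$ bound on $\nabla\partial_t u$, the Hartman-type monotonicity argument showing the heat flows from $u(t,\cdot)$ all converge to the same $Q_\infty$, the caloric gauge construction, and the $\dot H^{-1}$ duality trick to handle $\nabla A_i\,\phi_s$ via smoothing. The only minor deviation is in the endgame: rather than a differential inequality for $\|\phi_s(t)\|$, the paper bounds $d_{\mathbb H^2}(u(t,x),Q_\infty(x))\le\int_0^\infty\|\partial_s\widetilde u(s,t,\cdot)\|_{L^\infty_x}\,ds$ directly, splits the $s$-integral into three regimes, and uses the smoothing estimate (your $\dot H^{-1}$ argument, packaged as Lemma~\ref{jian}) together with the $L^2_t$ smallness of $\partial_t u$ and $\nabla\partial_t u$ on a tail $[T_1,\infty)$ to make each piece small.
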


{\bf Remark 1.2} The limit harmonic map $Q_{\infty}$ in Theorem 1.1 is in fact $Q$ if $u_0\in \mathcal{H}^3_Q$. This is related to the uniqueness of harmonic maps with prescribed boundary harmonic map. But we will not go into further details on this problem in this paper, one may see [Lemma 2.4, \cite{Lize}] for details.

This paper is organized as follows. In Section 2, we recall some background materials and prove an equivalence lemma for the intrinsic and extrinsic Sobolev norms in some case. In Section 3, we construct the caloric gauge and obtain the estimates of the connection coefficients. In Section 4, we prove the local and global well-posedness of (\ref{1}) by energy arguments. In Section 5, we study the gauged system and prove the decay of the heat tension filed which concludes the proof of Theorem \ref{aaa.1}.

\section{\bf Preliminaries}
In this section, we recall some standard preliminaries on the geometry notions of the hyperbolic spaces along with some Sobolev embedding inequalities. For the study of the well-posedness theories, we need to use both the intrinsic and extrinsic formulations of the Sobolev spaces, thus we establish an equivalence relationship for the two formulations in some case, namely Lemma \ref{new}.

\subsection{The global coordinates and definitions of the function spaces}
The covariant derivative in $TN$ is denoted by $\widetilde{\nabla}$, the covariant derivative induced by $u$ in $u^*(TN)$ is denoted by $\nabla$. The Riemann curvature tension of $N$ is denoted by $\mathbf{R}$. The components of Riemann metric are denoted by $h_{ij}$ for $M$ and $g_{ij}$ for $N$. And $\Gamma^{k}_{lj}$, $\overline{\Gamma}^{k}_{lj}$ denote the Christoffel symbols for $M$ and $N$ respectively.

We recall some facts on hyperbolic spaces. Let $\Bbb R^{2+1}$ be the Minkowski space with the Minkowski metric $-(dx^0)^2+(dx^1)^2+(dx^2)^2$. Define a bilinear form on $\Bbb R^{2+1}\times \Bbb R^{2+1}$,
$$
[x,y]=x^0y^0-x^1y^1-x^2y^2.
$$
The hyperbolic space $\mathbb{H}^2$ is defined as
$$\mathbb{H}^2=\{x\in \Bbb R^{2+1}: [x,x]=1 \mbox{  }{\rm{and}}\mbox{  }x^0>0\}.$$
The Riemann metric equipped on $\mathbb{H}^2$ is the pullback of the Minkowski metric by the inclusion map $\iota:\mathbb{H}^2\to \Bbb R^{2+1}.$
The Iwasawa decomposition gives a global system of coordinates. Define the diffeomorphism $\Psi:\Bbb R\times \Bbb R\to \mathbb{H}^2$,
\begin{align}\label{vg}
\Psi(x_1,x_2)=({\rm{cosh}} x_2+e^{-x_2}|x_1|^2/2, {\rm{sinh}} x_2+e^{-x_2}|x_1|^2/2, e^{-x_2}x_1).
\end{align}
The Riemann metric with respect to this coordinate system is given by
$$
e^{-2x_2}(dx_1)^2+(dx_2)^2.
$$
The corresponding Christoffel symbols are
\begin{align}\label{christ}
\Gamma^1_{2,2}=\Gamma^2_{2,1}=\Gamma^2_{2,2}=\Gamma^1_{1,1}=0; \mbox{ }\Gamma^1_{1,2}=-1,, \mbox{  }\Gamma^2_{1,1}=e^{-2x_2}.
\end{align}
And $J(\frac{\partial}{\partial x_1})=e^{-x_2}\frac{\partial}{\partial x_2}$. For any $(t,x)$ and $u:[0,T]\times\mathbb{H}^2\to \Bbb H^2$, we define an orthonormal frame at $u(t,x)$ by
\begin{align}\label{frame}
\Theta_1(u(t,x))=e^{u^2(t,x)}\frac{\partial}{\partial y_1}; \mbox{  }\Theta_2(u(t,x))=\frac{\partial}{\partial y_2}.
\end{align}
It is easily seen $\Theta_2=J\Theta_1$.
Let $X,Y,Z\in TN$, recall the identity for Riemannian curvature on $\Bbb H^d$
\begin{align*}
{\widetilde{\nabla} _X}{\widetilde{\nabla} _Y}Z - {\widetilde{\nabla }_Y}{\widetilde{\nabla }_X}Z - {\widetilde{\nabla }_{[X,Y]}}Z = \left( {X \wedge Y} \right)Z,
\end{align*}
where we use the simplicity notation
\begin{align*}
\left( {X \wedge Y} \right)Z = \left\langle {X,Z} \right\rangle Y - \left\langle {Y,Z} \right\rangle X.
\end{align*}
As a direct consequence of this formula and the comparability, one has for $X,Y,Z\in u^{*}TN$ that
\begin{align*}
&{\nabla _i }\left( {\bf R\left( {X,Y} \right)Z} \right) = {\nabla _i }\left( {\left\langle {X,Z} \right\rangle Y} \right) - {\nabla _i }\left( {\left\langle {Y,Z} \right\rangle X} \right) \\
&= \left\langle {X,Z} \right\rangle {\nabla _i }Y + \left\langle {{\nabla _i }X,Z} \right\rangle Y + \left\langle {X,{\nabla _i }Z} \right\rangle Y - \left\langle {Y,Z} \right\rangle {\nabla _i }X - \left\langle {{\nabla _i }Y,Z} \right\rangle X - \left\langle {Y,{\nabla _i }Z} \right\rangle X \\
&= {\bf R}\left( {X,{\nabla _i}Y} \right)Z + {\bf R}\left( {{\nabla _i }X,Y} \right)Z + {\bf R}\left( {X,Y} \right){\nabla _i }Z.
\end{align*}
Therefore we obtain a useful identity
\begin{align}\label{curvature2}
{\nabla _i }\left( {{\bf R}\left( {X,Y} \right)Z} \right) = {\bf R}\left( {X,{\nabla _i }Y} \right)Z + {\bf R}\left( {{\nabla _i }X,Y} \right)Z + {\bf R}\left( {X,Y} \right){\nabla _i }Z.
\end{align}

Let $H^k(\mathbb{H}^2;\Bbb R)$ be the usual Sobolev space for scalar functions defined on manifolds, see for instance Hebey \cite{Hebey}. It is known that $C^{\infty}_c(\mathbb{H}^2;\Bbb R)$ is dense in $H^k(\mathbb{H}^2;\Bbb R)$. We also recall the norm of $H^k$:
$$
\|f\|^2_{H^k}=\sum^k_{l=1}\|\nabla^l f\|^2_{L^2_x},
$$
where $\nabla^l f$ is the covariant derivative, for instance, $\nabla f$ and $\nabla^2 f$ in the local coordinates can be written as
$$
\nabla f=\frac{\partial f}{\partial x_i}dx^i, \mbox{  }\nabla^2 f=\left( {\frac{{{\partial ^2}f}}{{\partial {x_i}\partial {x_j}}} - \Gamma _{ij}^k\frac{{\partial f}}{{\partial {x_k}}}} \right)d{x^i} \otimes d{x^j}.
$$
The norm $|\nabla^l f|$ is taken by viewing $\nabla^l f$ as a $(0,l)$ type tension field on $\mathbb{H}^2$.
For maps $u:\mathbb{H}^2\to \mathbb{H}^2$, the intrinsic Sobolev semi-norm $\mathfrak{H}^k$ is given by
$$
\|u\|^2_{{\mathfrak{H}}^k}=\sum^k_{i=1}\int_{\mathbb{H}^2} |\nabla^{i-1} du|^2 {\rm{dvol_h}}.
$$
Recall the global coordinates given by (\ref{vg}), then map $u:\mathbb{H}^2\to\mathbb{H}^2$ can be viewed as a vector-valued function $u:\mathbb{H}^2\to \Bbb R^2$. Indeed, for $P\in \mathbb{H}^2$ the vector $(u^1(P),u^2(P))$ is defined by $\Psi(u^1(P),u^2(P))=u(P)$. Let $Q:\Bbb H^2\to\Bbb H^2$ be an admissible harmonic map. Then the extrinsic Sobolev space is defined as
\begin{align}\label{h897}
\mathcal{H}^k_{Q}=\{u: u^1-Q^1(x), u^2-Q^2(x)\in H^k(\mathbb{H}^2;\Bbb R)\},
\end{align}
where $(Q^1(x),Q^2(x))\in \Bbb R^2$ is the corresponding components of $Q(x)$ under the coordinate given by (\ref{vg}). Denote the union of $\mathcal{H}_{Q}^3$ with $Q$ ranging over all the admissible harmonic maps by $\mathrm{H}^3$. Equip $\mathcal{H}^k_{Q}$ with the following metric
\begin{align}\label{poi65gv}
{\rm{dist}}_{Q,k}(u,w)=\sum^{2}_{j=1}\|u^j-w^j\|_{H^k},
\end{align}
for $u,w\in\mathcal{H}^k_{Q}$. Since $C^{\infty}_c(\Bbb H^2,\Bbb R^2)$ is dense in $H^k$ (see Hebey \cite{Hebey}), $\mathcal{H}^k_{Q}$ coincides with the completion of $\mathcal{D}$, which denotes the maps from $\Bbb H^2$ to $\Bbb H^2$ coinciding with $Q$ outside of some compact subset of $M=\Bbb H^2$, under the metric given by
(\ref{poi65gv}). In the following, without of confusing we write $\mathcal{H}^k_Q$ as $\mathcal{H}^k$ for simplicity.

\subsection{The Fourier transform on hyperbolic spaces and Sobolev embedding}
The Fourier transform takes proper functions defined on $\mathbb{H}^2$ to functions defined on $\Bbb R\times S^1$, see for instance Helgason \cite{Hel}.
For $\omega\in S^1$, and $\lambda\in \Bbb C$, let $b(\omega)=(1,\omega)\in \Bbb R^3$, we define
$$
h_{\lambda,\omega}:\mathbb{H}^2\to \Bbb C, \mbox{  }h_{\lambda,\omega}=[x,b(\omega)]^{i\lambda-\frac{1}{2}}.
$$
The Fourier transform of $f\in C_0(\mathbb{H}^2)$ is defined by
$$
\widetilde{f}\left( {\lambda ,\omega } \right) = \int_{{\Bbb H^2}} {f(x){h_{\lambda ,\omega }}(x){\rm{dvol_h}} = } \int_{{\Bbb H^2}} {f(x){{\left[ {x,b(\omega )} \right]}^{i\lambda  - \frac{1}{2}}}{\rm{dvol_h}}}.
$$
The corresponding Fourier inversion formula is given by
$$
f(x) = \int_0^\infty  {\int_{{S^1}} {f\left( {\lambda ,\omega } \right)} } {\left[ {x,b(\omega )} \right]^{ - i\lambda  - \frac{1}{2}}}{\left| {c(\lambda )} \right|^{ - 2}}d\lambda d\omega,
$$
where $c(\lambda)$ is the Harish-Chandra c-function on $\mathbb{H}^2$, which is defined for some suitable constant $C$ by
$$
c(\lambda)=C\frac{\Gamma(i\lambda)}{\Gamma(\frac{1}{2}+i\lambda)}.
$$
The Plancherel theorem is as follows
$$
\int_{{\mathbb{H}^2}} {f(x)\overline {g(x)}{\rm{dvol_h}} = \frac{1}{2}} \int_{\mathbb{R} \times {S^1}} {f\left( {\lambda ,\omega } \right)\overline {g\left( {\lambda ,\omega } \right)} {{\left| {c(\lambda )} \right|}^{ - 2}}d\lambda d\omega }.
$$
Thus any bounded multiplier $m:\Bbb R\to \Bbb C$ defines a bounded operator $T_m$ on $L^2(\mathbb{H}^2)$ by
$$
\widetilde{T_m(f)}(\lambda, \omega)=m(\lambda)\widetilde{f}(\lambda,\omega).
$$
We define the operator $(-\Delta)^{\frac{s}{2}}$ by the Fourier multiplier $\lambda\to (\frac{1}{4}+\lambda^2)^{\frac{s}{2}}$.
We now recall the Sobolev inequalities of functions in $H^k$.

\begin{lemma}\label{wusijue}
If $f\in H^{3}(\mathbb{H}^2)$, then for $1<p<\infty,$ $p\le q\le \infty$, $0<\theta<1$, $1<r<2$, $r\le l<\infty$, $1<\alpha\le 3$ following inequalities hold
\begin{align}
{\left\| f \right\|_{{L^2}}} &\lesssim {\left\| {\nabla f} \right\|_{{L^2}}} \label{uv1}\\
 {\left\| f \right\|_{{L^q}}} &\lesssim \left\| {\nabla f} \right\|_{{L^2}}^\theta \left\| f \right\|_{{L^p}}^{1 - \theta }\mbox{  }{\rm{when}}\mbox{  }\frac{1}{p} - \frac{\theta }{2} = \frac{1}{q} \label{uv2}\\
 {\left\| f \right\|_{{L^l}}} &\lesssim {\left\| {\nabla f} \right\|_{{L^r}}}\mbox{  }\mbox{  }\mbox{  }\mbox{  }\mbox{  }\mbox{  }\mbox{  }\mbox{  }\mbox{  }{\rm{when}}\mbox{  }\frac{1}{r} - \frac{1}{2} = \frac{1}{l} \label{uv3}\\
 {\left\| f \right\|_{{L^\infty }}} &\lesssim {\left\| {{{\left( { - \Delta } \right)}^{\frac{\alpha }{2}}}f} \right\|_{{L^2}}}\mbox{  }\mbox{  }\mbox{  }{\rm{when}}\mbox{  }\alpha>1 \label{uv4}\\
 {\left\| {\nabla f} \right\|_{{L^2}}} &\sim{\left\| {{{\left( { - \Delta } \right)}^{\frac{1}{2}}}f} \right\|_{{L^2}}} \label{uv5}.
 \end{align}
\end{lemma}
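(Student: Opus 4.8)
The plan is to reduce all five inequalities to classical facts about the hyperbolic plane, exploiting the Fourier calculus recalled above together with two structural features of $\mathbb{H}^2$: it has $C^\infty$-bounded geometry, and, being a Cartan--Hadamard manifold, it enjoys a spectral gap, $\mathrm{spec}(-\Delta)=[\tfrac14,\infty)$.

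First I would dispose of (\ref{uv5}) and (\ref{uv1}) by purely spectral considerations. Under the Fourier transform on $\mathbb{H}^2$ the Laplace--Beltrami operator acts as the multiplier $\lambda\mapsto\tfrac14+\lambda^2$, which is exactly the multiplier defining $(-\Delta)^1$ in the convention fixed above; hence, by Plancherel and integration by parts, $\|\nabla f\|_{L^2}^2=\langle-\Delta f,f\rangle=\|(-\Delta)^{1/2}f\|_{L^2}^2$, which is (\ref{uv5}). The same identity reads $\|\nabla f\|_{L^2}^2=\tfrac12\int_{\mathbb{R}\times S^1}(\tfrac14+\lambda^2)|\widetilde f(\lambda,\omega)|^2|c(\lambda)|^{-2}\,d\lambda\,d\omega\ge\tfrac14\|f\|_{L^2}^2$, which is (\ref{uv1}), the Poincar\'e inequality produced by the spectral gap.

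Next, for (\ref{uv4}) I would write $f=(-\Delta)^{-\alpha/2}g$ with $g=(-\Delta)^{\alpha/2}f\in L^2$, which is legitimate since $(\tfrac14+\lambda^2)^{-\alpha/2}$ is a bounded multiplier. It then suffices to show that the radial convolution kernel $\mathcal{G}_\alpha(d(x,y))$ of $(-\Delta)^{-\alpha/2}$ lies in $L^2(\mathbb{H}^2)$ when $\alpha>1$, since then $\|f\|_{L^\infty}\le\sup_x\|\mathcal{G}_\alpha(d(x,\cdot))\|_{L^2}\,\|g\|_{L^2}$ by Cauchy--Schwarz. The kernel bound I would extract from the subordination formula applied to the heat kernel of $\mathbb{H}^2$: near the diagonal $\mathcal{G}_\alpha(r)$ behaves like the Euclidean Riesz kernel $r^{\alpha-2}$, which is square integrable against the area element $r\,dr\,d\theta$ precisely when $\alpha>1$, while for large $r$ the spectral gap forces exponential decay, so the tail is harmless. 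An alternative I would keep in reserve is to use the bounded geometry of $\mathbb{H}^2$ directly: the Euclidean embedding $H^\alpha\hookrightarrow L^\infty$, valid for $\alpha>1=n/2$, holds uniformly on balls of a fixed radius and can be glued over a locally finite cover, after replacing $(-\Delta)^{\alpha/2}$ by the comparable inhomogeneous operator $(1-\Delta)^{\alpha/2}$.

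Finally, (\ref{uv3}) and (\ref{uv2}) I would deduce from the classical Sobolev and Gagliardo--Nirenberg inequalities. Since $n=2$, the relation $\tfrac1r-\tfrac12=\tfrac1l$ is the scaling-critical one $\tfrac1r-\tfrac1l=\tfrac1n$, so (\ref{uv3}) is the endpoint Sobolev inequality $\|f\|_{L^{2r/(2-r)}}\lesssim\|\nabla f\|_{L^r}$; this holds on $\mathbb{H}^2$ because a Cartan--Hadamard surface satisfies the Euclidean isoperimetric inequality, hence the $L^1$ case $\|f\|_{L^2}\lesssim\|\nabla f\|_{L^1}$, and then, applied to powers of $|f|$ and combined with H\"older, the whole range $1<r<2$. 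Inequality (\ref{uv2}) is a subcritical Gagliardo--Nirenberg interpolation inequality, available on any complete manifold of $C^\infty$-bounded geometry (see Hebey \cite{Hebey}); on $\mathbb{H}^2$ it can moreover be recovered in several ranges by H\"older interpolation between $L^p$ and a Sobolev exponent, using the Poincar\'e inequality (\ref{uv1}). The step I expect to require the most care is the kernel estimate underlying (\ref{uv4}); related to it, and recurring throughout, is the elementary but necessary bookkeeping of passing between the homogeneous operator $(-\Delta)^{s/2}$ used here and the inhomogeneous Bessel potential $(1-\Delta)^{s/2}$, which is licensed precisely by the spectral gap that renders the two multipliers comparable.
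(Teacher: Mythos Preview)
Your proposal is correct; in fact it supplies considerably more detail than the paper itself, which does not give a proof of this lemma at all but simply cites references---Bray~\cite{Bray} for (\ref{uv2}), Ionescu--Pausader--Staffilani~\cite{IPS} for (\ref{uv3}), Hebey~\cite{Hebey} for (\ref{uv4}), and Lawrie--Oh--Shahshahani~\cite{LSO} as a general pointer. The arguments you sketch (spectral gap for (\ref{uv1}) and (\ref{uv5}); kernel or bounded-geometry localization for (\ref{uv4}); isoperimetric/Sobolev on a Cartan--Hadamard surface for (\ref{uv3}); Gagliardo--Nirenberg via bounded geometry for (\ref{uv2})) are precisely the standard ones underlying those citations, so your approach is aligned with the paper's and nothing further is required.
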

For the proof, we refer to Bray \cite{Bray} for (\ref{uv2}), Ionescu, Pausader, Staffilani \cite{IPS} for (\ref{uv3}), Hebey \cite{Hebey} for (\ref{uv4}), see also Lawrie, Oh, Shahshahani \cite{LSO}.

We also recall the standard diamagnetic inequality which sometimes refers to Kato's inequality as well.
\begin{lemma}\label{wusijue3}
If $T$ is some $(r,s)$ type tension or tension matrix defined on $\Bbb H^2$, then in the distribution sense, one has the diamagnetic inequality
$$
|\nabla|T||\le |\nabla T|.
$$
\end{lemma}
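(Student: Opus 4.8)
\emph{Proof proposal.} The assertion $|\nabla|T|| \le |\nabla T|$ is a purely local and pointwise comparison, so the hyperbolic geometry enters only through the single structural fact that the connection $\nabla$ on any tensor (or tensor‑matrix) bundle over $\mathbb{H}^2$ is compatible with the induced fiber metric. Accordingly the plan is to work in a fixed coordinate chart, assume $T$ has locally integrable distributional covariant derivative $\nabla T$ (so that $|\nabla T|\in L^1_{\mathrm{loc}}$; in all applications in this paper $T$ is a covariant derivative of a smooth map and this is automatic), and prove the statement in the form: $|T|\in W^{1,1}_{\mathrm{loc}}$ and $|\nabla|T||\le|\nabla T|$ almost everywhere. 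The scheme is the standard diamagnetic/Kato regularization combined with a fiberwise Cauchy--Schwarz inequality.

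First I would record the algebraic identity underlying everything. Writing $\langle\cdot,\cdot\rangle$ for the fiber inner product on the relevant bundle and $|\cdot|$ for the associated norm, metric compatibility of $\nabla$ gives $X(|T|^2)=2\langle\nabla_X T,T\rangle$ for every vector field $X$; equivalently, contracting $\nabla T$ with $T$ over the $r+s$ original indices and keeping the derivative slot free, one has the identity of $1$-forms $d(|T|^2)=2\langle\nabla T,T\rangle$. Next, for $\varepsilon>0$ set $T_\varepsilon:=(|T|^2+\varepsilon^2)^{1/2}$. Since $s\mapsto(s+\varepsilon^2)^{1/2}$ is Lipschitz and $|T|^2\in W^{1,1}_{\mathrm{loc}}$, the Stampacchia chain rule for compositions with Lipschitz functions gives $T_\varepsilon\in W^{1,1}_{\mathrm{loc}}$ with $\nabla T_\varepsilon=\langle\nabla T,T\rangle/(|T|^2+\varepsilon^2)^{1/2}$ in the distributional sense.

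Then I would apply Cauchy--Schwarz on each fiber: for almost every $x$ and each direction $X$ at $x$, $|\langle\nabla_X T,T\rangle|\le|\nabla_X T|\,|T|\le|\nabla T|\,|T|\,|X|$, so that the $1$-form $\langle\nabla T,T\rangle$ has pointwise norm at most $|\nabla T|\,|T|$, whence $|\nabla T_\varepsilon|(x)\le|T|(x)\,|\nabla T|(x)/(|T|^2+\varepsilon^2)^{1/2}(x)\le|\nabla T|(x)$ for a.e.\ $x$. Finally let $\varepsilon\to0$: from $|T|\le T_\varepsilon\le|T|+\varepsilon$ we get $T_\varepsilon\to|T|$ in $L^1_{\mathrm{loc}}$, while $\{\nabla T_\varepsilon\}$ is dominated by the fixed function $|\nabla T|\in L^1_{\mathrm{loc}}$, hence uniformly integrable; by Dunford--Pettis a subsequence of $\nabla T_\varepsilon$ converges weakly in $L^1_{\mathrm{loc}}$ to $\nabla|T|$. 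Testing against an arbitrary nonnegative $\varphi\in C_c^\infty(\mathbb{H}^2)$ and an arbitrary measurable $1$-form $\psi$ with $|\psi|\le1$, $\int\langle\nabla|T|,\psi\rangle\varphi=\lim\int\langle\nabla T_\varepsilon,\psi\rangle\varphi\le\liminf\int|\nabla T_\varepsilon|\varphi\le\int|\nabla T|\varphi$; taking the supremum over such $\psi$ yields $\int|\nabla|T||\,\varphi\le\int|\nabla T|\,\varphi$, and since $\varphi\ge0$ is arbitrary this is exactly $|\nabla|T||\le|\nabla T|$ in the distribution sense. The ``tensor matrix'' case is literally the same argument applied to a section of a finite direct sum of tensor bundles, again carrying a fiber inner product and a metric‑compatible connection.

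The step that requires the most care is the chain‑rule identity for $\nabla T_\varepsilon$ in the low‑regularity setting, i.e.\ checking that $|T|^2\in W^{1,1}_{\mathrm{loc}}$ with distributional derivative $2\langle\nabla T,T\rangle$ — this is where metric compatibility is used and where, if $T$ is only assumed to lie in $W^{1,1}_{\mathrm{loc}}$, one should first mollify $T$ in a coordinate chart, apply the identity for smooth fields, and pass to the limit before invoking the Lipschitz‑composition rule. Everything else — the fiberwise Cauchy--Schwarz and the $\varepsilon\to0$ weak‑compactness argument — is routine. If one is content to state the lemma for smooth $T$, which suffices for all uses in this paper, the chain rule is immediate and the only genuine point handled by the $\varepsilon$‑regularization is the behavior on the zero set $\{|T|=0\}$.
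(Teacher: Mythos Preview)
Your proof is correct and follows the standard $\varepsilon$-regularization route (metric compatibility $\Rightarrow$ $d|T|^2=2\langle\nabla T,T\rangle$, then Cauchy--Schwarz on $\nabla T_\varepsilon$ with $T_\varepsilon=(|T|^2+\varepsilon^2)^{1/2}$, then pass to the limit). The paper itself does not prove this lemma at all: it is merely \emph{recalled} as the ``standard diamagnetic inequality which sometimes refers to Kato's inequality as well,'' with no argument given. So there is nothing to compare against; you have supplied a complete proof where the paper only cites the result. One small remark: invoking Dunford--Pettis is heavier than necessary --- since $T_\varepsilon\to|T|$ in $L^1_{\mathrm{loc}}$ and the $\nabla T_\varepsilon$ are dominated by the fixed $L^1_{\mathrm{loc}}$ function $|\nabla T|$, one can pass to the limit directly in the distributional pairing using dominated convergence together with the a.e.\ pointwise convergence of $\nabla T_\varepsilon$ (to $\langle\nabla T,T\rangle/|T|$ on $\{|T|>0\}$ and to $0$ on $\{|T|=0\}$, the latter because $\nabla T=0$ a.e.\ on the level set of a Sobolev function). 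But your weak-compactness argument is perfectly valid as written.
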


\begin{remark}
Combining Lemma \ref{wusijue} and Lemma \ref{wusijue3}, we have several useful corollaries, for instance
\begin{align}\label{uv16}
\|f\|_{L_x^{\infty}}\le \|\nabla^2f\|_{L^2_x}, \mbox{  }{\left\| f \right\|_{{{\dot H}^{ - 1}_x}}} \le {\left\| f \right\|_{{L^2_x}}}.
\end{align}
\end{remark}

\subsection{Equivalence Lemma}
In this subsection, we prove the equivalence of the intrinsic Sobolev space and the extrinsic Sobolev space in some case.
Suppose that $Q$ is an admissible harmonic map in Definition 1.1.
Although the geodesic distance between $(0,0)$ and $(x_1,x_2)$ in the coordinate (\ref{vg}) is not $|x_1|+|x_2|$, we use the quantity $|x_1|+|x_2|$ in the following three lemmas for simplicity. It suffices to remind ourself that the true distance is bounded by a function of $(|x_1|,|x_2|)$.

As a preparation, we give the following lemma which shows the intrinsic formulation is equivalent to the extrinsic one if the image of the map is compact. \begin{lemma}\label{hbvcm7}
If $u:\Bbb H^2\to\Bbb H^2$ with $u(\Bbb H^2)$ covered by a geodesic ball of radius $R$, then for $d=1,2$
\begin{align}
 {\| {\nabla {u^d}}\|_{{L^2}}} &\le C(R){\| {du} \|_{{L^2}}} \label{miao1}\\
 {\| {{\nabla ^2}{u^d}} \|_{{L^2}}}&\le C(R){\| {\nabla du}\|_{{L^2}}} + C(R)\| {\nabla du}\|_{{L^2}}^2 \label{miao2}\\
 {\| {du}\|_{{L^2}}} &\le C(R){\| {\nabla {u^1}}\|_{{L^2}}} + C(R){\| {\nabla {u^2}}\|_{{L^2}}} \label{miao3}\\
 {\| {\nabla du}\|_{{L^2}}} &\le \sum\limits_{k = 1}^2 {C(R)} {\| {{\nabla^2}{u^k}}\|_{{L^2}}} + C(R){\| {{\nabla^2}{u^k}} \|^2_{{L^2}}}.\label{miao4}
\end{align}
\end{lemma}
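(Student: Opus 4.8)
The plan is to pass to the global Iwasawa coordinates (\ref{vg}) on both copies of $\Bbb H^2$, so that $u=(u^1,u^2)$ and the target metric has components $g_{11}(y)=e^{-2y_2}$, $g_{22}(y)=1$, $g_{12}=0$, with Christoffel symbols read off from (\ref{christ}) evaluated at $y=u$. First I would exploit the hypothesis: in these coordinates the metric is the upper half--plane metric and the geodesic distance along an $x_2$--fibre equals the coordinate difference, so the image of $u$ lying in a geodesic ball of radius $R$ forces $u^2$ to range over an interval of length at most $2R$. Hence there is a constant $C(R)\ge 1$ (depending also on the location of the ball, which in the applications is fixed) with
\begin{align}\label{pf-comp}
C(R)^{-1}\delta_{kl}\le g_{kl}(u)\le C(R)\,\delta_{kl},\qquad \sum_{k,m,n}\big|\overline{\Gamma}^{k}_{mn}(u)\big|\le C(R)
\end{align}
pointwise on $\Bbb H^2$. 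The first relation in (\ref{pf-comp}), together with $g_{12}=0$, yields the pointwise comparison $C(R)^{-1}\sum_{d}|\nabla u^d|^2\le |du|^2\le C(R)\sum_{d}|\nabla u^d|^2$, where $|\nabla u^d|^2=h^{ij}\partial_i u^d\partial_j u^d$ and $|du|^2=h^{ij}g_{kl}(u)\partial_i u^k\partial_j u^l$. Integrating this and using $|\nabla u^d|^2\le\sum_k|\nabla u^k|^2$ gives (\ref{miao1}); integrating and applying the triangle inequality in $L^2$ gives (\ref{miao3}).

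Next I would handle the second--order estimates (\ref{miao2}) and (\ref{miao4}) by comparing the full Hessian $\nabla du$ of the map with the scalar Hessians $\nabla^2 u^k$ of its coordinate components. From the formula for $\tau(u)$ in the introduction one has the pointwise identity
\begin{align}\label{pf-hess}
(\nabla du)^{k}_{ij}=(\nabla^2 u^k)_{ij}+\overline{\Gamma}^{k}_{mn}(u)\,\partial_i u^m\partial_j u^n,\qquad (\nabla^2 u^k)_{ij}=\partial_i\partial_j u^k-\Gamma^{l}_{ij}\partial_l u^k .
\end{align}
Estimating the norms of both sides using (\ref{pf-comp}) (to compare the norm of $\nabla du$ weighted by $g_{kl}(u)$ with $\sum_k|(\nabla du)^k|_h^2$, and to bound the Christoffel term by $C(R)\sum_m|\nabla u^m|^2\sim_R C(R)|du|^2$) gives the pointwise bounds
\begin{align}\label{pf-pt}
|\nabla^2 u^d|\le C(R)\,|\nabla du|+C(R)\,|du|^2,\qquad |\nabla du|\le C(R)\sum_{k}|\nabla^2 u^k|+C(R)\,|du|^2 .
\end{align}

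It then remains to absorb the quadratic term $\big\||du|^2\big\|_{L^2}=\big\|\,|du|\,\big\|_{L^4}^{2}$. By the diamagnetic inequality (Lemma \ref{wusijue3}) one has $|\nabla|du||\le|\nabla du|$ and $|\nabla|\nabla u^k||\le|\nabla^2 u^k|$ in the distributional sense, so the Gagliardo--Nirenberg inequality (\ref{uv2}) with $p=2$, $q=4$, $\theta=\tfrac12$, combined with the Poincar\'e inequality (\ref{uv1}), gives both
$$
\big\|\,|du|\,\big\|_{L^4}\lesssim \big\|\,|du|\,\big\|_{L^2}^{1/2}\big\|\nabla|du|\big\|_{L^2}^{1/2}\lesssim \|\nabla du\|_{L^2}
$$
and, using $|du|^2\sim_R\sum_d|\nabla u^d|^2$,
$$
\big\|\,|du|\,\big\|_{L^4}\le C(R)\sum_{k}\|\nabla u^k\|_{L^4}\lesssim C(R)\sum_{k}\|\nabla u^k\|_{L^2}^{1/2}\|\nabla^2 u^k\|_{L^2}^{1/2}\lesssim C(R)\sum_{k}\|\nabla^2 u^k\|_{L^2}.
$$
Substituting these two estimates into the integrated forms of the two inequalities in (\ref{pf-pt}) yields (\ref{miao2}) and (\ref{miao4}) respectively (for the latter, also absorbing $(\sum_k\|\nabla^2 u^k\|_{L^2})^2\le 2\sum_k\|\nabla^2 u^k\|_{L^2}^2$).

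\textbf{Main obstacle.} The computation is essentially routine; the one point that needs care is the passage from the coordinate--free hypothesis (image inside a geodesic ball of radius $R$) to the quantitative bounds (\ref{pf-comp}) on $e^{\pm 2u^2}$ and on $\overline{\Gamma}^{k}_{mn}(u)$ --- i.e. checking that in the Iwasawa chart the $x_2$--coordinate is a $1$--Lipschitz height function for the hyperbolic distance --- together with the minor technical point that the Sobolev and diamagnetic inequalities must be applied to the merely Lipschitz scalar $|du|$ (and $|\nabla u^k|$) rather than to the tensor $du$ itself.
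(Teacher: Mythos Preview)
Your proposal is correct and follows essentially the same route as the paper: pass to the Iwasawa coordinates, use the boundedness of $u^2$ to get the pointwise metric/Christoffel comparison (\ref{pf-comp}), derive the pointwise Hessian identity (\ref{pf-hess}) and the resulting bounds (\ref{pf-pt}), and then control the quadratic term $\||du|^2\|_{L^2}$ by Sobolev plus the diamagnetic inequality. The only cosmetic difference is that the paper cites the Sobolev embedding (\ref{uv3}) for the step $\|\sqrt{e(u)}\|_{L^4}\lesssim\|\nabla\sqrt{e(u)}\|_{L^2}$, whereas you obtain the same inequality from (\ref{uv2}) with $(p,q,\theta)=(2,4,\tfrac12)$ combined with the Poincar\'e inequality (\ref{uv1}); and you are slightly more explicit than the paper about the dependence of $C(R)$ on the location of the ball (the paper's convention, stated just before the lemma, is to interpret $R$ as a bound on $|u^1|+|u^2|$, which amounts to fixing the center at the Iwasawa origin).
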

\begin{proof}
With the coordinates (\ref{vg}), we have
\begin{align}
 {\left| {du} \right|^2} &= {h^{ii}}{\left| {{\partial _{{x_i}}}{u^j}} \right|^2}{g _{jj}} = {h^{ii}}{\left| {{\partial _{{x_i}}}{u^1}} \right|^2}{e^{ - 2{u^2}}} + {h^{ii}}{\left| {{\partial _{{x_i}}}{u^2}} \right|^2} \label{miao6}\\
 {\left| {\nabla du} \right|^2} &= {\left( {\frac{{{\partial ^2}{u^l}}}{{\partial {x_j}\partial {x_i}}} + {\partial _j}{u^k}{\partial _i}{u^m}\overline \Gamma  _{mk}^l - \Gamma _{ij}^k{\partial _k}{u^l}} \right)^2}{g_{ll}}{h^{ii}}{h^{jj}} \label{miao7}\\
 {\left| {\nabla {u^j}} \right|^2} &= {\left| {{\partial _{{x_i}}}u{}^j} \right|^2}{h^{ii}} \label{miao8}\\
 {\left| {{\nabla ^2}{u^l}} \right|^2} &= {\left| {\frac{{{\partial ^2}{u^l}}}{{\partial {x_j}\partial {x_i}}} - \Gamma _{ij}^k{\partial _k}{u^l}} \right|^2}{h^{ii}}{h^{jj}}.\label{miao9}
\end{align}
Thus (\ref{miao1}) and (\ref{miao3}) directly follow by (\ref{miao8}) and (\ref{miao6}) with $C(R)=e^{2R}$.
Using the explicit formula for $\Gamma^{k}_{ij}$, (\ref{miao7}) and (\ref{miao9}) give for $C(R)=e^{8R}$,
\begin{align*}
{\left| {{\nabla ^2}{u^d}} \right|^2} &\le {\left| {\nabla du} \right|^2} + {\left| {{\partial _j}{u^k}{\partial _i}{u^m}\overline \Gamma  _{mk}^l} \right|^2}{g_{ll}}{h^{ii}}{h^{jj}} \\
&\lesssim {\left| {\nabla du} \right|^2} + C(R){\left| {e(u)} \right|^2}.
\end{align*}
Then Sobolev embedding (\ref{uv3}) and diamagnetic inequality show
\begin{align*}
\int_{{\mathbb{H}^2}} {{{\left| {{\nabla ^2}{u^d}} \right|}^2}}{\rm{dvol_h}}&\le C(R)\int_{{\mathbb{H}^2}} {{{\left| {\nabla du} \right|}^2}}{\rm{dvol_h}} + C(R)\int_{{\Bbb H^2}} {{{\left| {e(u)} \right|}^2}}{\rm{dvol_h}} \\
&\le C(R)\int_{{\mathbb{H}^2}} {{{\left| {\nabla du} \right|}^2}}{\rm{dvol_h}} +C(R) {\left[ {\int_{{\Bbb H^2}} {{{\left| {\nabla \sqrt {e(u)} } \right|}^2}}{\rm{dvol_h}}} \right]^2} \\
&\le C(R) \int_{{\mathbb{H}^2}} {{{\left| {\nabla du} \right|}^2}}{\rm{dvol_h}}+C(R){\left[\int_{{\mathbb{H}^2}} {{{\left| {\nabla du} \right|}^2}}{\rm{dvol_h}}\right]}^2.
\end{align*}
Hence (\ref{miao2}) is obtained.
Similarly we have for $C(R)=e^{8R}$
\begin{align*}
 {\left| {\nabla du} \right|^2} &\le {g_{ll}}{\left| {{\nabla ^2}{u^l}} \right|^2} + {\left| {{\partial _j}{u^k}{\partial _i}{u^m}\bar \Gamma _{mk}^l} \right|^2}{g_{ll}}{h^{ii}}{h^{jj}} \\
 &\le C(R){\left| {{\nabla ^2}{u^l}} \right|^2} + C(R){\left| {\nabla {u^k}} \right|^4}.
 \end{align*}
Using Sobolev embedding (\ref{uv3}) again yields (\ref{miao4}).
\end{proof}

\begin{lemma}\label{new}
Let $Q$ be an admissible harmonic map with $Q(\Bbb H^2)$ contained in a geodesic ball of $N=\Bbb H^2$ with radius $R_0$. If $u\in \mathcal{H}^3$ then when $k=2,3$
\begin{align}
\|u\|_{\mathcal{H}^k}\thicksim\|u\|_{\mathfrak{H}^k},
\end{align}
in the sense that there exist continuous functions $\mathcal{Q}$ and $\mathcal{P}$ such that
\begin{align}
\|u\|_{\mathcal{H}^k}&\le C(\|u\|_{\mathfrak{H}^2},\|\nabla dQ\|_{L^2},R_0)\mathcal{P}(\|u\|_{\mathfrak{H}^k})\label{jia81}\\
\|u\|_{\mathfrak{H}^k}&\le C(\|u\|_{\mathcal{H}^2},R_0)\mathcal{Q}(\|u\|_{\mathcal{H}^k})\label{jia91}.
\end{align}
\end{lemma}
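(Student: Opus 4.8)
The plan is to prove the two inequalities \eqref{jia81} and \eqref{jia91} by bootstrapping Lemma \ref{hbvcm7}, the difference being that here the image of $u$ is not a priori known to be bounded, so the first task is to produce such a bound. First I would observe that by the definition of $\mathcal{H}^3$ we have $u^1 - Q^1, u^2 - Q^2 \in H^3(\mathbb{H}^2;\mathbb{R})$, and since $Q$ is admissible its image is contained in a geodesic ball of radius $R_0$; by the Sobolev embedding $\|f\|_{L^\infty}\lesssim \|\nabla^2 f\|_{L^2}$ from \eqref{uv16}, the functions $u^d - Q^d$ are bounded in $L^\infty$ by $C(\|u\|_{\mathcal H^2})$. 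Translating back through the coordinate map $\Psi$ of \eqref{vg} (which is locally bi-Lipschitz with constants controlled by the size of $x_2$), this shows $u(\mathbb{H}^2)$ lies in a geodesic ball of radius $R = R(\|u\|_{\mathcal H^2}, R_0)$ in $N=\mathbb{H}^2$. This puts us in a position to invoke Lemma \ref{hbvcm7}.

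Given the radius bound $R$, the estimate \eqref{jia91} for $k=2,3$ follows directly: by \eqref{miao1}--\eqref{miao2} (for $k=2$) and the analogous third-order bound obtained by differentiating \eqref{miao7} once more and using \eqref{uv3} to absorb the lower-order quadratic and cubic terms, we get $\|\nabla^{k-1} du\|_{L^2} \le C(R)\,\mathcal{Q}(\|u\|_{\mathcal H^k})$, where the polynomial-type dependence comes from the Gagliardo--Nirenberg interpolation used to control the curvature-type terms $\partial u\,\partial u\,\overline\Gamma$ and their derivatives. For \eqref{jia81} one argues in the reverse direction: from \eqref{miao3}--\eqref{miao4} we bound $\|\nabla u^d\|_{L^2}$ and $\|\nabla^2 u^d\|_{L^2}$ by $C(R)$ times polynomial expressions in $\|u\|_{\mathfrak H^k}$; to promote $H^2$ control of $u^d$ to the norm $\|u\|_{\mathcal H^k} = \sum_j \|u^j - Q^j\|_{H^k}$ one then subtracts $Q^d$ (which contributes $\|\nabla dQ\|_{L^2}$ and, via admissibility, $\|\nabla^2 dQ\|_{L^2}$, both finite) and uses that the zeroth-order term $\|u^d - Q^d\|_{L^2}$ is controlled by $\|\nabla(u^d-Q^d)\|_{L^2}$ through \eqref{uv1}.

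The one genuinely delicate point — and the main obstacle — is the third-derivative bookkeeping. When $k=3$ the identity \eqref{miao7} must be differentiated, producing terms of schematic form $\nabla^2 du$, $\nabla du \cdot du \cdot \overline\Gamma$, $du^3 \cdot \overline\Gamma'$, and $du \cdot \nabla^2 u$; each of the cubic and quartic terms has to be placed in $L^2$ using only $\|u\|_{\mathfrak H^3}$ (respectively $\|u\|_{\mathcal H^3}$), which forces a careful choice of Lebesgue exponents: one factor in $L^2$, the remaining factors in $L^\infty$ or $L^4$ via \eqref{uv3} and \eqref{uv16}, with the curvature symbols $\overline\Gamma$ and their derivatives bounded pointwise by $C(R)$ because the image lies in a fixed compact ball. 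The outcome is that the constants depend on $R$ and hence on $\|u\|_{\mathcal H^2}$ (not merely on $R_0$), and the dependence on the top-order norm is through a fixed polynomial $\mathcal P$ or $\mathcal Q$ — exactly the form claimed. I would carry the $k=2$ case out fully first, since it is needed to establish the $L^\infty$-image bound, and then run the $k=3$ case as a perturbation of it, treating the extra derivative by the interpolation scheme just described. Throughout, the equivalence is only claimed for $k=2,3$ precisely because the embedding \eqref{uv4}/\eqref{uv16} giving the $L^\infty$ bound needs two derivatives, so there is no circularity.
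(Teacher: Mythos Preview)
Your approach to \eqref{jia91} is essentially the paper's, but there is a genuine gap in your treatment of \eqref{jia81}: the constant you produce depends on $\|u\|_{\mathcal H^2}$, whereas the statement requires dependence on the \emph{intrinsic} norm $\|u\|_{\mathfrak H^2}$ (together with $\|\nabla dQ\|_{L^2}$ and $R_0$). You obtain the image-radius bound $R$ by applying Sobolev embedding to $u^d-Q^d$, which already uses the extrinsic $\mathcal H^2$ norm; the $C(R)$ from Lemma~\ref{hbvcm7} then carries this extrinsic dependence through, and you explicitly note this (``the constants depend on $R$ and hence on $\|u\|_{\mathcal H^2}$''). That is not what \eqref{jia81} asserts, and the stronger form matters: it is precisely what yields Remark~\ref{jiujin}, which is invoked later in the paper.

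The missing idea is to treat the two components of $u$ in a specific order. In the Iwasawa coordinates the metric is $g_{11}=e^{-2y_2}$, $g_{22}=1$, and the only nontrivial Christoffel symbols are $\overline\Gamma^1_{12}=-1$, $\overline\Gamma^2_{11}=e^{-2y_2}$. A direct computation shows that for the \emph{second} component the curvature-type remainder in \eqref{miao7} satisfies
\[
h^{ii}h^{jj}g_{ll}\bigl|\partial_j u^k\,\partial_i u^m\,\overline\Gamma^{\,l}_{mk}\bigr|^2
\;\lesssim\; |e(u)|^2
\]
with an absolute constant, because every factor of $e^{-2u^2}$ coming from $\overline\Gamma$ or $g_{ll}$ is exactly matched by the $e^{-2u^2}$ weight already present in $|du|^2$. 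Hence $\|\nabla^2 u^2\|_{L^2}$ is bounded by $\|\nabla du\|_{L^2}+\|\nabla du\|_{L^2}^2$ with no a priori image bound, and then $\|u^2-Q^2\|_{L^\infty}$ is controlled by $\|u\|_{\mathfrak H^2}$ and $Q$-data alone. Only after this step are the target Christoffel symbols and metric coefficients bounded, and the argument for $u^1$ (and subsequently the $k=3$ case) proceeds as you outline. Without this $u^2$-first bootstrap, your argument is circular for \eqref{jia81}.

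A minor point: your citations of \eqref{miao1}--\eqref{miao2} and \eqref{miao3}--\eqref{miao4} are interchanged; the first pair bounds extrinsic by intrinsic (relevant to \eqref{jia81}), the second pair the reverse (relevant to \eqref{jia91}).
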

\begin{proof}
We first prove (\ref{jia81}).
By (\ref{miao6})-(\ref{miao9}),
\begin{align*}
 {\left| {{\nabla ^2}{u^2}} \right|^2} &\le {\left| {\nabla du} \right|^2} + {\left| {{\partial _j}{u^k}{\partial _i}{u^m}\overline \Gamma  _{mk}^l} \right|^2}{g_{ll}}{h^{ii}}{h^{jj}} \\
 &= {\left| {\nabla du} \right|^2} + 2{\left| {{\partial _j}{u^1}{\partial _i}{u^2}\overline \Gamma  _{21}^1} \right|^2}{e^{ - 2{u^2}}}{h^{ii}}{h^{jj}} + {\left| {{\partial _j}{u^1}{\partial _i}{u^1}\overline \Gamma  _{11}^2} \right|^2}{h^{ii}}{h^{jj}} \\
 &\lesssim  {\left| {\nabla du} \right|^2} + 2{\left| {{\partial _j}{u^1}{\partial _i}{u^2}} \right|^2}{e^{ - 2{u^2}}}{h^{ii}}{h^{jj}} + {\left| {{\partial _j}{u^1}{\partial _i}{u^1}} \right|^2}{e^{ - 4{u^2}}}{h^{ii}}{h^{jj}} \\
 &\lesssim {\left| {\nabla du} \right|^2} + {\left| {e(u)} \right|^2}.
\end{align*}
Then Sobolev embedding (\ref{uv3}) and diamagnetic inequality show
\begin{align}
\int_{{\mathbb{H}^2}} {{{\left| {{\nabla ^2}{u^2}} \right|}^2}}{\rm{dvol_h}}&\lesssim \int_{{\mathbb{H}^2}} {{{\left| {\nabla du} \right|}^2}}{\rm{dvol_h}} + \int_{{\Bbb H^2}} {{{\left| {e(u)} \right|}^2}}{\rm{dvol_h}} \nonumber\\
&\lesssim \int_{{\mathbb{H}^2}} {{{\left| {\nabla du} \right|}^2}}{\rm{dvol_h}} + {\left[ {\int_{{\Bbb H^2}} {{{\left| {\nabla \sqrt {e(u)} } \right|}^2}}{\rm{dvol_h}}} \right]^2} \nonumber\\
&\lesssim \int_{{\mathbb{H}^2}} {{{\left| {\nabla du} \right|}^2}}{\rm{dvol_h}}+{\left[\int_{{\mathbb{H}^2}} {{{\left| {\nabla du} \right|}^2}}{\rm{dvol_h}}\right]}^2.\label{hundao}
\end{align}
Recall the definition in (\ref{h897}), then (\ref{hundao}), (\ref{uv16}) with $Q\in \mathfrak{H}^2$ yield
\begin{align*}
&\|\nabla^2(u^2-Q^2)\|_{L^2}\le\|\nabla^2u^2\|_{L^2}+\|\nabla^2Q^2\|_{L^2}\\
&\lesssim\|\nabla dQ\|_{L^2}+\|\nabla dQ\|^2_{L^2}+\|\nabla du\|_{L^2}+\|\nabla du\|^2_{L^2}.
\end{align*}
Then by Sobolev embedding,
$$\|u^2-Q^2\|_{L^{\infty}}\lesssim\|\nabla^2(u^2-Q^2)\|_{L^2}\lesssim \sum^2_{k=1}\|\nabla dQ\|^k_{L^2}+\|\nabla du\|^k_{L^2}.
$$
Thus $\overline{\Gamma}^j_{ik}$ and $g_{ik}$ are bounded by the compactness of $Q(\Bbb H^2)$ and (\ref{christ}). Applying similar arguments to $u^1$ with the boundedness of $\overline{\Gamma}^j_{ik}$ and $g_{ij}$ implies
\begin{align}\label{hudfrtyu}
\|\nabla^2(u^1-Q^1)\|_{L^2}\lesssim\sum^2_{k=1}\|\nabla dQ\|^k_{L^2}+\|\nabla du\|^k_{L^2}.
\end{align}
where the implicit constant has an up bound $e^{8\big(\sum^2_{k=1}\|\nabla dQ\|^k_{L^2}+\|\nabla du\|^k_{L^2}+R_0\big)}$.
Thus the $k=2$ case in (\ref{jia81}) is proved. \\
By (\ref{hudfrtyu}) and Sobolev embedding, one has $\|u^1-Q^1\|_{\infty}<\infty$. Then the compactness of $Q(\Bbb H^2)$ indicates $u(\Bbb H^2)$ is covered by a geodesic ball of radius $CR_0+C\sum^2_{k=1}\|\nabla dQ\|^k_{L^2}+\|\nabla du\|^k_{L^2}$, where $C$ is some universal constant.
Therefore careful calculations give
\begin{align}
 {h^{ii}}{h^{jj}}{\left( {\Gamma _{ij}^k{\partial _k}{u^l}} \right)^2} &\lesssim {\left| {du} \right|^2} \label{1.1}\\
 {h^{ii}}{h^{jj}}{\left( {{\partial _j}{u^k}{\partial _i}{u^m}\overline \Gamma  _{mk}^l} \right)^2} &\lesssim{\left| {du} \right|^4} \label{1.2}\\
 {\left( {\frac{{{\partial ^2}{u^l}}}{{\partial {x_j}\partial {x_i}}}} \right)^2}{h^{ii}}{h^{jj}} &\lesssim{\left| {du} \right|^2} + {\left| {\nabla du} \right|^2} + {\left| {du} \right|^4}.\label{1.3}
\end{align}
The third order derivatives can be written as
\begin{align*}
 &{\left| {{\nabla ^3}{u^l}} \right|^2}\nonumber\\
 &= {h^{ii}}{h^{jj}}{h^{kk}}{\left| {\frac{{{\partial ^3}{u^l}}}{{\partial {x_j}\partial {x_i}\partial {x_k}}} - {\partial _k}\left( {\Gamma _{ij}^m{\partial _m}{u^l}} \right) - \Gamma _{ik}^m\left( {\frac{{{\partial ^2}{u^l}}}{{\partial {x_m}\partial {x_j}}} - \Gamma _{mj}^p{\partial _p}{u^l}} \right) - \Gamma _{jk}^m\left( {\frac{{{\partial ^2}{u^l}}}{{\partial {x_m}\partial {x_i}}} - \Gamma _{mi}^p{\partial _p}{u^l}} \right)} \right|^2},
 \end{align*}
 and
 \begin{align*}
 &{\left| {{\nabla ^2}du} \right|^2} \\
 &= {h^{ii}}{h^{jj}}{h^{kk}}{g_{ll}}\left\{ {\frac{{{\partial ^3}{u^l}}}{{\partial {x_j}\partial {x_i}\partial {x_k}}}} \right. - {\partial _k}\left( {\Gamma _{ij}^m(u){\partial _m}{u^l}} \right) - \Gamma _{ki}^p\left( {\frac{{{\partial ^2}{u^l}}}{{\partial {x_j}\partial {x_p}}} - \Gamma _{pj}^q{\partial _q}{u^l}} \right) - \Gamma _{k,j}^p\left( {\frac{{{\partial ^2}{u^l}}}{{\partial {x_i}\partial {x_p}}} - \Gamma _{pi}^q{\partial _q}{u^l}} \right) \\
 &+ {\partial _k}\left( {{\partial _j}{u^q}{\partial _i}{u^m}\overline \Gamma  _{mq}^l} \right) + \Gamma _{kq}^l\left( {\frac{{{\partial ^2}{u^q}}}{{\partial {x_j}\partial {x_i}}} + \left( {{\partial _j}{u^k}{\partial _i}{u^m}\overline \Gamma  _{mk}^q - \Gamma _{ij}^m(u){\partial _m}{u^q}} \right)} \right) \\
 &- \Gamma _{ki}^p{\partial _j}{u^q}{\partial _p}{u^m}\overline \Gamma  _{mq}^l\left. { - \Gamma _{k,j}^p{\partial _i}{u^q}{\partial _p}{u^m}\overline \Gamma  _{mq}^l} \right\}^2.
 \end{align*}
Then it suffices to bound
\begin{align}
&\int_{{\mathbb{H}^2}} {{h^{ii}}} {h^{jj}}{h^{kk}}{g_{ll}}\left\{ {{\partial _k}\left( {{\partial _j}{u^q}{\partial _i}{u^m}\overline \Gamma  _{mq}^l} \right)} \right. + \Gamma _{kq}^l\left( {\frac{{{\partial ^2}{u^q}}}{{\partial {x_j}\partial {x_i}}} + {\partial _j}{u^k}{\partial _i}{u^m}\overline \Gamma  _{mk}^q - \Gamma _{ij}^m(u){\partial _m}{u^q}} \right) \nonumber\\
&- \Gamma _{ki}^p{\partial _j}{u^q}{\partial _p}{u^m}\overline \Gamma  _{mq}^l{\left. { - \Gamma _{k,j}^p{\partial _i}{u^q}{\partial _p}{u^m}\overline \Gamma  _{mq}^l} \right\}^2}{\rm{dvol_h}} \label{zxcv}
\end{align}
by $\|u\|_{{\mathfrak{H}}^3}$.
Meanwhile (\ref{1.1}), (\ref{1.2}), (\ref{1.3}) imply
\begin{align*}
 {h^{ii}}{h^{jj}}{h^{kk}}{g_{ll}}\left( {\frac{{{\partial ^2}{u^m}}}{{\partial {x_k}\partial {x_i}}}{\partial _j}{u^q}\overline \Gamma  _{mq}^l} \right)^2 &\lesssim {\left| {du} \right|^4} + {\left| {\nabla du} \right|^4}, \\
 {h^{ii}}{h^{jj}}{h^{kk}}{g_{ll}}\left( {\frac{{{\partial ^2}{u^q}}}{{\partial {x_k}\partial {x_j}}}{\partial _i}{u^m}\overline \Gamma  _{mq}^l} \right)^2 &\lesssim {\left| {du} \right|^4} + {\left| {\nabla du} \right|^4}, \\
 {h^{ii}}{h^{jj}}{h^{kk}}{g_{ll}}{\left( {{\partial _j}{u^q}{\partial _i}{u^m}{\partial _k}\overline \Gamma  _{mq}^l} \right)^2} &\lesssim {\left| {du} \right|^6}, \\
 {h^{ii}}{h^{jj}}{h^{kk}}{g_{ll}}{\left( {\Gamma _{k,j}^p{\partial _i}{u^q}{\partial _p}{u^m}\overline \Gamma  _{mq}^l} \right)^2} &\lesssim {\left| {du} \right|^4}.
 \end{align*}
Thus (\ref{jia81}) follows from Sobolev embedding.
The inverse direction of (\ref{jia81}), i.e., (\ref{jia91}) is obtained along the same path. Indeed, Sobolev embedding yields for $l=1,2$
\begin{align}
\|u^l-Q^l\|_{L^{\infty}}\lesssim \sum^2_{k=1}\|u^k-Q^k\|_{H^{2}}.
\end{align}
Thus one has for $l=1,2$
$$\|u^l\|_{L^{\infty}}\lesssim R_0+\|u\|_{\mathcal{H}^2}.
$$
Then the $k=2$ case of (\ref{jia91}) follows by Lemma \ref{hbvcm7} with $R=R_0+C\|u\|_{\mathcal{H}^{2}}$. Further calculations give the $k=3$ case of (\ref{jia91}).
\end{proof}

\begin{remark}\label{jiujin}
As a byproduct of the proof in Lemma \ref{new}, we have for some continuous function $\mathcal{P}:\Bbb R\to \Bbb R$ depending only on $Q$ such that for any $u\in \mathcal{H}^2_{Q}$
$$
\|(u^1,u^2)\|_{L^{\infty}}\lesssim \mathcal{P}(\|u\|_{\mathfrak{H}^2}).
$$
\end{remark}

\subsection{Smoothing effects for heat equations on hyperbolic spaces}

We also need a smoothing effect lemma for the semigroup $e^{z\Delta_{\Bbb H^2}}$ in Section 5.
\begin{lemma}\label{anxin}
Let $z\in \Bbb C$ with $\Re z=\alpha>0$. If $f\in L^{\infty}([0,T];H^2)$ with initial data $f_0$ is a solution to the linear inhomogeneous equation
\begin{align}\label{ch78}
{\partial _t}f - z{\Delta _{{\Bbb H^2}}}f = g,
\end{align}
then we have
$$\|f\|_{L^2_x}\lesssim \|f_0\|_{L^2_x}+ \|g\|_{L^2_t{\dot{H}}^{-1}_x}.$$
\end{lemma}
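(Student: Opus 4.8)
The plan is to run an energy estimate on $f$ in $L^2_x$, but perform the duality/integration-by-parts carefully so that the inhomogeneity $g$ is only ever tested against an $\dot H^1_x$ function. First I would multiply \eqref{ch78} by $\bar f$, integrate over $\mathbb{H}^2$, and take real parts. Since $\Re z=\alpha>0$ and $\Re\langle z\Delta_{\mathbb{H}^2}f,f\rangle = -\alpha\|\nabla f\|_{L^2_x}^2 \le -c\|f\|_{L^2_x}^2$ by the spectral gap \eqref{uv1} (equivalently $-\Delta_{\mathbb{H}^2}\ge\tfrac14$ on $\mathbb{H}^2$), one obtains
\begin{align*}
\tfrac12\tfrac{d}{dt}\|f\|_{L^2_x}^2 + \alpha\|\nabla f\|_{L^2_x}^2 = \Re\langle g,f\rangle_{L^2_x} \le \|g\|_{\dot H^{-1}_x}\|\nabla f\|_{L^2_x},
\end{align*}
where in the last step I pair $g$ against $f$ by the $\dot H^{-1}_x$--$\dot H^1_x$ duality, legitimate since $f\in L^\infty_tH^2\subset L^\infty_t\dot H^1_x$ and \eqref{uv5} identifies $\|\nabla f\|_{L^2_x}$ with $\|(-\Delta)^{1/2}f\|_{L^2_x}$.

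Next I would absorb: by Young's inequality $\|g\|_{\dot H^{-1}_x}\|\nabla f\|_{L^2_x}\le \tfrac{\alpha}{2}\|\nabla f\|_{L^2_x}^2 + \tfrac{1}{2\alpha}\|g\|_{\dot H^{-1}_x}^2$, so that
\begin{align*}
\tfrac{d}{dt}\|f\|_{L^2_x}^2 + \alpha\|\nabla f\|_{L^2_x}^2 \le \tfrac{1}{\alpha}\|g\|_{\dot H^{-1}_x}^2.
\end{align*}
Integrating in $t$ from $0$ and using $\|\nabla f\|_{L^2_x}^2\gtrsim\|f\|_{L^2_x}^2$ again (or simply discarding the gradient term and taking a supremum in $t$) gives
\begin{align*}
\sup_{t\in[0,T]}\|f(t)\|_{L^2_x}^2 \lesssim \|f_0\|_{L^2_x}^2 + \tfrac{1}{\alpha}\int_0^T\|g(t)\|_{\dot H^{-1}_x}^2\,dt = \|f_0\|_{L^2_x}^2 + \tfrac{1}{\alpha}\|g\|_{L^2_t\dot H^{-1}_x}^2,
\end{align*}
which is the claimed bound after taking square roots (with the implicit constant depending on $\alpha=\Re z$). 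One technical point is that $z$ is complex, not just positive real: the imaginary part $\Im z$ contributes a purely imaginary term $i\Im z\langle\Delta f,f\rangle = -i\Im z\|\nabla f\|_{L^2_x}^2$ to $\langle z\Delta f,f\rangle$, which drops out upon taking real parts, so the Schrödinger-type component is harmless for this $L^2$ estimate.

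The only real subtlety — and the step I would be most careful about — is justifying the manipulations rigorously rather than formally: that $t\mapsto\|f(t)\|_{L^2_x}^2$ is absolutely continuous with the stated derivative, and that the pairing $\langle g,f\rangle$ makes sense and equals the boundary term. Both follow from the regularity hypothesis $f\in L^\infty([0,T];H^2)$ together with $\partial_t f = z\Delta f + g \in L^\infty_tL^2_x + L^2_t\dot H^{-1}_x$, via a standard approximation/mollification argument (mollify in $x$ by a radial Fourier multiplier, run the estimate, pass to the limit) or by invoking the Lions–Magenes lemma on $\tfrac{d}{dt}\|f\|^2 = 2\Re\langle\partial_t f,f\rangle$ for $f\in L^2_t\dot H^1_x$ with $\partial_t f\in L^2_t\dot H^{-1}_x$; the $\dot H^{-1}_x$–$\dot H^1_x$ duality on $\mathbb{H}^2$ is exactly set up by the Plancherel theorem and the definition of $(-\Delta)^{s/2}$ recalled in Section 2.2. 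Everything else is the routine Grönwall/absorption computation above.
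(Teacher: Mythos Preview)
Your proof is correct and follows essentially the same route as the paper's own argument: take the $L^2_x$ inner product of the equation with $f$, extract the real part to get $\tfrac12\tfrac{d}{dt}\|f\|_{L^2_x}^2 + \alpha\|\nabla f\|_{L^2_x}^2 = \Re\langle g,f\rangle$, bound the right-hand side by $\|(-\Delta)^{-1/2}g\|_{L^2_x}\|(-\Delta)^{1/2}f\|_{L^2_x}$ via \eqref{uv5}, and absorb. Your write-up is in fact more careful than the paper's (you spell out the Young absorption, the vanishing of the $\Im z$ contribution, and the regularity needed to justify $\tfrac{d}{dt}\|f\|^2$), but the underlying idea is identical.
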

\begin{proof}
Taking inner product with $f$ on both sides of (\ref{ch78}), the real part yields
\begin{align}
\frac{d}{{dt}}\left\| f \right\|_{{L_x}}^2 - \alpha \left\| {\nabla f} \right\|_{{L_x}}^2 =\Re \left\langle {g,f} \right\rangle.
\end{align}
Then Lemma \ref{anxin} follows by (\ref{uv5}) and
\begin{align*}
\left| {\Re\left\langle {g,f} \right\rangle } \right| \le {\left\| {{{\left( { - \Delta } \right)}^{\frac{1}{2}}}f} \right\|_{L_x^2}}{\left\| {{{\left( { - \Delta } \right)}^{ - \frac{1}{2}}}g} \right\|_{L_x^2}}.
\end{align*}
\end{proof}

The pointwise estimate of the heat kernel in $\mathbb{H}^2$ is obtained by Davies and Mandouvalos \cite{DM}.
\begin{lemma}\label{8.5}
The heat kernel on $\Bbb H^2$ denoted by $K_2(t,\rho)$ satisfies the pointwise estimate
$$
K_2(t,\rho)\sim t^{-1}e^{-\frac{1}{4}t}e^{-\frac{\rho^2}{4t}}e^{-\frac{1}{2}\rho}(1+\rho+t)^{-\frac{1}{2}}(1+\rho).
$$
Particularly, we have the decay estimate
\begin{align}
\|e^{s\Delta_{\Bbb H^2}}f\|_{L^{\infty}_x}&\lesssim e^{-\frac{s}{4}}s^{-1}\|f\|_{L^{1}_x}\label{huhu899}\\
\|e^{s\Delta_{\Bbb H^2}}f\|_{L^p_x}&\lesssim s^{\frac{1}{p}-\frac{1}{r}}\|f\|_{L^{r}_x},\label{huhu89}
\end{align}
where $1\le r\le p\le\infty$.
\end{lemma}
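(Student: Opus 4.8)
The statement has two logically separate parts: the two‑sided pointwise bound on the heat kernel $K_2(t,\rho)$, which is the substantial input, and the decay estimates $(\ref{huhu899})$–$(\ref{huhu89})$, which follow from (the upper half of) that bound by soft arguments. The plan is to recall the integral representation of $K_2$, indicate how the Davies–Mandouvalos estimation produces the claimed asymptotic, then take the supremum of the kernel for $(\ref{huhu899})$ and interpolate endpoint bounds for $(\ref{huhu89})$.

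\textbf{The pointwise bound.} Since $e^{t\Delta_{\Bbb H^2}}$ is convolution against a radial kernel, $K_2(t,\rho)$ (with $\rho$ the geodesic distance) solves $\partial_tK_2=\partial_\rho^2K_2+\coth\rho\,\partial_\rho K_2$ with $K_2(0^+,\cdot)=\delta_0$ and admits the classical representation
\[
K_2(t,\rho)=\frac{C\,e^{-t/4}}{t^{3/2}}\int_\rho^\infty\frac{b\,e^{-b^2/(4t)}}{\sqrt{\cosh b-\cosh\rho}}\,db,
\]
obtained from the explicit odd–dimensional hyperbolic heat kernels by a classical subordination/ascent formula. The prefactor already carries the spectral–gap decay $e^{-t/4}$ (since $\inf\,\mathrm{spec}(-\Delta_{\Bbb H^2})=1/4$), and the substitution $b=\rho+u$ extracts the Gaussian $e^{-\rho^2/(4t)}$. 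For the remaining integral I would use $\cosh(\rho+u)-\cosh\rho=2\sinh(\rho+\tfrac u2)\sinh\tfrac u2$ together with the elementary comparison $\sinh(\rho+\tfrac u2)\sinh\tfrac u2\sim u(1+\rho)e^{\rho}$ for $0<u\le1$ and the monotone growth of $\cosh$ for $u\ge1$, splitting the $u$–integral at $u=1$ and at $u\sim t/(1+\rho)$ (the scale at which the factor $e^{-(2\rho u+u^2)/(4t)}$ turns on its decay); a routine Laplace–type estimate of each piece produces the correction factor $e^{-\rho/2}(1+\rho)(1+\rho+t)^{-1/2}$, which is exactly the contribution of the square–root singularity of the weight at $b=\rho$ combined with the exponential volume growth. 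This estimation is carried out, for all $\Bbb H^n$, in Davies–Mandouvalos \cite{DM}, to which I would refer for the details.

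\textbf{The two decay estimates.} For $(\ref{huhu899})$, writing $e^{s\Delta_{\Bbb H^2}}f(x)=\int_{\Bbb H^2}K_2(s,d_{\Bbb H^2}(x,y))f(y)\,{\rm{dvol_h}}(y)$ gives $\|e^{s\Delta_{\Bbb H^2}}f\|_{L^\infty_x}\le(\sup_{\rho\ge0}K_2(s,\rho))\|f\|_{L^1_x}$, and from the upper half of the previous step the $\rho$–dependent factors $e^{-\rho^2/(4s)}(1+\rho)(1+\rho+s)^{-1/2}e^{-\rho/2}$ are bounded uniformly in $\rho\ge0$ and $s>0$ (use $e^{-\rho^2/(4s)}\le1$, $(1+\rho+s)^{-1/2}\le1$, $\sup_\rho(1+\rho)e^{-\rho/2}<\infty$), hence $\sup_\rho K_2(s,\rho)\lesssim s^{-1}e^{-s/4}$. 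For $(\ref{huhu89})$ I would interpolate three endpoint bounds rather than compute $L^q$–norms of the kernel: $\|e^{s\Delta_{\Bbb H^2}}\|_{L^1\to L^\infty}\lesssim s^{-1}$ (drop $e^{-s/4}\le1$ above), and $\|e^{s\Delta_{\Bbb H^2}}\|_{L^1\to L^1}\le1$, $\|e^{s\Delta_{\Bbb H^2}}\|_{L^\infty\to L^\infty}\le1$, the last two because the heat semigroup on the stochastically complete manifold $\Bbb H^2$ is Markov (positive, symmetric, unit–mass kernel). Riesz–Thorin interpolation between $L^1\to L^1$ and $L^1\to L^\infty$ gives $\|e^{s\Delta_{\Bbb H^2}}\|_{L^1\to L^a}\lesssim s^{1/a-1}$ for every $a\in[1,\infty]$; interpolating once more between $L^1\to L^{p/r}$ (norm $\lesssim s^{r/p-1}$) and $L^\infty\to L^\infty$, with parameter $\theta=1-1/r$, yields $\|e^{s\Delta_{\Bbb H^2}}\|_{L^r\to L^p}\lesssim(s^{r/p-1})^{1/r}=s^{1/p-1/r}$, which is $(\ref{huhu89})$. (Equivalently one verifies $\|K_2(s,\cdot)\|_{L^{q'}(\Bbb H^2)}\lesssim s^{1/p-1/r}$ with $1/q'=1+1/p-1/r$ by integrating the pointwise bound against $\sinh\rho\,d\rho$, splitting at $\rho=1$ and using a Laplace estimate for $\rho\ge1$, and applies Young's convolution inequality on the symmetric space $\Bbb H^2$.)

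\textbf{Main obstacle.} The only genuinely hard point is the two‑sided pointwise bound on $K_2$: one must control, uniformly across the regimes $\rho\lesssim1$ vs.\ $\rho\gtrsim1$, $t\lesssim1$ vs.\ $t\gtrsim1$, and $\rho\lesssim\sqrt t$ vs.\ $\rho\gtrsim\sqrt t$, the competition between the Gaussian $e^{-\rho^2/(4t)}$, the spectral–gap factor $e^{-t/4}$, and the polynomial/exponential corrections coming from the Harish–Chandra $c$–function and the exponential volume growth of $\Bbb H^2$. Once this classical input is granted, $(\ref{huhu899})$ and $(\ref{huhu89})$ follow as above by taking a supremum and interpolating.
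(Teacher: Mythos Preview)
Your proposal is correct and aligns with the paper's own treatment: both attribute the two-sided pointwise bound to Davies--Mandouvalos \cite{DM}, and both obtain $(\ref{huhu899})$ by simply taking the supremum of the kernel upper bound over $\rho$. The only difference is that for $(\ref{huhu89})$ the paper cites \cite{Coding} directly, whereas you supply a self-contained two-step Riesz--Thorin interpolation from the Markov endpoints $L^1\to L^1$, $L^\infty\to L^\infty$ and the ultracontractive bound $L^1\to L^\infty$; this is a standard and perfectly valid substitute, not a genuinely different route.
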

\begin{proof}
(\ref{huhu89}) can be found in \cite{Coding}. (\ref{huhu899}) follows directly by the upper bound for the heat kernel given above.
\end{proof}

The following lemma for the heat semigroup in $\Bbb R^2$ was obtained in Lemma 2.5 of Tao \cite{Tao4}. We remark that the same arguments work in the $\Bbb H^2$ case, because the proof in \cite{Tao4} only uses the decay estimate (\ref{huhu89}) and the self-ajointness of $e^{t\Delta}$, which are also satisfied by $e^{t\Delta_{\Bbb H^2}}$.
\begin{lemma}\label{appendix1}
For $f\in L^2_x$ defined on $\Bbb H^2$, we have
$$\int^{\infty}_0\|e^{t\Delta_{\Bbb H^2}}f\|^2_{L^{\infty}_x}dt\lesssim \|f\|^2_{L^2_x}.
$$
\end{lemma}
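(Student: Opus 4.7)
The plan is to adapt Tao's $TT^*$ argument (Lemma 2.5 of \cite{Tao4}) directly to the hyperbolic setting. As the author notes, the only features of the Euclidean heat semigroup used in that proof are (i) the $L^1\to L^\infty$ dispersive decay, which here is supplied by (\ref{huhu89}) with $p=\infty$, $r=1$ (so that $\|e^{\tau\Delta_{\Bbb H^2}}h\|_{L^\infty}\lesssim \tau^{-1}\|h\|_{L^1}$), and (ii) the self-adjointness of $e^{t\Delta}$ on $L^2$, which is equally valid for $e^{t\Delta_{\Bbb H^2}}$ by the spectral theorem. Since both ingredients are in hand, the Euclidean proof transfers with essentially no change.

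Concretely, first I would recast the desired inequality as the boundedness of the operator $T : L^2(\Bbb H^2) \to L^2((0,\infty); L^\infty(\Bbb H^2))$ defined by $(Tf)(t,x) = (e^{t\Delta_{\Bbb H^2}}f)(x)$. By the standard $TT^*$ identity, $\|T\|\lesssim 1$ is equivalent to the bound $\|TT^*\|_{L^2_t L^1_x \to L^2_t L^\infty_x}\lesssim 1$. Using self-adjointness of $e^{t\Delta_{\Bbb H^2}}$ together with Fubini, a direct pairing yields
\[
T^* g = \int_0^\infty e^{t\Delta_{\Bbb H^2}}\, g(t,\cdot)\, dt, \qquad (TT^*g)(s,\cdot) = \int_0^\infty e^{(s+t)\Delta_{\Bbb H^2}}\, g(t,\cdot)\, dt.
\]
Applying (\ref{huhu89}) at time $s+t$ then gives the pointwise kernel bound
\[
\|(TT^*g)(s,\cdot)\|_{L^\infty_x} \lesssim \int_0^\infty (s+t)^{-1}\|g(t,\cdot)\|_{L^1_x}\, dt.
\]

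At this stage the problem has been scalarised, and it remains to prove that the Hilbert-type integral operator $h\mapsto \int_0^\infty (s+t)^{-1} h(t)\, dt$ is bounded on $L^2((0,\infty))$. This is classical and follows from Schur's test with the weight $w(t) = t^{-1/2}$: the elementary change of variables $t = s\tau$ reduces both Schur integrals to the convergent Beta function $\int_0^\infty (1+\tau)^{-1} \tau^{-1/2}\, d\tau$. Because every step is a routine application of established tools, I do not anticipate any genuine obstacle; the only delicate point is a conceptual one, namely verifying that the $L^2$ self-adjointness and the smoothing estimate are the sole properties invoked in Tao's argument, so that nothing Euclidean-specific is lost when transferring to $\Bbb H^2$.
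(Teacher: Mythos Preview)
Your proposal is correct and is exactly the approach the paper invokes: the paper does not give an independent proof but simply observes that Tao's $TT^*$ argument from \cite{Tao4} carries over verbatim because it relies only on the $L^1\to L^\infty$ decay (\ref{huhu89}) and the self-adjointness of $e^{t\Delta_{\Bbb H^2}}$. Your write-up in fact supplies more detail than the paper itself, including the explicit reduction to the Hilbert-type operator and its Schur-test bound.
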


\section{The caloric gauge}
In order to study the asymptotic behaviors, we need to rewrite (\ref{1}) under a gauge. Let $\{e_1(t,x),Je_1(t,x)\}$ be an orthonormal frame for $u^*(T\mathbb{H}^2)$, i.e., $\{e_1(t,x),J(u(t,x))e_1(t,x)\}$ spans $T_{u(t,x)}\mathbb{H}^2$ for each $(t,x)\in [0,T]\times \mathbb{H}^2$ and $|e_1(t,x)|=1.$ Let $\psi_i=(\psi^1_i,\psi^2_i)$ for $i=0,1,2$ be the components of $\partial_{t,x}u$ in the frame $\{e_1(t,x),Je_1(t,x)\}$
$$
\psi _i^1 = \left\langle {{\partial _i}u,{e_1}} \right\rangle ,\psi _i^2 = \left\langle {{\partial _i}u,J{e_1}} \right\rangle.
$$
The isomorphism of $\Bbb R^2$ to $\Bbb C$ induces a complex valued function defined by $\phi_i=\psi_i^1+\sqrt{-1}\psi_i^2$. For any function $\varphi:[0,T]\times\Bbb H^2\to \Bbb C$, we associate it with a tangent vector field on $u^*(T\mathbb{H}^2)$ defined by
$$\varphi\longleftrightarrow\varphi e\triangleq \varphi^1e_1+\varphi^2Je_1.
$$ Then $u$ induces a covariant derivative on the trivial complex vector bundle $[0,T]\times\Bbb H^2$ defined by
$$D_i\varphi=\partial_i \varphi+\sqrt{-1}[A_i]^2_1\varphi,
$$
where the induced connection coefficients are defined by $[{A_i}]_1^2 = \left\langle {{\nabla _i}{e_1},J{e_1}} \right\rangle$. For simplicity, we denote $[A_i]^2_1$ by $A_i$ in the following.
It is easy to check the torsion free identity
\begin{align}\label{pknb}
D_i\phi_j=D_j\phi_i.
\end{align}
The commutator identity is given by
\begin{align}\label{commut}
[D_i,D_j]\varphi=\sqrt{-1}(\partial_i A_j-\partial_j A_i)\varphi\longleftrightarrow\mathbf{R}(\partial_iu, \partial_j u)(\varphi e).
\end{align}
We have a gauge freedom to choose the frame  $\{e_1,Je_1\}$. In fact, given any real valued function $\chi:[0,T]\times\Bbb H^2\to\Bbb R$, under the transform $U$ defined by
$$
U\left( {{k_1}{e_1} + {k_2}J{e_1}} \right) = ({k_1}\cos \chi  - {k_2}\sin \chi ){e_1} + \left( {{k_1}\sin \chi  + {k_2}\cos \chi } \right)J{e_1},
$$
we have
\begin{align}
&(e_1,J{e_1})\to (U{e_1},JU{e_1})\nonumber\\
&{A_i} \to \widehat{A}_i\triangleq\left\langle {{\nabla _i}U{e_1},JU{e_1}} \right\rangle  = {\partial _i}\chi  + {A_i}.\label{gauge}
\end{align}

\begin{lemma}
With the notions and notations given above, (\ref{1}) can be written as
\begin{align}\label{jnk}
\phi_t=zh^{ij}D_i\phi_j-zh^{ij}\Gamma^k_{ij}\phi_k,
\end{align}
where $z=\alpha-\sqrt{-1}\beta$.
\end{lemma}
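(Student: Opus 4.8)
The plan is to rewrite the Landau–Lifshitz equation \eqref{1} by expressing the tension field $\tau(u)$ in the orthonormal frame $\{e_1,Je_1\}$ and then recognizing the resulting expression in terms of the covariant derivatives $D_i$ and the connection coefficients $A_i$. First I would observe that since $N=\Bbb H^2$ is K\"ahler, $J$ is parallel, so $J$ commutes with the covariant derivative $\nabla$; hence the complex structure acts on the bundle $u^*(T\Bbb H^2)$ exactly as multiplication by $\sqrt{-1}$ does on the associated $\Bbb C$-bundle, under the correspondence $\varphi\longleftrightarrow \varphi e$. In particular, $\partial_t u$ corresponds to $\phi_0=\phi_t$, and $\partial_i u$ corresponds to $\phi_i$.

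Next I would compute $\tau(u)$ intrinsically. By definition $\tau(u)=\mathrm{tr}_h\nabla du = h^{ij}(\nabla_i du)(\partial_j) = h^{ij}\big(\nabla_i(\partial_j u) - \Gamma^k_{ij}\partial_k u\big)$, where $\Gamma^k_{ij}$ are the Christoffel symbols of the domain $M=\Bbb H^2$. Writing $\partial_j u$ as the vector field corresponding to $\phi_j$, the covariant derivative $\nabla_i(\partial_j u)$ corresponds under the frame to $D_i\phi_j$ (this is precisely how the connection coefficients $A_i=\langle\nabla_i e_1,Je_1\rangle$ enter, together with the compatibility of $\nabla$ with $J$ and with the metric). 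Hence $\tau(u)$ corresponds to the complex function $h^{ij}D_i\phi_j - h^{ij}\Gamma^k_{ij}\phi_k$. Then, using that $J(u)$ corresponds to multiplication by $\sqrt{-1}$ and that $u_t = \alpha\tau(u)-\beta J(u)\tau(u)$, the function attached to $u_t$ is $(\alpha-\sqrt{-1}\beta)\big(h^{ij}D_i\phi_j - h^{ij}\Gamma^k_{ij}\phi_k\big) = z\big(h^{ij}D_i\phi_j - h^{ij}\Gamma^k_{ij}\phi_k\big)$. Since the function attached to $u_t$ is $\phi_t$, this yields \eqref{jnk}.

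The main point requiring care is the identification of $\nabla_i(\partial_j u)$ with $D_i\phi_j$ in the frame, i.e.\ checking that the intrinsic covariant derivative $\nabla$ on $u^*(T\Bbb H^2)$ corresponds exactly to the operator $D_i = \partial_i + \sqrt{-1}A_i$ on the trivial $\Bbb C$-bundle. This is a direct computation: writing any section as $\varphi^1 e_1 + \varphi^2 Je_1$, applying $\nabla_i$, using $\nabla_i e_1 = \langle\nabla_i e_1,e_1\rangle e_1 + \langle\nabla_i e_1,Je_1\rangle Je_1 = A_i Je_1$ (the first term vanishes since $|e_1|=1$), and $\nabla_i(Je_1) = J\nabla_i e_1 = -A_i e_1$, one gets $\nabla_i(\varphi e) = (\partial_i\varphi^1 - A_i\varphi^2)e_1 + (\partial_i\varphi^2 + A_i\varphi^1)Je_1$, which corresponds to $\partial_i\varphi + \sqrt{-1}A_i\varphi = D_i\varphi$. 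I expect this frame-bookkeeping — keeping the domain Christoffel symbols $\Gamma^k_{ij}$ (which survive because $\tau(u)$ involves the trace over the domain metric) separate from the target connection encoded in $A_i$ — to be the only genuinely delicate point; everything else is substitution. No obstacle of substance is anticipated, as all ingredients (K\"ahler condition, definition of $A_i$, the correspondence $\varphi\leftrightarrow\varphi e$) are already in place.
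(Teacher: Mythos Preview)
Your proposal is correct and follows essentially the same approach as the paper: express $\tau(u)$ in the frame $\{e_1,Je_1\}$, identify the induced connection with $D_i=\partial_i+\sqrt{-1}A_i$, and read off the equation. The paper carries out the computation component-by-component (computing $\psi^1_t$ and $\psi^2_t$ separately before combining), whereas you organize it more conceptually by first establishing once and for all that $\nabla_i(\varphi e)\longleftrightarrow D_i\varphi$ and $J\longleftrightarrow\sqrt{-1}$; but the content is identical.
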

\begin{proof}
We first rewrite the tension field $\tau(u)$ under the gauge. Recall that
\begin{align*}
\tau(u)&=h^{ij}\nabla_i\partial_j u-h^{ij}u_*(\nabla_{\frac{\partial}{\partial x_i}}{\frac{\partial}{\partial x_j}})\\
&= {h^{ij}}{\nabla _i}\left( {\left\langle {{\partial _j}u,{e_1}} \right\rangle {e_1} + \left\langle {{\partial _j}u,J{e_1}} \right\rangle J{e_1}} \right) - \Gamma _{i,j}^k{h^{ij}}{\partial _k}u \\
&= {h^{ij}}\left( {{\partial _i}\psi _j^1{e_1} + \psi _j^1{A_i}J{e_1} + {\partial _i}\psi _j^2J{e_1} - \psi _j^2{A_i}{e_1}} \right) - \Gamma _{i,j}^k{h^{ij}}\psi _k^1{e_1} - \Gamma _{i,j}^k{h^{ij}}\psi _k^2J{e_1} \\
&= {h^{ij}}\left( {{\partial _i}\psi _j^1 - \psi _j^2{A_i}} \right){e_1} + {h^{ij}}\left( {\psi _j^1{A_i} + {\partial _i}\psi _j^2} \right)J{e_1} - \Gamma _{i,j}^k{h^{ij}}\psi _k^1{e_1} - \Gamma _{i,j}^k{h^{ij}}\psi _k^2J{e_1}.
\end{align*}
By the definition, we have
\begin{align*}
 \psi _t^1 &= \left\langle {{\partial _t}u,{e_1}} \right\rangle  = \alpha \left\langle {\tau (u),{e_1}} \right\rangle  -\beta \left\langle {J\tau (u),{e_1}} \right\rangle  \\
 &= \alpha {h^{ij}}\left( {{\partial _i}\psi _j^1 - \psi _j^2{A_i}} \right) - \alpha \Gamma _{i,j}^k{h^{ij}}\psi _k^1 +\beta {h^{ij}}\left( {\psi _j^1{A_i} + {\partial _i}\psi _j^2} \right) - \beta \Gamma _{i,j}^k{h^{ij}}\psi _k^2 \\
 \psi _t^2 &= \left\langle {{\partial _t}u,J{e_1}} \right\rangle  = \alpha \left\langle {\tau (u),J{e_1}} \right\rangle  -\beta \left\langle {J\tau (u),J{e_1}} \right\rangle  \\
 &= \alpha {h^{ij}}\left( {\psi _j^1{A_i} + {\partial _i}\psi _j^2} \right) - \alpha \Gamma _{i,j}^k{h^{ij}}\psi _k^2 - \beta {h^{ij}}\left( {{\partial _i}\psi _j^1 - \psi _j^2{A_i}} \right) + \beta \Gamma _{i,j}^k(u){h^{ij}}\psi _k^1.
\end{align*}
Then the complex valued function $\phi_t$ satisfies
$${\phi _t} = \alpha {h^{ij}}{D_i}{\phi _j} - \alpha {h^{ij}}\Gamma _{i,j}^k(u){\phi _k} - \sqrt { - 1} \beta {h^{ij}}{D_i}{\phi _j} +\sqrt { - 1} \beta {h^{ij}}\Gamma _{i,j}^k(u){\phi _k}.
$$
\end{proof}

The caloric gauge was first introduced by Tao \cite{Tao3} for the wave maps from $\Bbb R^{2+1}$ to $\mathbb{H}^n$. We give the definition of the caloric gauge in our setting.
\begin{definition}\label{pp}
Let  $u(t,x):[0,T]\times \mathbb{H}^2\to \mathbb{H}^2$ be a solution of (\ref{1}) in $C([0,T];\mathcal{H}^3)$. Suppose that the heat flow with $u_0$ as the initial data converges to a harmonic map $Q_{\infty}$ from $\mathbb{H}^2$ to $\mathbb{H}^2$. For a given orthonormal frame $\Xi(x)\triangleq\{\Xi_1(Q(x)),J(Q(x))\Xi_1(Q(x))\}$ which spans the tangent space $T_{Q(x)}\mathbb{H}^2$ for any $x\in \mathbb{H}^2$, a caloric gauge is a tuple consisting of a map  $\widetilde{u}:\Bbb R^+\times [0,T]\times\mathbb{H}^2\to\Bbb H^2$ and an orthonormal frame $\Omega\triangleq\{\Omega_1(\widetilde{u}(s,t,x)),J(\widetilde{u}(s,t,x))\Omega(\widetilde{u}(s,t,x))\}$ such that
\begin{align}\label{muqi}
\left\{ \begin{array}{l}
{\partial _s}\widetilde{u}= \tau (\widetilde{u}) \\
{\nabla _s}{\Omega _1} = 0 \\
\mathop {\lim }\limits_{s \to \infty } {\Omega _1} = {\Xi _1} \\
\end{array} \right.
\end{align}
where the convergence of frames is defined by
\begin{align}\label{convergence}
\left\{ \begin{array}{l}
 \mathop {\lim }\limits_{s \to \infty } \widetilde{u}(s,t,x) = Q(x) \\
 \mathop {\lim }\limits_{s \to \infty } \left\langle {{\Omega _1}(s,t,x),{\Theta _1}(\widetilde{u}(s,t,x))} \right\rangle  = \left\langle {{\Xi _1}(Q(x)),{\Theta _1}(Q(x))} \right\rangle  \\
 \mathop {\lim }\limits_{s \to \infty } \left\langle {{\Omega _1}(s,t,x),{\Theta _2}(\widetilde{u}(s,t,x))} \right\rangle  = \left\langle {{\Xi _1}(Q(x)),{\Theta _2}(Q(x))} \right\rangle  \\
 \end{array} \right.
\end{align}
\end{definition}

The remaining part of this section is devoted to the existence of the caloric gauge. The equation of the heat flow is given by
\begin{align}\label{8.29.1}
\left\{ \begin{array}{l}
 {\partial _s}u = \tau (u) \\
 u(s,x)\upharpoonright_{s=0} = {u_1}(x) \\
 \end{array} \right.
\end{align}
We recall the definition of the energy density $e$,
$$
e(u)=\frac{1}{2}|du|^2.
$$

The following two lemmas are essentially due to Eells, Sampson \cite{ES} and Li, Tam \cite{LT}.
\begin{lemma}\label{8.44}
Given initial data $u_1:\Bbb H^2\to\Bbb H^2$ with bounded energy density, suppose that $\tau(u_1)\in L^p_x$ for some $p>2$ and the image of $\Bbb H^2$ under the map $u_1$ is contained in a compact subset of $\Bbb H^2$. Then the heat flow equation (\ref{8.29.1}) has a global solution $u$, moreover the energy density $e(u)$ satisfies
\begin{align}
&(\partial_s-\Delta_{\mathbb{H}^2})|du|^2+2|\nabla d(u)|^2\le Ke(u)\label{8.4}\\
&(\partial_s-\Delta_{\mathbb{H}^2})|\partial_s u|^2+2|\nabla \partial_su|^2\le 0.\label{8.3}
\end{align}
\end{lemma}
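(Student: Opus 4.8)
The plan is to establish the two differential inequalities by Bochner-type computations, after first securing the global existence of the heat flow. First I would note that the global existence part is essentially standard: since the image of $u_1$ lies in a compact (hence geodesically convex, after possibly shrinking) subset of $\Bbb H^2$, and since $\Bbb H^2$ is a target of nonpositive curvature, one has local well-posedness in $\mathcal{H}^3$ by a fixed point argument (using the smoothing of $e^{s\Delta_{\Bbb H^2}}$ together with the hypothesis $\tau(u_1)\in L^p_x$, $p>2$), and then the energy identity $\frac{d}{ds}\int e(u)\,{\rm dvol_h} = -\int |\partial_s u|^2\,{\rm dvol_h} \le 0$ together with the forthcoming bound \eqref{8.4} rules out blow-up of $e(u)$; the $\mathcal{H}^3$-norm is then propagated by the equivalence Lemma \ref{new} and standard parabolic energy estimates, so the solution is global.

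For \eqref{8.4}, the key identity is the Bochner formula for the energy density of a map into a manifold. Write $e = \tfrac12 |du|^2$. A direct computation, commuting covariant derivatives and using that $u$ solves $\partial_s u = \tau(u)$, gives
\begin{align*}
(\partial_s - \Delta_{\Bbb H^2}) e = -|\nabla du|^2 - \big\langle du\cdot \mathrm{Ric}_{\Bbb H^2}, du\big\rangle + \big\langle \mathbf{R}^N(du,du)du, du\big\rangle,
\end{align*}
where $\mathrm{Ric}_{\Bbb H^2} = -(d-1)\,\mathrm{id} = -\,\mathrm{id}$ on $\Bbb H^2$ and $\mathbf{R}^N$ is the curvature of the target. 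On $\Bbb H^2$ the domain Ricci term is $+e$-favorable in sign (it contributes $+2e$ to $(\partial_s-\Delta)e$, which is harmless since we only want an upper bound with a constant $K$), while the crucial structural fact is that the target $\Bbb H^2$ has \emph{nonpositive} sectional curvature, so $\langle \mathbf{R}^N(du,du)du, du\rangle \le 0$ pointwise — this is exactly where the sign of the hyperbolic curvature is used, via the identity $(X\wedge Y)Z = \langle X,Z\rangle Y - \langle Y,Z\rangle X$ recorded in the preliminaries, which shows $\langle \mathbf{R}^N(X,Y)Y,X\rangle = -(|X|^2|Y|^2 - \langle X,Y\rangle^2)\le 0$. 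Multiplying through, $(\partial_s - \Delta_{\Bbb H^2})|du|^2 + 2|\nabla du|^2 \le 2|du|^2 \le K e(u)$ for $K = 4$, which is \eqref{8.4}.

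For \eqref{8.3}, set $v = \partial_s u \in u^*T\Bbb H^2$ and differentiate the heat flow equation in $s$: since $\nabla_s \partial_s u = \nabla_s \tau(u)$ and $\tau(u) = \mathrm{tr}_h \nabla du$, commuting $\nabla_s$ past the two spatial derivatives produces a curvature term, yielding the Jacobi-type equation $\nabla_s v = \Delta_u v + \mathbf{R}^N(v, du)du$ (trace over the domain), where $\Delta_u$ is the rough Laplacian on $u^*T\Bbb H^2$. Then the Bochner–Weitzenböck identity $\tfrac12\Delta_{\Bbb H^2}|v|^2 = \langle \Delta_u v, v\rangle + |\nabla v|^2$ gives
\begin{align*}
(\partial_s - \Delta_{\Bbb H^2})|v|^2 + 2|\nabla v|^2 = 2\langle \mathbf{R}^N(v,du)du, v\rangle = -2\big(|v|^2|du|^2 - \langle v, du\rangle^2\big) \le 0,
\end{align*}
again by the nonpositivity of the target curvature, which is precisely \eqref{8.3}. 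I expect the main obstacle to be purely bookkeeping: carefully carrying out the covariant-derivative commutations so that only the target curvature (with its favorable sign) and the lower-order domain Ricci term survive, and verifying that all the manipulations are justified — i.e., that $u(s,\cdot)$ stays smooth with image in a fixed compact set for all $s$, so that curvature terms and Christoffel symbols remain bounded and the maximum-principle/integration-by-parts steps are legitimate. This is exactly the content that the compactness hypothesis on $u_1(\Bbb H^2)$ and Lemma \ref{8.44}'s own conclusion \eqref{8.4} (which prevents energy concentration) are designed to supply.
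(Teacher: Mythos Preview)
Your proposal is correct. The paper itself does not supply a proof of this lemma but simply attributes it to Eells--Sampson and Li--Tam; the Bochner computations you carry out (the Eells--Sampson formula for $e(u)$ with the domain Ricci term absorbed into $K$ and the target curvature dropped by nonpositivity, and the Jacobi/Weitzenb\"ock argument for $|\partial_s u|^2$) are exactly the classical arguments from those references, so your approach coincides with what the paper is invoking.
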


\begin{corollary}
If $u$ is the solution in Lemma \ref{8.44}, then for some $C>0$
\begin{align}
(\partial_s-\Delta_{\mathbb{H}^2})|\partial_s u|&\le 0\label{VI4}\\
(\partial_s-\Delta_{\mathbb{H}^2})(|du|e^{-Cs})&\le 0.\label{uu}
\end{align}
\end{corollary}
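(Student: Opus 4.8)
The plan is to derive both pointwise differential inequalities directly from the Bochner-type inequalities (\ref{8.3}) and (\ref{8.4}) of Lemma \ref{8.44}, combined with the diamagnetic (Kato) inequality of Lemma \ref{wusijue3}; the only subtlety is that $|\partial_s u|$ and $|du|$ need not be differentiable where they vanish, and I would circumvent this by the usual $\epsilon$-regularization.

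First, for (\ref{VI4}), set $f_\epsilon=(|\partial_s u|^2+\epsilon^2)^{1/2}$ for $\epsilon>0$, so $f_\epsilon^2=|\partial_s u|^2+\epsilon^2$ and (\ref{8.3}) reads $(\partial_s-\Delta_{\mathbb{H}^2})f_\epsilon^2\le-2|\nabla\partial_s u|^2$. Writing the left side as $2f_\epsilon(\partial_s-\Delta_{\mathbb{H}^2})f_\epsilon-2|\nabla f_\epsilon|^2$ and using $\nabla f_\epsilon=\langle\partial_s u,\nabla\partial_s u\rangle/f_\epsilon$ together with $|\partial_s u|\le f_\epsilon$ to get $|\nabla f_\epsilon|\le|\nabla\partial_s u|$, I obtain
$$2f_\epsilon(\partial_s-\Delta_{\mathbb{H}^2})f_\epsilon\le-2|\nabla\partial_s u|^2+2|\nabla f_\epsilon|^2\le0.$$
Since $f_\epsilon\ge\epsilon>0$ this gives $(\partial_s-\Delta_{\mathbb{H}^2})f_\epsilon\le0$ in the classical sense, and letting $\epsilon\to0$ (the heat flow $u$, hence $\partial_s u$, being smooth by Lemma \ref{8.44} and parabolic regularity) yields (\ref{VI4}) in the distributional sense, and pointwise away from the zero set of $\partial_s u$.

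Next, for (\ref{uu}), the same computation with $g_\epsilon=(|du|^2+\epsilon^2)^{1/2}$ and (\ref{8.4}), recalling $e(u)=\tfrac12|du|^2$ and $|\nabla g_\epsilon|\le|\nabla du|$, gives $2g_\epsilon(\partial_s-\Delta_{\mathbb{H}^2})g_\epsilon\le\tfrac{K}{2}|du|^2-2|\nabla du|^2+2|\nabla g_\epsilon|^2\le\tfrac{K}{2}g_\epsilon^2$, hence $(\partial_s-\Delta_{\mathbb{H}^2})g_\epsilon\le\tfrac{K}{4}g_\epsilon$. Choosing $C\ge K/4$ then gives
$$(\partial_s-\Delta_{\mathbb{H}^2})(g_\epsilon e^{-Cs})=e^{-Cs}\big[(\partial_s-\Delta_{\mathbb{H}^2})g_\epsilon-Cg_\epsilon\big]\le e^{-Cs}\big(\tfrac{K}{4}-C\big)g_\epsilon\le0,$$
and sending $\epsilon\to0$ yields (\ref{uu}). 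The only real obstacle is the degeneracy of $t\mapsto|t|$ at the origin, i.e. the possible vanishing of $\partial_s u$ and $du$; the $\epsilon$-regularization above, combined with the smoothness of the global heat flow, is precisely what makes the passage $\epsilon\to0$ legitimate, and everything else is a direct expansion already encoded in (\ref{8.3}), (\ref{8.4}) and Kato's inequality.
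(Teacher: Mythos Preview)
Your proof is correct and follows essentially the same route as the paper: both arguments expand $(\partial_s-\Delta_{\mathbb{H}^2})$ applied to the square root of $|\partial_s u|^2$ (resp.\ $|du|^2$), invoke the diamagnetic inequality $|\nabla|X||\le|\nabla X|$, and feed in the Bochner inequalities (\ref{8.3}) and (\ref{8.4}). The only difference is cosmetic: the paper divides directly by $|\partial_s u|$ and $|du|$ (tacitly working where these are nonzero), whereas your $\epsilon$-regularization $f_\epsilon=(|\partial_s u|^2+\epsilon^2)^{1/2}$ makes the passage through the zero set explicit and rigorous before sending $\epsilon\to0$.
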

\begin{proof}
Direct calculations give
\begin{align}
{\partial _s}\left| {{\partial _s}u} \right| = \frac{1}{2}\frac{1}{{\left| {{\partial _s}u} \right|}}{\partial _s}{\left| {{\partial _s}u} \right|^2},\mbox{  }\mbox{  }{\Delta _{{\Bbb H^2}}}\left| {{\partial _s}u} \right| = \frac{1}{2}\left[ {{\Delta _{{\Bbb H^2}}}{{\left| {{\partial _s}u} \right|}^2} -2 {{\left| {\nabla \left| {{\partial _s}u} \right|} \right|}^2}} \right]\frac{1}{{\left| {{\partial _s}u} \right|}}.
\end{align}
Then the diamagnetic inequality
$|\nabla |\partial_su||\le |\nabla \partial_su|$ and (\ref{8.3}) yield
\begin{align*}
 {\partial _s}\left| {{\partial _s}u} \right| - {\Delta _{{\Bbb H^2}}}\left| {{\partial _s}u} \right| &= \frac{1}{{2\left| {{\partial _s}u} \right|}}\left( {{\partial _s}{{\left| {{\partial _s}u} \right|}^2} - {\Delta _{{\Bbb H^2}}}{{\left| {{\partial _s}u} \right|}^2} +2 {{\left| {\nabla \left| {{\partial _s}u} \right|} \right|}^2}}\right) \\
&\le {\left| {{\partial _s}u} \right|}\left( { - {{\left| {\nabla {\partial _s}u} \right|}^2} + {{\left| {\nabla \left| {{\partial _s}u} \right|} \right|}^2}} \right)
\end{align*}
thus proving (\ref{VI4}).
To prove (\ref{uu}), similar calculations, the diamagnetic inequality
$|\nabla |du||\le |\nabla du|$ and (\ref{8.4}) imply
\begin{align*}
 {\partial _s}|du| - {\Delta _{{\Bbb H^2}}}|du|  &= \frac{1}{{2|du|}}\left( {{\partial _s}|du|^2 - {\Delta _{{\Bbb H^2}}}|du|^2 +2 {{\left| {\nabla |du|} \right|}^2}} \right) \\
 &\le \frac{1}{{2|du|}}\left( { - 2{{\left| {\nabla du} \right|}^2} + K|du|^2 + 2{{\left| {\nabla \left| {du} \right|} \right|}^2}} \right) \\
 &\le \frac{1}{{2|du|}}K|du|^2.
\end{align*}
Let $C=\frac{K}{2}$, we obtain (\ref{uu}).
\end{proof}

It is known in the heat flow literature that Harnack inequality for the linear heat equation is useful to obtain $L^{\infty}$ bounds. The Harnack type inequality for complete manifolds was initially proved by Li and Yau \cite{LY}. The following form of Harnack inequality which is convenient in our case was obtained by Li and Xu \cite{LX}.
\begin{lemma}\label{kk}
If $M$ is a n-dimensional complete Riemannian manifold with $Ricci_M\ge -k$, and if $f(x,t):M\times(0,\infty)\to \Bbb R^+$ is a positive solution of the linear heat equation on $M$, then for $\forall x_1,x_2\in M$, $0<t_1<t_2<\infty$, the following inequality holds:
 $$
 u(x_1,t_1)\le u(x_2,t_2)B_1(t_1.t_2)exp\big[\frac{dist^2(x_2,x_1)}{4(t_2-t_1)}(1+B_2(t_1,t_2))\big],
 $$
where $dist(x_1,x_2)$ is the distance between $x_1$ and $x_2$, $B_1(t_1,t_2)=\big(\frac{e^{2kt_2}-2kt_2-1}{e^{2kt_1}-2kt_1-1}\big)^{\frac{n}{4}}$, and $B_2(t_1,t_2)=\frac{t_2coth(kt_2)-t_1coth(kt_1)}{t_2-t_1}.$
\end{lemma}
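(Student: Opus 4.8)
The plan is to follow the Li--Yau differential Harnack method, keeping the curvature term precise enough to recover the sharp time-dependent factors $B_1$ and $B_2$; write $u>0$ for the solution (the statement's $f$). First set $h=\log u$, so that $h$ solves $\partial_t h=\Delta h+|\nabla h|^2$ and $-\Delta h=|\nabla h|^2-\partial_t h$. The argument then splits into three stages: (i) a pointwise differential Harnack estimate valid on all of $M\times(0,\infty)$; (ii) a localized parabolic maximum principle that establishes (i) on a merely complete manifold; (iii) integration of the pointwise estimate along a space-time path to reach the stated inequality.

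For stage (i) I would introduce the auxiliary quantity $F=t\big(|\nabla h|^2-\alpha(t)\,\partial_t h\big)$ with a time weight $\alpha(t)\ge 1$ to be chosen later, and compute its evolution using the Bochner formula
$$
\Delta|\nabla h|^2=2|\nabla^2h|^2+2\langle\nabla h,\nabla\Delta h\rangle+2\,\mathrm{Ric}(\nabla h,\nabla h),
$$
together with $\mathrm{Ric}_M\ge-k$ and $|\nabla^2h|^2\ge\frac{(\Delta h)^2}{n}$; this yields a parabolic differential inequality of Riccati type, roughly $(\partial_t-\Delta)F\le -\tfrac{2}{nt}F^2+(\text{lower order in }F)+(\text{terms of order }k)$. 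For stage (ii), since $M$ is only complete, I would multiply $F$ by a spatial cutoff $\psi$ that equals $1$ on $B(x_0,\rho)$ and is supported in $B(x_0,2\rho)$, with $|\nabla\psi|^2/\psi$ and $|\Delta\psi|$ of size $O(\rho^{-2})$ by the Laplacian comparison theorem (which uses only $\mathrm{Ric}_M\ge-k$), apply the parabolic maximum principle to $\psi F$ at an interior space-time maximum, and let $\rho\to\infty$. This gives a pointwise bound
$$
|\nabla h|^2-\partial_t h\le \frac{n}{2t}\,\varphi(kt),
$$
with $\varphi$ an explicit function built from $\coth$; choosing the optimal profile $\alpha(t)$ is exactly the ODE optimization that produces the $\coth$ corrections, hence the factors $B_1$ and $B_2$.

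For stage (iii), fix $x_1,x_2\in M$ and $0<t_1<t_2$, let $\gamma:[0,1]\to M$ be a minimizing geodesic from $x_2$ to $x_1$ parametrized proportionally to arclength, and put $\eta(\sigma)=\big(\gamma(\sigma),(1-\sigma)t_2+\sigma t_1\big)$. Then
$$
\log\frac{u(x_1,t_1)}{u(x_2,t_2)}=\int_0^1\Big(\langle\nabla h,\dot\gamma\rangle-(t_2-t_1)\,\partial_t h\Big)\,d\sigma ,
$$
and after bounding $\langle\nabla h,\dot\gamma\rangle\le\frac{t_2-t_1}{\alpha}|\nabla h|^2+\frac{\alpha}{4(t_2-t_1)}|\dot\gamma|^2$ by Young's inequality and inserting the estimate of stage (ii) to cancel the $|\nabla h|^2$ contributions, one is left with
$$
\log\frac{u(x_1,t_1)}{u(x_2,t_2)}\le\frac{\alpha}{4(t_2-t_1)}\,\mathrm{dist}^2(x_1,x_2)+\frac{1}{\alpha}\int_{t_1}^{t_2}\frac{n}{2\tau}\varphi(k\tau)\,d\tau .
$$
Evaluating the time integral (it becomes $\log B_1$ after the obvious change of variables) and optimizing the free parameter $\alpha$ near $1$ so as to produce the factor $(1+B_2)$ on the distance term gives precisely the claimed inequality; in our application $M=\mathbb{H}^2$, so $n=2$, $k=1$, and all of these quantities are completely explicit.

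I expect the main obstacle to be stage (ii) in its sharp form: extracting the precise factors $B_1,B_2$ rather than the classical cruder Li--Yau constants $\frac{n\alpha^2}{2t}+\frac{n\alpha^2k}{2(\alpha-1)}$ forces the time-dependent weight $\alpha(t)$ inside $F$ together with a delicate analysis of the associated Riccati ODE, and one must verify that the localization error terms genuinely vanish as $\rho\to\infty$ without degrading the sharp constants. This is the content of Li--Xu \cite{LX}; I have only outlined the structure of the proof.
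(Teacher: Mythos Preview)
The paper does not prove this lemma at all: it is quoted verbatim as a result of Li--Xu \cite{LX} and used as a black box. Your outline is a faithful sketch of exactly that Li--Yau/Li--Xu argument (logarithmic substitution, Bochner-based differential Harnack with a time-dependent weight, Calabi-type cutoff localization, then path integration), so you are reconstructing the cited reference rather than diverging from the paper's approach.
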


Now we consider the heat flow from $\mathbb{H}^2$ to $\mathbb{H}^2$ with a parameter
\begin{align}\label{8.29.2}
\left\{ \begin{array}{l}
 {\partial _s}\widetilde{u} = \tau (\widetilde{u}) \\
 \widetilde{u}(s,t,x) \upharpoonright_{s=0}= u(t,x) \\
 \end{array} \right.
\end{align}
\begin{lemma}\label{11.1}
If $u(t,x)$ is a solution to (\ref{1}) in $C([0,T];\mathcal{H}^3)$, then there exists a harmonic map $Q_{\infty}$ such that as $s\to\infty$,
$$
\mathop {\lim }\limits_{s \to \infty } \mathop {\sup }\limits_{(x,t) \in {\mathbb{H}^2} \times [0,T]}
dist_{\mathbb{H}^2}(\widetilde{u}(s,x,t),Q_{\infty}(x))=0.
$$
\end{lemma}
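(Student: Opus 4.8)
The plan is to prove the convergence of the $s$-heat flow $\widetilde{u}(s,t,x)$ solving (\ref{8.29.2}) in two steps: first fix $t$ and use the classical Eells--Sampson/Li--Tam theory to get convergence to \emph{some} harmonic map $Q_\infty^{(t)}$, then show the limit is independent of $t$ by exploiting the Landau--Lifshitz structure of $\partial_t u$. For the first step, note that for each fixed $t\in[0,T]$ the initial datum $u(t,\cdot)\in\mathcal{H}^3$ has image in a compact subset of $\mathbb{H}^2$ (by Remark \ref{jiujin}, since $\|(u^1,u^2)\|_{L^\infty}$ is controlled by $\|u\|_{\mathfrak{H}^2}$ via Lemma \ref{new}) and bounded energy density, and $\tau(u(t,\cdot))=\frac{1}{z}\partial_t u(t,\cdot)\in L^p$ for suitable $p>2$ by the equivalence lemma and Sobolev embedding. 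Hence Lemma \ref{8.44} applies: the $s$-heat flow exists globally, and by (\ref{8.4}), (\ref{uu}) together with the maximum principle the energy density stays bounded uniformly in $s$ (the target $\mathbb{H}^2$ is nonpositively curved, so no bubbling occurs). Standard arguments then give subconvergence along $s_n\to\infty$ to a harmonic map; to upgrade to full convergence one uses (\ref{8.3}): $\int_0^\infty\|\nabla\partial_s u\|_{L^2}^2\,ds<\infty$ forces $\|\partial_s\widetilde u\|$ to decay, and then a monotonicity/\L ojasiewicz-type argument (or the argument of Hartman) pins down a unique limit $Q_\infty^{(t)}$, with convergence in $L^\infty_x$ via the heat-kernel bound (\ref{huhu899}) and Harnack (Lemma \ref{kk}) applied to $|\partial_s\widetilde u|$ and $|d\widetilde u|e^{-Cs}$.

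The core of the lemma is the $t$-independence of $Q_\infty^{(t)}$, and for this I would study $v\triangleq\partial_t\widetilde{u}$ as a section of $\widetilde{u}^*T\mathbb{H}^2$, which satisfies a Jacobi-type linear heat equation along the $s$-flow. Differentiating $\partial_s\widetilde u=\tau(\widetilde u)$ in $t$ gives $\nabla_s v = h^{ij}\nabla_i\nabla_j v + \mathbf{R}(v,\partial_i\widetilde u)\partial_j\widetilde u\, h^{ij}$; since $\mathbf{R}$ on $\mathbb{H}^2$ is nonpositive in the relevant sense, Bochner's formula yields
\begin{align}\label{prop-bochner}
(\partial_s-\Delta_{\mathbb{H}^2})|v|^2 + 2|\nabla v|^2 \le 0,
\end{align}
exactly as in (\ref{8.3}) — this is the Hartman monotonicity. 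Integrating (\ref{prop-bochner}) in $s$ and using that at $s=0$, $v(0,t,x)=\partial_t u(t,x)$ lies in $L^2_x$ (uniformly on $[0,T]$, from the well-posedness estimates of Section 4), we get $\|v(s,t,\cdot)\|_{L^2_x}$ nonincreasing in $s$ and $\int_0^\infty\|\nabla v\|_{L^2_x}^2\,ds<\infty$. Combined with the $L^\infty$ decay of the heat semigroup (\ref{huhu899}) applied to the equation for $|v|$ — which by the diamagnetic inequality satisfies $(\partial_s-\Delta_{\mathbb{H}^2})|v|\le 0$ — one concludes $\sup_x|v(s,t,x)|\to 0$ as $s\to\infty$, uniformly in $t$. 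Then for $t_1<t_2$,
\begin{align}\label{prop-dist}
d_{\mathbb{H}^2}\big(\widetilde u(s,t_1,x),\widetilde u(s,t_2,x)\big) \le \int_{t_1}^{t_2}|\partial_t\widetilde u(s,t,x)|\,dt \le (t_2-t_1)\sup_{t}\|v(s,t,\cdot)\|_{L^\infty_x}\xrightarrow{s\to\infty}0,
\end{align}
so $Q_\infty^{(t_1)}=Q_\infty^{(t_2)}$; denote the common limit $Q_\infty$. The uniform-in-$t$ statement in the lemma then follows by combining the uniform convergence $\widetilde u(s,t,\cdot)\to Q_\infty^{(t)}$ for each $t$ (which must itself be made uniform — see below) with (\ref{prop-dist}).

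The main obstacle I anticipate is the \emph{uniformity in $t$} of the $s\to\infty$ convergence, i.e.\ that $\sup_{(x,t)}d_{\mathbb{H}^2}(\widetilde u(s,t,x),Q_\infty(x))\to 0$ rather than merely pointwise-in-$t$ convergence. The ingredients (\ref{8.3}), (\ref{VI4}), (\ref{uu}), the heat kernel bound (\ref{huhu899}) and Harnack (Lemma \ref{kk}) all produce rates that depend on $\|\tau(u(t,\cdot))\|$, $\|d u(t,\cdot)\|$ and the size of the compact set containing $u(t,\mathbb{H}^2)$; to close the argument one needs these to be bounded uniformly on the \emph{compact} interval $[0,T]$, which should follow from $u\in C([0,T];\mathcal{H}^3)$ and the equivalence lemma (Lemma \ref{new}) — but one must be careful that the continuous functions $\mathcal{P},\mathcal{Q}$ there do not blow up, so that a single modulus of convergence works for all $t$. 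A secondary subtlety is justifying the differentiation of the $s$-flow in $t$ and the regularity needed to make (\ref{prop-bochner}) rigorous; this is handled by parabolic smoothing — $\widetilde u$ is smooth in $(s,t,x)$ for $s>0$ — together with the $\mathcal{H}^3$ bound at $s=0$ to control the $s\to 0^+$ behaviour. Once these uniformities are in place, the decomposition $d_{\mathbb{H}^2}(\widetilde u(s,t,x),Q_\infty(x))\le d_{\mathbb{H}^2}(\widetilde u(s,t,x),\widetilde u(s,0,x)) + d_{\mathbb{H}^2}(\widetilde u(s,0,x),Q_\infty(x))$ and the two estimates above finish the proof.
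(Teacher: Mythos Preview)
Your proposal is correct in spirit, and the second half (the $t$-independence via the Bochner inequality \eqref{prop-bochner} for $v=\partial_t\widetilde u$, the heat-kernel $L^1\to L^\infty$ decay, and the estimate \eqref{prop-dist}) is exactly what the paper does.

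The difference is in the first step. You argue for convergence at fixed $t$ via subconvergence plus a Hartman/\L ojasiewicz-type uniqueness argument, and then worry separately about uniformity in $t$. The paper instead notices that the \emph{same} mechanism you use for $v$ already does the whole job: by \eqref{8.3} and the maximum principle, $|\partial_s\widetilde u|^2$ is a subsolution of the scalar heat equation, so the kernel bound \eqref{huhu899} gives directly
\[
\sup_{x}\,|\partial_s\widetilde u(s,t,x)|^2 \;\le\; s^{-1}e^{-s/4}\,\|\tau(u(t,\cdot))\|_{L^2_x}^2 \;\lesssim\; C(T)\,s^{-1}e^{-s/4},
\]
with $C(T)=\sup_{t\in[0,T]}\|\partial_t u(t,\cdot)\|_{L^2_x}^2$ finite by the standing hypothesis $u\in C([0,T];\mathcal H^3)$. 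Integrating in $s$ yields a Cauchy estimate for $d_{\mathbb H^2}(\widetilde u(s_1,t,x),\widetilde u(s_2,t,x))$ that is uniform in $(t,x)$, so uniform convergence to some $Q_\infty(t,x)$ follows without passing through subconvergence, Harnack, or uniqueness of harmonic limits; that $Q_\infty(t,\cdot)$ is harmonic is then just the Li--Tam result. This also dissolves your ``main obstacle'': the uniformity in $t$ is automatic because the only $t$-dependent constant is $\|\partial_t u(t,\cdot)\|_{L^2}$, bounded on the compact interval. Your route works, but is longer than necessary.
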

\begin{proof}
The global existence of $\widetilde{u}$ is due to Lemma \ref{8.44}, Remark \ref{jiujin}, the embedding $\mathcal{H}^1\hookrightarrow L^{p}$ for $p\in[2,\infty)$ and diamagnetic inequality.
By (\ref{8.3}) and the maximum principle, we have the bound
\begin{align}\label{9.1}
|\partial_s \widetilde{u}|^2\le e^{s\Delta_{\mathbb{H}^2}}|\partial_s\widetilde{u}(0,t,x)|^2.
\end{align}
Thus (\ref{huhu899}) shows
\begin{align}\label{10.112}
\mathop {\sup }\limits_{x \in {\mathbb{H}^2}} \left| {{\partial _s}\widetilde{u}(s,t,x)} \right|^2 \le {s^{ - 1}}{e^{ - \frac{1}{4}s}}\int_{{\mathbb{H}^2}} {{{\left| {{\partial _s}\widetilde{u}(0,t,x)} \right|}^2}{\rm{dvol_h}}}.
\end{align}
Then (\ref{1}) and (\ref{8.29.2}) yield
$$
\mathop {\sup }\limits_{(x,t) \in {\mathbb{H}^2}\times[0,T]} \left| {{\partial _s}\widetilde{u}(s,t,x)} \right| \le {s^{ -\frac{1}{2}}}{e^{ - \frac{1}{8}s}}\int_{{\mathbb{H}^2}} {{{\left| {{\partial _t}u(t,x)} \right|}^2}{\rm{dvol_h}}}\le C(T){s^{ - 1}}{e^{ - \frac{1}{8}s}}.
$$
Therefore for any $1<s_1<s_2<\infty$  we have
$${d_{{\mathbb{H}^2}}}(\widetilde{u}({s_1},t,x),\widetilde{u}({s_2},t,x)) \le C\int_{{s_1}}^{{s_2}} {{e^{ - \frac{1}{8}s}}ds},$$
which implies $\widetilde{u}(s,t,x)$ converges uniformly on $(t,x)\in[0,T]\times \mathbb{H}^2$ to some map $Q_{\infty}(t,x)$. By Theorem 5.2 in \cite{LT}, for any fixed $t$, $Q_{\infty}(t,x)$ is a harmonic map form $\Bbb H^2\to\Bbb H^2$ with respect to $x$. It remains to prove $Q_{\infty}(t,x)$ is indeed independent of $t$. We consider the evolution of $|\partial_t u|^2$ with respect to $s$. In fact, $|\partial_t \widetilde{u}|^2$ satisfies
\begin{align}\label{10.1}
(\partial_s-\Delta_{\mathbb{H}^2})|\partial_t \widetilde{u}|^2\le -|\nabla \partial_t \widetilde{u}|^2-\mathbf{R}(\widetilde{u})(\nabla \widetilde{u},\partial_t \widetilde{u},\nabla \widetilde{u}, \partial_t \widetilde{u})\le 0.
\end{align}
Hence the maximum principle and (\ref{huhu899}) imply
\begin{align}\label{sdf}
\mathop {\sup }\limits_{x \in {\mathbb{H}^2}} {\left| {{\partial _t}\widetilde{u}(s,t,x)} \right|^2} \le {s^{ - 1}}{e^{ - \frac{1}{4}s}}\int_{{\Bbb H^2}} {{{\left| {{\partial _t}\widetilde{u}(0,t,x)} \right|}^2}{\rm{dvol_h}}}.
\end{align}
Consequently, we obtain for $0\le t_1<t_2\le T$,
$${d_{{\mathbb{H}^2}}}(\widetilde{u}(s,{t_1},x),\widetilde{u}(s,{t_2},x)) \le \int_{{t_1}}^{{t_2}} {\left| {{\partial _t}\widetilde{u}(s,t,x)} \right|} dt \le C{s^{ - \frac{1}{2}}}{e^{ - \frac{s}{8}}}({t_2} - {t_1}).
$$
Let $s\to\infty$, we conclude ${d_{{\mathbb{H}^2}}}(Q_{\infty}({t_1},x),Q_{\infty}({t_2},x)) = 0$, thus finishing the proof.
\end{proof}

In order to obtain a global bound independent of $s$ for the quantities related to the heat flow (\ref{8.29.2}), we need to get estimates which only depend on the energy of $u(t,x)$.
\begin{lemma}
Let $u$ solve $(\ref{1})$ in $C([0,T];\mathcal{H}^3)$. If $\widetilde{u}$ is the solution to $(\ref{8.29.2})$ with initial data $u(t,x)$, then we have
\begin{align}
\int_0^\infty  {\left\| { {{\partial _s}\widetilde{u}(s,t,x)} } \right\|} _{L_x^\infty }^2ds &\lesssim \left\| {{{\tau}(u(t,x))}} \right\|_{L_x^2}^2 \label{9.2}\\
\int_0^1 {\left\| {\widetilde{e}(s,t,x)} \right\|} _{L_x^\infty }ds &\lesssim \left\| {{e_0}} \right\|_{L_x^1} \label{9.3}\\
\mathop {\sup }\limits_{(s,t) \in [\lambda,\infty ) \times [0,T]} {\left\| {\widetilde{e}(s,t,x)} \right\|_{L_x^\infty }} &\lesssim c(\lambda)\left\| {{e_0}} \right\|_{L_x^1},\label{3.14a}
\end{align}
where $\widetilde{e}$ and $e_0$ are the energy density of $\widetilde{u}$ and $u_0$ respectively.
\end{lemma}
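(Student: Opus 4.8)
The plan is to establish the three estimates in succession, in each case combining the differential inequalities from Lemma~\ref{8.44} and its Corollary with the heat-kernel bounds of Lemma~\ref{8.5}. For \eqref{9.2}, I would start from \eqref{8.3}, namely $(\partial_s-\Delta_{\mathbb{H}^2})|\partial_s\widetilde{u}|^2+2|\nabla\partial_s\widetilde{u}|^2\le 0$, integrate in $x$ over $\mathbb{H}^2$, and use that $\int_{\mathbb{H}^2}\Delta_{\mathbb{H}^2}|\partial_s\widetilde{u}|^2\,\mathrm{dvol_h}=0$ to get $\frac{d}{ds}\|\partial_s\widetilde{u}\|_{L^2_x}^2\le -2\|\nabla\partial_s\widetilde{u}\|_{L^2_x}^2$; integrating in $s$ over $[0,\infty)$ then bounds $\int_0^\infty\|\nabla\partial_s\widetilde{u}\|_{L^2_x}^2\,ds\lesssim\|\partial_s\widetilde{u}(0,t,x)\|_{L^2_x}^2=\|\tau(u(t,x))\|_{L^2_x}^2$. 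Feeding this into Lemma~\ref{appendix1} (applied to $\partial_s\widetilde{u}$, which solves a perturbed heat equation) together with the monotonicity \eqref{VI4} and the Duhamel/maximum-principle comparison $|\partial_s\widetilde{u}(s)|\le e^{(s-s_0)\Delta_{\mathbb{H}^2}}|\partial_s\widetilde{u}(s_0)|$ for $s\ge s_0$, one upgrades the $L^2_x$ control into the $\int_0^\infty\|\partial_s\widetilde{u}\|_{L^\infty_x}^2\,ds$ bound; concretely, split $\int_0^\infty = \int_0^1 + \int_1^\infty$, use Lemma~\ref{appendix1} with initial time $s=1$ and data $\partial_s\widetilde{u}(1,t,x)\in L^2_x$ for the tail, and use \eqref{8.3} plus Sobolev on the compact $s$-interval $[0,1]$ together with the $\dot H^1_x$ bound just obtained for the initial piece.

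For \eqref{9.3} I would integrate \eqref{8.4}, $(\partial_s-\Delta_{\mathbb{H}^2})|du|^2+2|\nabla d\widetilde{u}|^2\le K\widetilde{e}$, which after integrating in $x$ gives $\frac{d}{ds}\int_{\mathbb{H}^2}\widetilde{e}\,\mathrm{dvol_h}\le 0$, so the energy is nonincreasing and bounded by $\|e_0\|_{L^1_x}$ for all $s\ge 0$. To pass from this $L^1_x$ control of $\widetilde{e}$ to the time-integrated $L^\infty_x$ bound on $[0,1]$, the natural route is the local smoothing: from \eqref{uu}, $v:=|d\widetilde{u}|e^{-Cs}$ is a subsolution of the heat equation, hence $v(s)\le e^{s\Delta_{\mathbb{H}^2}}v(0)$, and then \eqref{huhu899} gives $\|v(s)\|_{L^\infty_x}\lesssim s^{-1}e^{-s/4}\||d\widetilde{u}(0)|\|_{L^1_x}$. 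This has a non-integrable $s^{-1}$ singularity at $s=0$, so a crude bound is not enough; instead I would use the parabolic Harnack inequality of Lemma~\ref{kk} applied to the positive heat subsolution built from $\widetilde e$ to trade the pointwise bound at time $s$ against a spatial average at a slightly earlier time, gaining the extra power of $s$ needed to make $\int_0^1\|\widetilde e(s)\|_{L^\infty_x}\,ds$ converge; this is essentially the Li--Tam argument and the reason Lemmas~\ref{kk} and \ref{8.5} were quoted. Finally \eqref{3.14a} follows from \eqref{9.3} and the mean value property: for $s\ge\lambda$ we have $\widetilde e(s)\le e^{(s-s')\Delta_{\mathbb{H}^2}}\widetilde e(s')$ after absorbing the exponential factor from \eqref{uu}, and averaging over $s'\in[s-\lambda/2,s-\lambda/4]$, say, combined with \eqref{9.3} (translated to start at any base time, using monotonicity of the energy) yields $\sup_{s\ge\lambda}\|\widetilde e(s)\|_{L^\infty_x}\lesssim c(\lambda)\|e_0\|_{L^1_x}$ with $c(\lambda)$ blowing up as $\lambda\to 0$.

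The main obstacle I anticipate is \eqref{9.3}: the heat-kernel decay \eqref{huhu899} only gives the singular weight $s^{-1}$, which is exactly borderline non-integrable near $s=0$, so one cannot simply integrate the pointwise bound. The resolution must exploit the subsolution structure more carefully — either via the Li--Xu form of the Harnack inequality (Lemma~\ref{kk}) to interpolate in time and recover an $s^{-1+\varepsilon}$-type bound, or via a Moser-type iteration that converts the conserved $L^1_x$ energy directly into an $L^\infty_x$ bound with integrable time weight. The estimates \eqref{9.2} and \eqref{3.14a} are comparatively routine once \eqref{9.3} and the $\dot H^1_x$-in-$s$ integrability are in hand; the reduction from $\|\partial_t u\|_{L^2_{t,x}}$-type a priori bounds to $\|\tau(u)\|_{L^2_x}$ in \eqref{9.2} uses only \eqref{1} and \eqref{8.29.2} as already exploited in the proof of Lemma~\ref{11.1}.
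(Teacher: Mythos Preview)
You have the right ingredients but you are misallocating them, and in the case of \eqref{9.3} this leads to an unnecessary obstacle that you then try to overcome with heavy machinery.

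For \eqref{9.2} and \eqref{9.3} the paper's argument is a one-line application of Lemma~\ref{appendix1} to the pointwise heat subsolutions you already wrote down. From \eqref{VI4} and the maximum principle one has $|\partial_s\widetilde u(s)|\le e^{s\Delta_{\mathbb H^2}}|\tau(u(t,\cdot))|$, and Lemma~\ref{appendix1} applied to $f=|\tau(u(t,\cdot))|$ gives \eqref{9.2} immediately; no splitting of the $s$-interval, no $\dot H^1_x$ detour, no Sobolev on $[0,1]$ is needed. Likewise, from \eqref{uu} one has $|d\widetilde u(s)|\le e^{Cs}e^{s\Delta_{\mathbb H^2}}|d\widetilde u(0)|$, and since $\widetilde e\sim |d\widetilde u|^2$ this yields
\[
\int_0^1\|\widetilde e(s)\|_{L^\infty_x}\,ds
\;\lesssim\; e^{2C}\int_0^1\big\|e^{s\Delta_{\mathbb H^2}}|d\widetilde u(0)|\big\|_{L^\infty_x}^2\,ds
\;\lesssim\; \big\||d\widetilde u(0)|\big\|_{L^2_x}^2
= \|e_0\|_{L^1_x},
\]
again by Lemma~\ref{appendix1}. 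The ``non-integrable $s^{-1}$'' you flag appears only because you reached for the $L^1_x\to L^\infty_x$ bound \eqref{huhu899} on $|d\widetilde u|$; the point is that after squaring, the quantity $\int_0^1\|\widetilde e\|_{L^\infty_x}\,ds$ is exactly an $L^2_sL^\infty_x$ norm of $|d\widetilde u|$, which is precisely what Lemma~\ref{appendix1} controls by the $L^2_x$ norm of the data, i.e.\ $\|e_0\|_{L^1_x}$. No Harnack inequality or Moser iteration is needed here.

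Where the paper does use the Li--Xu Harnack inequality (Lemma~\ref{kk}) is for \eqref{3.14a}, not \eqref{9.3}: one compares $\widetilde e$ at $(s_1,x_1)$ with a linear heat solution launched at time $s_1-\lambda/2$, uses Harnack to trade the value at $x_1$ for an average over a unit ball at the slightly later time $s_1+1$, and then bounds that average by the total (nonincreasing) energy. Your alternative for \eqref{3.14a} --- bounding $\|\widetilde e(s)\|_{L^\infty_x}$ by $e^{C\lambda}\|\widetilde e(s')\|_{L^\infty_x}$ for $s'\in[s-\lambda/2,s-\lambda/4]$ via the $L^\infty_x$-contractivity of $e^{t\Delta_{\mathbb H^2}}$, and then averaging in $s'$ and invoking the translated version of \eqref{9.3} --- is a legitimate and arguably simpler route once \eqref{9.3} is in hand; it avoids Harnack altogether. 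So the essential correction is: use Lemma~\ref{appendix1} on the square-root quantities for \eqref{9.2} and \eqref{9.3}, and reserve either Harnack or your averaging trick for \eqref{3.14a}.
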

\begin{proof}
We first prove (\ref{9.3}). (\ref{uu}) with the maximum principle gives
$$
\sqrt{\widetilde{e}(s,t,x)}\le e^{Cs}e^{s\Delta_{\mathbb{H}^2}}\sqrt{\widetilde{e}(0,t,x)}.
$$
Then Lemma \ref{appendix1} yields
$$
\int^1_0\|\widetilde{e}(s,t,x)\|_{L^{\infty}_x}ds\lesssim e^{C}\int^1_0\|e^{s\Delta_{\mathbb{H}^2}}\sqrt{\widetilde{e}(0,t,x)}\|^2_{L^{\infty}_x}ds\lesssim \|\sqrt{\widetilde{e}(0,t,x)}\|^2_{L^2_x}.
$$
Thus (\ref{9.3}) is obtained.
(\ref{9.2}) is a direct consequence of (\ref{VI4}) and Lemma \ref{appendix1}. The remained (\ref{3.14a}) needs the Harnack inequality stated in Lemma \ref{kk}. Fix an arbitrary $s_1\ge \lambda$. Suppose that $w(s,t,x):\Bbb R^+\times [0,T]\times \mathbb{H}^2\to \Bbb R$ is a solution to the linear heat equation
\begin{align}\label{hh}
\left\{ \begin{array}{l}
 {\partial _s}w - {\Delta _{{\Bbb H^2}}}w = 0, \\
 w(s,t,x)\upharpoonright_{s=s_1-\frac{\lambda}{2}} = {e^{ - {s_1}K+\frac{\lambda}{2}K}}\widetilde{e}({s_1}-\frac{\lambda}{2},t,x).
 \end{array} \right.
\end{align}
Elementary calculus especially the mean-value formula gives for $s\ge\frac{\lambda}{2}$,
$$ B_1(s,s+1)\lesssim c(\lambda),\mbox{  }B_2(s,s+1)\lesssim c(\lambda).
$$
Therefore, we infer from Lemma \ref{kk} that for any $x_1,x_2\in \mathbb{H}^2$ with $dist(x_1,x_2)\le 1$, $s\ge s_1$, we have a uniform bound
\begin{align}\label{a1.1}
w(s,x_1)\lesssim c(\lambda)w(s+1,x_2).
\end{align}
On the other hand, we have
$$(\partial_s-\Delta_{\mathbb{H}^2})(e^{-sK}\widetilde{e}(s)-w(s))\le 0, \mbox{ }w(s_1-\frac{\lambda}{2})=e^{-s_1K+\frac{\lambda}{2}K}\widetilde{e}(s_1-\frac{\lambda}{2}),
$$
thus by maximum principle for any $s\ge s_1-\lambda/2$,
$$\widetilde{e}(s)\le e^{sK}w(s).
$$
Therefore (\ref{a1.1}) yields a bound for $\widetilde{e}$
\begin{align}\label{9,8}
\widetilde {e}(s_1,t,x_1)\le e^{s_1K}w(s_1,t,x_1)\lesssim c(\lambda)e^{s_1K}w(s_1+1,t,x_2).
\end{align}
Integrating (\ref{9,8}) on the geodesic ball centered at $x_1$ of radius 1 with respect to $x_2$, one has
$$
\widetilde {e}(s_1)\lesssim c(\lambda)e^{s_1K}\int_{\mathbb{H}^2}w(s_1+1,t,x){\rm{d{vol}_h}}.
$$
Since $e^{s\Delta_{\mathbb{H}^2}}$ is bounded on $L^2$ then we have from (\ref{hh}) that
$$
\widetilde {e}(s_1,t,x)\lesssim c(\lambda)\int_{\mathbb{H}^2}\widetilde{e}(s_1-\frac{\lambda}{2},t,y){\rm{d{vol}_h}}.
$$
Then (\ref{3.14a}) follows by the non-increasing of the energy.
\end{proof}

\begin{remark}
By (\ref{VI4}) and (\ref{10.1}) and some calculations we have
\begin{align}
|\partial_s\tilde{u}(s,t,x)|&\lesssim e^{s\Delta_{\Bbb H^2}}\big(|\partial_t u(t,x)|\big),\label{VI3}\\
|\partial_t\tilde{u}(s,t,x)|&\le e^{s\Delta_{\Bbb H^2}}\big(|\partial_t u(t,x)|\big).\label{VI9}
\end{align}
\end{remark}

Now we prove the existence of the caloric gauge in Definition \ref{pp}.
\begin{proposition}\label{3.3}
Given any solution $u(t,x)$ of (\ref{1}) in $C([0,T]; \mathcal{H}^3)$, suppose that the limit harmonic map of the heat flow (\ref{8.29.2}) with initial data $u_0$ is $Q_{\infty}(x)$. For any fixed frame $\Xi\triangleq\{\Xi_1(Q_{\infty}(x)),J\Xi_1(Q_{\infty}(x))\}$, there exists a unique corresponding caloric gauge defined in Defintion \ref{pp}.
\end{proposition}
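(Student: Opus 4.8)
The statement asserts existence and uniqueness of the caloric gauge, given that the map part $\widetilde{u}$ is already constructed (it solves the heat flow \eqref{8.29.2}, globally well-posed by the earlier lemmas, and converges uniformly to the harmonic map $Q_\infty$ as $s\to\infty$ by Lemma \ref{11.1}). So the real content is the construction of the orthonormal frame $\Omega=\{\Omega_1,J\Omega_1\}$ along $\widetilde{u}$ satisfying the three conditions in \eqref{muqi}: parallel transport in $s$, namely $\nabla_s\Omega_1=0$; the limit condition $\lim_{s\to\infty}\Omega_1=\Xi_1$; and orthonormality, which is automatic once $|\Omega_1|=1$ at one value of $s$ because parallel transport is an isometry and commutes with the complex structure $J$ (since $N=\mathbb{H}^2$ is Kähler and $\nabla J=0$). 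The plan is to integrate the ODE $\nabla_s\Omega_1=0$ \emph{backwards from $s=\infty$}, using the prescribed frame $\Xi_1$ at the endpoint as the "initial" data.

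\textbf{Step 1: set up the backward transport.} For each fixed $(t,x)$, the condition $\nabla_s\Omega_1=0$ is a linear first-order ODE along the curve $s\mapsto\widetilde{u}(s,t,x)$ in $\mathbb{H}^2$. Because $\widetilde{u}(s,t,x)$ stays in a fixed compact subset of $\mathbb{H}^2$ uniformly in $(s,t,x)$ (this follows from Remark \ref{jiujin} applied to $u_0$, the energy bounds, and the uniform convergence in Lemma \ref{11.1}), the Christoffel symbols along the curve are uniformly bounded, so parallel transport is well-defined on any finite $s$-interval. To define $\Omega_1$ by transport "from $\infty$", I would first show the curve $s\mapsto\widetilde{u}(s,t,x)$ has finite length: by \eqref{9.2} and the pointwise bound $|\partial_s\widetilde{u}|\lesssim s^{-1}e^{-s/8}$ established in the proof of Lemma \ref{11.1} (or more directly the decay in \eqref{10.112} and \eqref{VI3}), we have $\int_0^\infty|\partial_s\widetilde{u}(s,t,x)|\,ds<\infty$ uniformly in $(t,x)$. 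Hence on the half-line $[1,\infty)$ the curve has uniformly bounded length, the parallel transport operator $P_{s\to s'}$ along it converges as $s'\to\infty$ (Cauchy in the transport group, using boundedness of curvature/Christoffel symbols and finite length), and I can \emph{define}
\[
\Omega_1(s,t,x)\;=\;\lim_{s'\to\infty}P_{s'\to s}\big(\Xi_1(Q_\infty(x))\big),
\]
where $P_{s'\to s}$ is parallel transport along $\widetilde{u}(\cdot,t,x)$ from parameter $s'$ to $s$. This is a unit vector in $T_{\widetilde{u}(s,t,x)}\mathbb{H}^2$, it satisfies $\nabla_s\Omega_1=0$ by construction (parallel transport is associative), and $\lim_{s\to\infty}\Omega_1=\Xi_1$ because $\widetilde{u}(s,t,x)\to Q_\infty(x)$ and $P_{s\to s}=\mathrm{id}$; the two inner-product convergence conditions in \eqref{convergence} follow from continuity of $\langle\cdot,\Theta_j(\cdot)\rangle$ and the convergence $\widetilde{u}(s,t,x)\to Q_\infty(x)$.

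\textbf{Step 2: regularity in $(t,x)$ and uniqueness.} Smoothness (or the needed Sobolev regularity) of $(s,t,x)\mapsto\Omega_1$ follows from smooth dependence of solutions of the transport ODE on parameters, together with the regularity of $\widetilde{u}$ coming from parabolic smoothing for \eqref{8.29.2} and the regularity of $Q_\infty$ and $\Xi_1$ — here one uses that $\widetilde{u}\in\mathcal{H}^3$-type bounds propagate in $s$, as developed in the earlier lemmas and the equivalence Lemma \ref{new}. Uniqueness is immediate: if $\Omega_1,\Omega_1'$ both satisfy \eqref{muqi}, then $w(s):=\langle\Omega_1(s),\Omega_1'(s)\rangle_{\mathbb{H}^2}$ (suitably complexified, i.e.\ also pairing against $J\Omega_1'$) satisfies $\partial_s w=0$ by $\nabla_s\Omega_1=\nabla_s\Omega_1'=0$ and $\nabla J=0$, hence $w$ is constant in $s$; letting $s\to\infty$ and using $\Omega_1,\Omega_1'\to\Xi_1$ forces $w\equiv$ the value corresponding to $\Omega_1=\Omega_1'$, i.e.\ the two frames coincide.

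\textbf{Main obstacle.} The routine ODE theory gives a frame for each fixed $(t,x)$; the delicate point is controlling the limit $s'\to\infty$ and the dependence on $(t,x)$ \emph{uniformly}, which is exactly what is needed to make $\Omega$ a genuine frame field with the right function-space regularity rather than a merely measurable section. This rests on two quantitative inputs already available: the uniform-in-$(t,x)$ finiteness (indeed exponential decay) of $\int^\infty|\partial_s\widetilde{u}|\,ds$ from \eqref{9.2}, \eqref{10.112}, \eqref{VI3}, and the uniform confinement of the image of $\widetilde{u}$ to a fixed compact set, which bounds the curvature of $\mathbb{H}^2$ encountered along every transport curve. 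Once these are in hand the construction and its uniqueness are forced, so I expect the write-up to consist mainly of assembling these estimates and invoking continuous dependence for the transport ODE.
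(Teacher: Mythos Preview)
Your argument is correct and reaches the same conclusion, but the construction of the frame differs from the paper's. You integrate the parallel-transport ODE \emph{backwards from $s=\infty$}, making sense of $P_{\infty\to s}$ via the finite length of the curve $s\mapsto\widetilde{u}(s,t,x)$. The paper instead fixes an arbitrary orthonormal frame $E_0$ at $s=0$ (for instance the global frame $\Theta$), transports it \emph{forward} via $\nabla_s\Omega_1=0$ with initial data at $s=0$, shows the resulting frame converges to some limit $E_\infty$ as $s\to\infty$ (using the same exponential decay of $|\partial_s\widetilde{u}|$ that you invoke), and then applies a constant-in-$s$ gauge rotation $U(t,x)\in SO(2)$ to force $U E_\infty=\Xi$. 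The paper's forward-then-rotate approach sidesteps the need to interpret ``parallel transport from infinity'' and hence the slight imprecision in your formula $\lim_{s'\to\infty}P_{s'\to s}(\Xi_1(Q_\infty(x)))$: the input $\Xi_1(Q_\infty(x))$ lives in $T_{Q_\infty(x)}\mathbb{H}^2$, not in the domain $T_{\widetilde{u}(s',t,x)}\mathbb{H}^2$ of $P_{s'\to s}$, so to make this literal you must first identify the nearby tangent spaces via the global trivialization $\Theta$, or equivalently reparametrize by arc length and transport along the compactified curve $[0,L]\to\mathbb{H}^2$ with endpoint $Q_\infty(x)$. Once that identification is made explicit your route is more direct; the paper's route trades directness for working only with finite-$s$ objects throughout. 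Your uniqueness argument via constancy of the inner products is essentially the paper's, which uses $\frac{d}{ds}\langle\Psi_1-\Psi_2,\Psi_1-\Psi_2\rangle=0$ together with the common limit $\Xi_1$ at $s=\infty$.
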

\begin{proof}
We first show the existence part. Choose an arbitrary orthonormal frame $E_0(t,x)\triangleq\{e_1(t,x),Je_1(t,x)\}$ such that $E_0(t,x)$ spans the tangent space $T_{u(t,x)}{\mathbb{H}^2}$ for each $(t,x)\in [0,T]\times \mathbb{H}^2$. This kind of frame does exist, in fact we have a global orthonormal frame for $\mathbb{H}^2$ defined by (\ref{frame}). Then evolving (\ref{8.29.2}) with initial data $u(t,x)$, we have from Lemma \ref{11.1} that there exists a harmonic map $Q_{\infty}:\mathbb{H}^2\to \mathbb{H}^2,$ such that $\widetilde{u}(s,t,x)$ converges to $Q_{\infty}$ uniformly for $(t,x)\in[0,T]\times \mathbb{H}^2$ as $s\to\infty$. Meanwhile, we evolve $E_0$ in $s$ according to
\begin{align}\label{11.2}
\left\{ \begin{array}{l}
{\nabla _s}{\Omega _1}(s,t,x) = 0 \\
{\Omega _1}(s,t,x)\upharpoonright_{s=0} = {e_1}(t,x) \\
\end{array} \right.
\end{align}
Denote the evolved frame as $E_s\triangleq \{\Omega_1(s,t,x),J\Omega_1(s,t,x)\}$. Then $\Omega_2\triangleq J\Omega_1(s,t,x)$ satisfies
${\nabla _s}{\Omega _2}(s,t,x) = 0$ as well. We claim that there exists some orthonormal frame $E_{\infty}\triangleq\{e_1(\infty,t,x),Je_1(\infty,t,x)\}$ which spans $T_{Q_{\infty}(x)}\mathbb{H}^2$ for each $(t,x)\in [0,T]\times\mathbb{H}^2$ such that
\begin{align}\label{pl}
\mathop {\lim }\limits_{s \to \infty }{\Omega_1(s,t,x)}  = e_1(\infty,t,x).
\end{align}
Indeed, by the definition of the convergence of frames given in (\ref{convergence}) and the fact $\widetilde{u}(s,t,x)$ converges to $Q_{\infty}(x)$, it suffices to show for some scalar function $a_i:[0,T]\times \mathbb{H}^2\to \Bbb R$
\begin{align}\label{aw}
\mathop {\lim }\limits_{s \to \infty } \left\langle {{\Omega_1}(s,t,x),{\Theta _i}(\widetilde{u}(s,t,x))} \right\rangle  = a_i(t,x).
\end{align}
Direct calculations give
$$
\left| \nabla _s \Theta_i(\widetilde{u}(s,t,x))\right| \lesssim \left| {{\partial _s}\widetilde{u}} \right|,
$$
which combined with (\ref{10.112}) and $\nabla_s\Omega=0$ implies that for $s>1$
$$\big|{\partial _s}\left\langle {{\Omega_1}(s,t,x),{\Theta _i}(\widetilde{u}(s,t,x))} \right\rangle \big|\lesssim C(T)e^{-\frac{1}{8}s}.
$$
Hence $(\ref{aw})$ holds for some $a_i(t,x)$, thus verifying (\ref{pl}). It remains to adjust the initial frame $E_0$ to make the limit frame $E_{\infty}$ coincide with the given frame $\Xi$. This can be achieved by the gauge transform invariance illustrated in Section 2.1. Indeed, since for any $U:[0,T]\times\mathbb{H}^2\to SO(2)$ and the solution $\widetilde{u}(s,t,x)$ to (\ref{8.29.2}), we have $\nabla_s U(t,x)\Omega(s,t,x)=U(t,x)\nabla_s\Omega(s,t,x)$, then the following gauge symmetry holds
\begin{align*}
 {E_0}\triangleq\left\{ {{e_1}(t,x),J{e_1}(t,x)} \right\} &\mapsto {{E'}_0}\triangleq\left\{ {U(t,x){e_1}(t,x),JU(t,x){e_1}(t,x)} \right\} \\
 {E_s}\triangleq\left\{ {{\Omega_1}(s,t,x),J{\Omega_1}(s,t,x)} \right\} &\mapsto {{E'}_s}\triangleq\left\{ {U(t,x){\Omega_1}(s,t,x),JU(t,x){\Omega_1}(s,t,x)} \right\}.
\end{align*}
Therefore choosing $U(t,x)$ such that $U(t,x)E_{\infty}=\Xi$, where $E_{\infty}$ is the limit frame obtained by (\ref{pl}), suffices for our purpose. The uniqueness of the gauge follows from the identity
$$
\frac{d}{{ds}}\left\langle {{\Psi _1} - {\Psi _2},{\Psi _1} - {\Psi _2}} \right\rangle  = 0,
$$
where $(\Psi _1,J\Psi _1)$ and $(\Psi _2,J\Psi _2)$ are two caloric gauges satisfying (\ref{muqi}).
\end{proof}

For any given complex valued functions $\phi,\varphi,\psi$ defined on $[0,T]\times\Bbb H^2$, define the $\wedge$ operator by
\begin{align*}
\phi\wedge\varphi=\phi^1\varphi^2-\phi^2\varphi^1.
\end{align*}
Then (\ref{commut}) reduces to
\begin{align}\label{commut1}
(\partial_i{A_j}-\partial_jA_i)= \phi_i\wedge\phi_j.
\end{align}

The following lemma gives the bounds for the connection coefficients matrix $A_{x,t}$.
\begin{lemma}
Suppose that $\Omega(s,t,x)$ is the caloric gauge constructed in Proposition \ref{3.3}, then we have for $i=1,2$
\begin{align}
&\mathop {\lim }\limits_{s \to \infty } {A_i}(s,t,x) =\left\langle {{\nabla _i}{\Xi_1 }(x),J{\Xi_1}(x)} \right\rangle\label{kji}\\
&\mathop {\lim }\limits_{s \to \infty } {A_t}(s,t,x) = 0\label{kji22}
\end{align}
Particularly, we have for $i=1,2$, $s>0$,
\begin{align}
&{A_i}(s,t,x)\sqrt{h^{ii}}(x) = \int_s^\infty \sqrt{ h^{ii}(x)} \phi_s\wedge\phi _i ds' + { \sqrt{h^{ii}(x)}}\left\langle {{\nabla _i}{\Xi_1}(x),J{\Xi_1}(x)} \right\rangle.\label{edf}\\
&{A_t}(s,t,x)=\int^{\infty}_s\phi_s\wedge\phi_tds'.\label{edf22}
\end{align}
Moreover, let $\Xi(x)=\Theta(Q_{\infty})$ in Proposition \ref{3.3}, we have the bounds for $A_{x,t}$:
\begin{align}
 {\left\| {\sqrt{h^{ii}}{A_i}(t,s,x)} \right\|_{L_s^\infty L_x^\infty ({\Bbb R^ + } \times {\Bbb H^2})}} &\lesssim \|du\|_{L^2_x} + {\left\| {du} \right\|_{L_x^2}}{\left\| {{\partial _t}u(t,x)} \right\|_{L_x^2}} \label{3.30}\\
 {\left\| {{A_t}(t,s,x)} \right\|_{L_s^\infty L_x^\infty ({\Bbb R^ + } \times {\Bbb H^2})}} &\lesssim \left\| {{\partial _t}u(t,x)} \right\|_{L_x^2}^2.\label{VI2}
\end{align}
\end{lemma}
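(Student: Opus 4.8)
The plan is to establish the integral representations \eqref{edf}--\eqref{edf22} first, and then derive the quantitative bounds \eqref{3.30}--\eqref{VI2} from them using the decay estimates of the heat semigroup that were assembled in Section 2. The starting point is the structure equation for the connection coefficient along the heat direction. Differentiating the defining relation $A_i = \langle \nabla_i \Omega_1, J\Omega_1\rangle$ with respect to $s$ and using $\nabla_s \Omega_1 = 0$ together with the commutator identity \eqref{commut} (applied to the pair of directions $s$ and $i$), one gets
\begin{align*}
\partial_s A_i = \partial_s\langle \nabla_i\Omega_1, J\Omega_1\rangle = \langle \nabla_s\nabla_i\Omega_1, J\Omega_1\rangle = \langle \nabla_i\nabla_s\Omega_1 + \mathbf{R}(\partial_s\widetilde{u},\partial_i\widetilde{u})\Omega_1, J\Omega_1\rangle,
\end{align*}
and the first term vanishes, leaving $\partial_s A_i = \langle \mathbf{R}(\partial_s\widetilde{u},\partial_i\widetilde{u})\Omega_1, J\Omega_1\rangle$, which by \eqref{commut1} (more precisely, by the same bundle-curvature computation that produced \eqref{commut}) is exactly $\phi_s\wedge\phi_i$. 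The same computation with $t$ in place of $i$ gives $\partial_s A_t = \phi_s\wedge\phi_t$. Integrating these two identities from $s$ to $\infty$ and using the limits \eqref{kji}--\eqref{kji22} as boundary values at $s=\infty$ yields \eqref{edf} and \eqref{edf22}; the limits \eqref{kji}--\eqref{kji22} themselves follow from the construction of the caloric gauge in Proposition \ref{3.3}, since $\Omega_1(s,t,x)\to\Xi_1(x)$ and $\widetilde{u}(s,t,x)\to Q_\infty(x)$, so that $A_i \to \langle\nabla_i\Xi_1,J\Xi_1\rangle$ and, because $\partial_t Q_\infty = 0$ (established in Lemma \ref{11.1}), $A_t\to 0$. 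The convergence of the integrals is guaranteed by the exponential decay of $\phi_s$ (equivalently $\partial_s\widetilde{u}$) coming from \eqref{10.112}.

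Next I turn to the pointwise bound \eqref{3.30}. From \eqref{edf}, writing $\sqrt{h^{ii}}\phi_i$ for the orthonormal-frame components of $du$, we have $|\sqrt{h^{ii}}A_i(s,t,x)| \le \int_s^\infty |\phi_s|\,|\sqrt{h^{ii}}\phi_i|\,ds' + |\sqrt{h^{ii}}\langle\nabla_i\Xi_1,J\Xi_1\rangle|$. The boundary term is handled by the choice $\Xi(x) = \Theta(Q_\infty(x))$: then $\langle\nabla_i\Xi_1,J\Xi_1\rangle$ is a quantity built from $dQ_\infty$ and the Christoffel symbols \eqref{christ}, and since $Q_\infty$ has compact image and $\|dQ_\infty\|_{L^2}$ is controlled (via Lemma \ref{8.44} and the energy monotonicity, the limit harmonic map has energy bounded by that of $u$, hence by $\|du\|_{L^2_x}$), this term is $\lesssim \|du\|_{L^2_x}$. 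For the integral term, I estimate $|\sqrt{h^{ii}}\phi_i| \lesssim \widetilde{e}(s',t,x)^{1/2}$ and $|\phi_s| = |\partial_s\widetilde{u}|$, and then apply Cauchy--Schwarz in $s'$:
\begin{align*}
\int_s^\infty |\partial_s\widetilde{u}|\,\widetilde{e}^{1/2}\,ds' \lesssim \Big(\int_0^\infty \|\partial_s\widetilde{u}\|_{L^\infty_x}^2\,ds'\Big)^{1/2}\Big(\int_0^\infty \|\widetilde{e}\|_{L^\infty_x}\,ds'\Big)^{1/2}.
\end{align*}
Here the first factor is bounded by $\|\tau(u)\|_{L^2_x} \lesssim \|\partial_t u\|_{L^2_x}$ via \eqref{9.2} (using $\partial_t u = \alpha\tau(u) - \beta J\tau(u)$, so $|\tau(u)| \sim |\partial_t u|$ up to the constants $\alpha,\beta$). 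The second factor is more delicate: \eqref{9.3} only controls $\int_0^1 \|\widetilde{e}\|_{L^\infty_x}ds' \lesssim \|e_0\|_{L^1_x}$, so on the tail $s'>1$ I instead use \eqref{9.1}--\eqref{10.112}-type decay: \eqref{VI3} gives $|\partial_s\widetilde{u}| \lesssim e^{s'\Delta_{\Bbb H^2}}(|\partial_t u|)$, hence $\|\partial_s\widetilde{u}\|_{L^\infty_x} \lesssim (s')^{-1}e^{-s'/4}\|\partial_t u\|_{L^1_x}$ — but $\|\partial_t u\|_{L^1_x}$ is not available, so on the tail I pair the exponential decay of $\|\partial_s\widetilde u\|_{L^\infty_x}$ (from \eqref{10.112}, which gives decay in terms of $\|\partial_t u\|_{L^2_x}$) against the uniform-in-$s$ bound $\|\widetilde{e}(s',t,x)\|_{L^\infty_x} \lesssim \|e_0\|_{L^1_x}$ from \eqref{3.14a} with $\lambda = 1$. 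Splitting $\int_s^\infty = \int_s^{\max(s,1)} + \int_{\max(s,1)}^\infty$ and combining the small-time bound \eqref{9.3} on the first piece with the exponentially-decaying-times-bounded estimate on the second, one obtains the integral $\lesssim \|du\|_{L^2_x}\|\partial_t u\|_{L^2_x}$, which is \eqref{3.30}.

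Finally, \eqref{VI2} is obtained in the same way but more simply, starting from \eqref{edf22}: $|A_t(s,t,x)| \le \int_s^\infty |\phi_s|\,|\phi_t|\,ds' = \int_s^\infty |\partial_s\widetilde{u}|\,|\partial_t\widetilde{u}|\,ds'$, and now both factors decay exponentially in $s'$ — $|\partial_s\widetilde{u}| \lesssim (s')^{-1/2}e^{-s'/8}\|\partial_t u\|_{L^2_x}$ from \eqref{10.112} (together with $|\partial_s\widetilde u(0)| \sim |\partial_t u|$), and $|\partial_t\widetilde{u}| \lesssim (s')^{-1/2}e^{-s'/8}\|\partial_t u\|_{L^2_x}$ from \eqref{sdf} (or directly from \eqref{VI9}) — so the integral converges and is bounded by $\|\partial_t u\|_{L^2_x}^2$ uniformly in $s$, with no boundary term since $A_t\to 0$. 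I expect the main obstacle to be the bookkeeping in \eqref{3.30}: the naive Cauchy--Schwarz fails because $\int_0^\infty\|\widetilde e\|_{L^\infty_x}ds'$ diverges (the energy does not decay, precisely because of the harmonic map), so one must exploit the faster-than-any-power decay of $\partial_s\widetilde u$ to absorb the merely-bounded energy density on the tail, which is the technical heart of the lemma and the reason the weighted splitting at $s'=1$ is needed.
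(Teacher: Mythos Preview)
Your derivation of the structure equations $\partial_s A_i = \phi_s\wedge\phi_i$ and $\partial_s A_t = \phi_s\wedge\phi_t$ is correct and matches the paper. The gap is in identifying the limits \eqref{kji}--\eqref{kji22}. You write that these ``follow from the construction of the caloric gauge in Proposition~\ref{3.3}, since $\Omega_1\to\Xi_1$'', but the convergence established there is only in the sense of \eqref{convergence} --- pointwise convergence of the scalar inner products $\langle\Omega_1,\Theta_k(\widetilde u)\rangle$ --- whereas $A_i=\langle\nabla_i\Omega_1,J\Omega_1\rangle$ involves a \emph{derivative} of the frame. Convergence of $\Omega_1$ does not by itself give convergence of $\nabla_i\Omega_1$. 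The paper bridges this as follows: it first uses the exponential decay of $\partial_s A_i$ (which you do have) to show that $\sqrt{h^{ii}}A_i(s)$ converges to \emph{some} limit $a_i(t,x)$; then it identifies $a_i$ in the distributional sense via the identity $\langle\nabla_i\Omega_j,\Theta_k\rangle=\partial_i\langle\Omega_j,\Theta_k\rangle-\langle\Omega_j,\nabla_i\Theta_k\rangle$, integrating the first term by parts against test functions and invoking the $C^1_{loc}$ convergence of $\widetilde u$ to $Q_\infty$ (Li--Tam) for the second. The argument for \eqref{kji22} is similar but with an extra step: one shows that $\partial_t\langle\Omega_1,\Theta_i\rangle$ converges uniformly as $s\to\infty$; since the limit of $\langle\Omega_1,\Theta_i\rangle$ is $\langle\Xi_1,\Theta_i(Q_\infty)\rangle$, which is $t$-independent, this derivative must vanish; and the remainder $\langle\Omega_1,\nabla_t\Theta_i\rangle$ is bounded by $|\partial_t\widetilde u|$, which decays by \eqref{sdf}. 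Simply invoking $\partial_t Q_\infty=0$ does not suffice.

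Your argument for \eqref{VI2} also has a quantitative problem near $s=0$. The pointwise bounds you quote from \eqref{10.112} and \eqref{sdf} give $\|\partial_s\widetilde u\|_{L^\infty_x},\|\partial_t\widetilde u\|_{L^\infty_x}\lesssim (s')^{-1/2}e^{-s'/8}\|\partial_tu\|_{L^2_x}$, so their product contributes $\int_s^\infty (s')^{-1}e^{-s'/4}\,ds'$, which diverges as $s\downarrow 0$; the bound is therefore not uniform over $s\in\mathbb R^+$. The paper instead feeds \eqref{VI3} and \eqref{VI9} into Lemma~\ref{appendix1} to obtain $\int_0^\infty\|\partial_s\widetilde u\|_{L^\infty_x}^2ds'\le\|\partial_tu\|_{L^2_x}^2$ and the analogous bound for $\partial_t\widetilde u$, and then closes by Cauchy--Schwarz in $s'$. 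Your treatment of \eqref{3.30}, by contrast, is essentially the paper's argument (split at $s'=1$, use \eqref{9.2}--\eqref{9.3} on the short piece and exponential decay against \eqref{3.14a} on the tail); the only slip is that the boundary term requires $\|dQ_\infty\|_{L^\infty_x}$, not $L^2$, which the paper gets from \eqref{3.14a} together with $C^1_{loc}$ convergence of $\widetilde u$ to $Q_\infty$.
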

\begin{proof}
Direct calculations give
\begin{align}\label{zfg54xc}
 \left|{\sqrt{h^{ii}(x)}}\left\langle {{\nabla _i}{\Theta_1}(Q_{\infty}(x)),J{\Xi_1}(x)}\right\rangle\right|&\le |{\nabla}{\Theta_1}(Q_{\infty}(x))|\le|dQ_{\infty}(x)|.
\end{align}
Since $u(s,t,x)$ converges to $Q_{\infty}(x)$ in $C^1_{loc}$ and $\|d\widetilde{u}(s,t,x)\|_{L^{\infty}}\lesssim\|du(t,x)\|_{L^2_x}$ uniformly for $s\ge 1$ shown by (\ref{3.14a}), we obtain
\begin{align}\label{zfg55xc}
\|dQ_{\infty}(x)\|_{L^{\infty}}\lesssim \|du(t,x)\|_{L^2_x}.
\end{align}
Thus (\ref{zfg54xc}), (\ref{zfg55xc}) give
\begin{align}\label{zfg57xc}
 \left|{\sqrt{h^{ii}(x)}}\left\langle {{\nabla _i}{\Theta_1}(Q_{\infty}(x)),J{\Theta_1}(Q_{\infty}(x))}\right\rangle\right|&\le \|du(t,x)\|_{L^2_x}.
\end{align}
First, we prove (\ref{edf}) under the assumption of (\ref{kji}). From the identity (\ref{commut1}), we obtain
\begin{align}\label{zxc}
\partial_s A_i-\partial_i A_s=\phi_s \wedge\phi_i,
\end{align}
then (\ref{edf}) follows immediately from $A_s=0$, (\ref{kji}) and the fundamental theorem of calculus.
Second, we prove (\ref{kji}).
Multiplying (\ref{zxc}) with $\sqrt{h^{ii}(x)}$, since $\sqrt{h^{ii}(x)}|\partial_i\widetilde{u}|\le|d\widetilde{u}|$, then by (\ref{3.14a}), we infer from $A_s=0$, (\ref{10.112}) that have for $s\ge \lambda$
\begin{align}\label{cvb2}
|\partial_s(\sqrt{h^{ii}}(x) A_i)|\lesssim c(\lambda)\|\partial_tu(t,x)\|_{L^2_x}e^{-\frac{s}{8}}.
\end{align}
Therefore, we infer from (\ref{cvb}) and (\ref{cvb2}) that for some $a_1(t,x)$ and $a_2(t,x)$
\begin{align}\label{84fr}
\left| {\sqrt{{h^{ii}}}(x){A_i}(s,t,x) - {a_i}(t,x)} \right| \lesssim {e^{ - \frac{s}{8}}}.
\end{align}
Recall the facts that $\widetilde{u}(s,t,x)$ converges to $Q_{\infty}(x)$ in the $C^1_{loc}$ norm as $s\to\infty$  (Theorem 4.3 of \cite{LT}), and the frame $\Omega$ converges to $\Xi$ in the sense of (\ref{convergence}). Then by the definition of $A_i$, in order to prove (\ref{kji}) it suffices to verify
\begin{align}\label{goupigu}
\left\langle {{\nabla _i}{\Omega _j},{\Theta _k}(\widetilde{u})} \right\rangle  \to \left\langle {{\nabla _i}{\Xi _j},{\Theta _k}(Q_{\infty})} \right\rangle.
\end{align}
By the identity
$$\left\langle {{\nabla _i}{\Omega _j},{\Theta _k}(\widetilde{u})} \right\rangle  = {\partial _i}\left\langle {{\Omega _j},{\Theta _k}(\widetilde{u})} \right\rangle  - \left\langle {{\Omega _j},{\nabla _i}{\Theta _k}(\widetilde{u})} \right\rangle,
$$
for any $\varphi\in C^{\infty}_c(\Bbb H^2;\Bbb R)$ we have
\begin{align}\label{cvb}
\int_{{\Bbb H^2}}\sqrt{{h^{ii}}}(x) \left\langle {{\nabla _i}{\Omega _j},{\Theta _k}(\widetilde{u})} \right\rangle \varphi (x){\rm{dvol_h}} &= \int_{{\Bbb H^2}}\sqrt{{h^{ii}}}(x) {\varphi (x)} {\partial _i}\left\langle {{\Omega _j},{\Theta _k}(\widetilde{u})} \right\rangle {\rm{dvol_h}}\nonumber\\
&- \int_{{\Bbb H^2}} \sqrt{{h^{ii}}}(x){\varphi (x)} \left\langle {{\Omega _j},{\nabla _i}{\Theta _k}(\widetilde{u})} \right\rangle {\rm{dvol_h}}.
\end{align}
By integration by parts , $\widetilde{u}(s,t,x)$ converges to $Q_{\infty}(x)$ in the $C^0$ norm as $s\to\infty$ and (\ref{convergence}), the first term in the right hand side of (\ref{cvb}) converges to
$$\int_{{\Bbb H^2}} \sqrt{{h^{ii}}}(x){\varphi (x)} {\partial _i}\left\langle {{\Xi _j},{\Theta _k}(Q_{\infty})} \right\rangle {\rm{dvol_h}}.
$$
By the explicit expression of $\Theta$ and the fact that $\widetilde{u}(s,t,x)$ converges to $Q_{\infty}(x)$ in the $C^1_{loc}$ norm, the second term in the right hand side of (\ref{cvb}) converges to
$$\int_{{\Bbb H^2}} \sqrt{h^{ii}}{\varphi (x)\left\langle {{\Xi _j},{\nabla _i}{\Theta _k}(Q_{\infty})} \right\rangle {\rm{dvol_h}}}.
$$
Then (\ref{goupigu}) holds in the distribution sense. Thus (\ref{84fr}) implies (\ref{kji}).\\
Third, we turn to (\ref{kji22}). We claim it suffices to prove
\begin{align}
\mathop {\lim }\limits_{s \to \infty } {\partial _t}\left\langle {\Omega_1,{\Theta _1}} \right\rangle  = 0,\label{1188}\\
\mathop {\lim }\limits_{s \to \infty } {\partial _t}\left\langle {\Omega_1,J{\Theta _1}} \right\rangle  = 0.\label{11881}
\end{align}
Indeed, (\ref{kji22}) is equivalent to
$$\mathop {\lim }\limits_{s \to \infty } \left\langle {{\nabla _t}\Omega_1 ,{\Theta _1}} \right\rangle  = 0,\mbox{  }\mathop {\lim }\limits_{s \to \infty } \left\langle {{\nabla _t}\Omega_1 ,J{\Theta _1}} \right\rangle  = 0.$$
Since (\ref{sdf}) shows
$$
\|\partial_t\widetilde{u}\|_{L^{\infty}}\lesssim e^{-\frac{s}{8}}\|\partial_tu\|_{L^2_x},
$$
and direct calculation gives
\begin{align}
&{\partial _t}\left\langle {{\Omega _1},{\Theta _i}} \right\rangle  = \left\langle {{\nabla _t}{\Omega _1},{\Theta _i}} \right\rangle  + \left\langle {{\Omega _1},{\nabla _t}{\Theta _i}} \right\rangle  \label{3aw}\\
&{\nabla _t}{\Theta _1} = {\nabla _t}\left( {\frac{\partial }{{\partial {y_1}}}{e^{{\widetilde{u}^2}}}} \right) = {\partial _t}{\widetilde{u}^2}{e^{{\widetilde{u}^2}}}\frac{\partial }{{\partial {y_1}}} + {e^{{\widetilde{u}^2}}}{\partial _t}{\widetilde{u}^k}\overline{\Gamma }_{k1}^l\frac{\partial }{{\partial {y_l}}}\nonumber\\
&{\nabla _t}{\Theta _2} = {\nabla _t}\left( {\frac{\partial }{{\partial {y_2}}}} \right) = {\partial _t}{\widetilde{u}^k}\overline{\Gamma }_{k2}^l\frac{\partial }{{\partial {y_l}}},\nonumber
\end{align}
we conclude that
$$\left| {{\partial _t}\left\langle {{\Omega _1},{\Theta _i}} \right\rangle  - \left\langle {{\nabla _t}{\Omega _1},{\Theta _i}} \right\rangle } \right| \le \left| {{\nabla _t}{\Theta _i}} \right| \le \left| {{\partial _t}\widetilde{u}} \right|,
$$
and further
$$\mathop {\lim }\limits_{s \to \infty } \left| {{\partial _t}\left\langle {{\Omega _1},{\Theta _i}} \right\rangle  - \left\langle {{\nabla _t}{\Omega _1},{\Theta _i}} \right\rangle } \right| = 0.
$$
Therefore it suffices to prove (\ref{1188}) and (\ref{11881}). Since it has been verified in Proposition \ref{3.3} that
$$\mathop {\lim }\limits_{s \to \infty } \left\langle {{\Omega _1},{\Theta _i}} \right\rangle  = \mathop {\lim }\limits_{s \to \infty } \left\langle {{\Xi _1}(Q(x)),{\Theta _i}(Q(x))} \right\rangle,
$$
in order to prove (\ref{1188}), (\ref{11881}), it suffices to show ${\partial _t}\left\langle {{\Omega _1},{\Theta _i}} \right\rangle$ converges uniformly on any given compact subset of $[0,T]\times\Bbb H^2$ as $s\to\infty$. By the definition of $A_t$, $A_s=0$, $\nabla_s\Omega=0$, one has
\begin{align}\label{kku90}
{\partial _s}\left\langle {{\nabla _t}{\Omega _1},{\Theta _i}} \right\rangle  = {\partial _s}\left\langle {{A_t}{\Omega _2},{\Theta _i}} \right\rangle  = {\partial _s}{A_t}\left\langle {{\Omega _2},{\Theta _i}} \right\rangle  + {A_t}\left\langle {{\Omega _2},{\nabla _s}{\Theta _i}} \right\rangle.
\end{align}
Meanwhile, (\ref{commut1}) shows
$${\partial _s}{A_t}=\phi_s\wedge\phi_t.$$
Then we infer from (\ref{10.112}), (\ref{sdf}) that
$$|{\partial _s}{A_t}|\lesssim e^{-\frac{s}{8}}\|\partial_tu\|^2_{L^2_x}.$$
Hence $A_t$ converges uniformly on $[0,T]\times\Bbb H^2$, and thus uniformly bounded w.r.t., $(t,s,x)\in[0,T]\times[1,\infty)\times\Bbb H^2$.    Then by (\ref{kku90}) and $|{{\nabla _s}{\Theta _i}}|\le|\partial_s\widetilde{u}|$, we obtain
$$\left| {{\partial _s}\left\langle {{\nabla _t}{\Omega _1},{\Theta _i}} \right\rangle } \right| \le \left| {{\partial _s}{A_t}} \right| + \left| {{A_t}} \right|\left| {{\partial _s}\widetilde{u}} \right| \le C(T){e^{ - \frac{s}{8}}}.
$$
Therefore $\left\langle {{\nabla _t}{\Omega _1},{\Theta _i}} \right\rangle$ converges as $s\to\infty$. Hence the convergence of ${\partial _t}\left\langle {{\Omega _1},{\Theta _i}} \right\rangle$
as $s\to\infty$ follows from (\ref{3aw}), $\left| {{\nabla _t}{\Theta _i}} \right|\le |\partial_t\widetilde{u}|\lesssim e^{-\frac{s}{8}}$. Thus (\ref{1188}) is obtained, and similar arguments yield (\ref{11881}) which ends the proof of (\ref{kji22}).\\
Forth, we verify (\ref{VI2}). Lemma \ref{appendix1}, (\ref{VI3}) and (\ref{VI9}) show
\begin{align*}
\int_0^\infty  {\left\| {{\partial _s}\tilde{u}(s,t,x)} \right\|_{L_x^\infty }^2ds \le } \left\| {{\partial _t}u(t,x)} \right\|_{L_x^2}^2,
\mbox{  }\int_0^\infty  {\left\| {{\partial _t}\tilde{u}(s,t,x)} \right\|_{L_x^\infty }^2ds \le } \left\| {{\partial _t}u(t,x)} \right\|_{L_x^2}^2.
\end{align*}
Thus (\ref{VI2}) follows by the definition of $\phi_s$, $\phi_t$ and H\"older inequality. It remains to prove (\ref{3.30}). By (\ref{sdf}) for $s\ge1$ we have
$${\left\| {{\partial _s}u(s,t,x)} \right\|_{L_x^\infty }} \le {e^{ - \frac{s}{8}}}{\left\| {{\partial _t}u(t,x)} \right\|_{L_x^2}},
$$
which combined with (\ref{3.14a}) yields
\begin{align}\label{VI5}
\int_1^\infty  {\sqrt {{h^{ii}}} } {\left\| \phi _s\wedge\phi _i \right\|_{L_x^\infty }}ds \le {\left\| {du} \right\|_{L_x^2}}{\left\| {{\partial _t}u(t,x)} \right\|_{L_x^2}}.
\end{align}
For $s\in[0,1]$, using (\ref{9.3}) and (\ref{9.2}), we have from H\"older that
\begin{align}\label{VI6}
\int_0^1 {\sqrt {{h^{ii}}} {{\left\| \phi _s\wedge\phi _i\right\|}_{L_x^\infty }}ds}  \le {\left\| {du} \right\|_{L_x^2}}{\left\| {{\partial _t}u(t,x)} \right\|_{L_x^2}}.
\end{align}
Therefore, (\ref{3.30}) follows from (\ref{zfg57xc}), (\ref{VI5}) and (\ref{VI6}).
\end{proof}

\section{The Local and global well-posedness}
If $u(t,x)$ solves (\ref{1}), then it is easy to see
\begin{align}\label{gv6}
\frac{\alpha }{{{\alpha ^2} + {\beta ^2}}}J{u_t} - \frac{\beta }{{{\alpha ^2} + {\beta ^2}}}{u_t} = J\tau (u).
\end{align}
In order to prove the local well-posedness in $\mathcal{H}^3$, we apply the approximating scheme introduced by McGahagan \cite{M}. The novel idea in \cite{M} is that one can use the wave map to approximate the Schr\"odinger map. For any $\delta>0$, we introduce the wave map model equation:
\begin{align}\label{h56}
\left\{ \begin{array}{l}
 \delta {\nabla _t}{\partial _t}u - \alpha \tau (u) + \frac{{\alpha \beta }}{{{\alpha ^2} + {\beta ^2}}}J {{u_t}} + \frac{{{\alpha^2}}}{{{\alpha ^2} + {\beta ^2}}}{u_t} = 0, \\
 u\left( {0,x} \right) = u_0 ,{\partial _t}u\left( {0,x} \right) = g_0^\delta , \\
 \end{array} \right.
\end{align}
where $u(t,x):[0,T]\times\Bbb H^2\to \Bbb H^2$ and $g_0^{\delta}(x)\in T_{u_0(x)}\mathbb{H}^2.$
Formally, we can view (\ref{h56}) as the approximate scheme for (\ref{1}) because of (\ref{gv6}). We remark that different choices of $g^{\delta}_0$ will give the same solution to (\ref{1}), thus it is unnecessary to specify a concrete $g^{\delta}_0$. The local well-posedness of (\ref{1}) is given by the following lemma.
\begin{lemma}\label{local}
Let $\alpha>0$, $\beta\in \Bbb R$. For any initial data $u_0\in \mathcal{H}^3$, there exists $T>0$ depending only on $\|u_0\|_{\mathcal{H}^3}$ such that (\ref{1}) has a unique local solution $u(t,x)\in C([0,T];\mathcal{H}^3)$.
\end{lemma}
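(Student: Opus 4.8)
The plan is to follow McGahagan's wave-map approximation scheme, adapted to the hyperbolic setting. First I would establish, for each fixed $\delta>0$, the local well-posedness of the quasilinear wave-type system (\ref{h56}) in $C([0,T_\delta];\mathcal{H}^3)$ with $\partial_t u\in C([0,T_\delta];\mathcal{H}^2)$, via a standard energy method: one differentiates (\ref{h56}), contracts with suitable covariant derivatives of $\partial_t u$, and uses the curvature identity (\ref{curvature2}), the geometry of $\Bbb H^2$ (bounded curvature), and the Sobolev embeddings of Lemma \ref{wusijue} together with the equivalence Lemma \ref{new} to control all nonlinear terms. The key point, as in \cite{M}, is that the choice of $g_0^\delta$ is made so that $\|g_0^\delta\|_{\mathcal{H}^2}$ stays bounded as $\delta\to0$ (e.g. by projecting the formal value $\alpha\tau(u_0)-\beta J\tau(u_0)$ suitably), and so that the energy estimates for (\ref{h56}) produce a time of existence $T$ and a bound on $\|u^\delta\|_{C([0,T];\mathcal{H}^3)}$ that are \emph{uniform in $\delta$}, depending only on $\|u_0\|_{\mathcal{H}^3}$.

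Next I would pass to the limit $\delta\to0$. From the uniform bounds, the family $\{u^\delta\}$ is bounded in $C([0,T];\mathcal{H}^3)$ and $\{\partial_t u^\delta\}$ is bounded in $C([0,T];\mathcal{H}^2)$; moreover, reading off $\delta\nabla_t\partial_t u^\delta$ from (\ref{h56}) shows that $\partial_t u^\delta$ is equicontinuous in a weaker topology, so by Aubin–Lions-type compactness (using the equivalence of intrinsic and extrinsic norms from Lemma \ref{new} to work with the vector-valued components $u^{\delta,j}-Q^j$ in ordinary Sobolev spaces $H^k(\Bbb H^2;\Bbb R)$) one extracts a subsequence converging strongly in $C([0,T];\mathcal{H}^{3-\varepsilon}_{loc})$ and weak-$*$ in the top norms. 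The limit $u$ satisfies (\ref{1}): the term $\delta\nabla_t\partial_t u^\delta\to0$ in a distributional sense because of the uniform $\mathcal{H}^2$ bound on $\partial_t u^\delta$ multiplied by $\delta$, and the remaining terms converge by the strong local convergence; finally $u\in C_w([0,T];\mathcal{H}^3)$, and a standard argument upgrades weak to strong continuity.

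For uniqueness I would argue directly on (\ref{1}): given two solutions $u,v\in C([0,T];\mathcal{H}^3)$ with the same data, I would estimate $\mathrm{dist}_{Q,1}(u,v)$ or an extrinsic difference in a lower-regularity norm (say $\mathcal{H}^1$ or $L^2$ of the component differences) by a Gronwall argument, using that both solutions have images in a fixed compact set (Remark \ref{jiujin}) so all Christoffel symbols and curvature terms are bounded, and using Lemma \ref{wusijue} to handle the nonlinear differences; this gives $u\equiv v$. I expect the main obstacle to be the uniform-in-$\delta$ energy estimate for (\ref{h56}) at the $\mathcal{H}^3$ level: the hyperbolic geometry forces one to carefully track the extrinsic components $u^j$ (where the metric coefficient $e^{-2x_2}$ and the Christoffel symbols (\ref{christ}) are unbounded on $\Bbb H^2$), and it is precisely here that Lemma \ref{new} and Remark \ref{jiujin} are indispensable — one must first bootstrap an $L^\infty$ bound on $u^2$ to know that the target geometry along the solution stays in a controlled region, and only then close the higher-order estimates.
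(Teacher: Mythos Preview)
Your proposal is correct and follows essentially the same route as the paper: McGahagan's wave-map approximation (\ref{h56}), uniform-in-$\delta$ energy estimates at three levels (the paper builds these from $|du|$, $|\tau(u)|$, $|\nabla\tau(u)|$ together with $\delta$-weighted norms of $\partial_t u$, invoking Lemma \ref{new} and Remark \ref{jiujin} exactly as you anticipate), then weak-$*$ compactness and uniqueness by McGahagan's argument. The one refinement worth flagging is that the paper exploits the \emph{sign} of the target curvature---non-positive, not merely bounded---to discard a curvature term and extract the crucial negative contribution in (\ref{x5}) that closes the second-order estimate.
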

\begin{proof}
Rigorously, the volume element of $\Bbb H^2$ should be written as ${\rm{dvol}}_h$, for convenience, we use $dx$ instead in the following proof. And we drop the symbol $\delta$ in $g^{\delta_0}$ sometimes. Fix some $\varepsilon>0$ sufficiently small.
In the coordinates given by (\ref{vg}), (\ref{h56}) can be written as the following semi-linear wave equation
\begin{align}\label{XV12}
&\left( {\delta \frac{{{\partial ^2}{u^k}}}{{\partial {t^2}}} + \overline \Gamma  _{ij}^k\frac{{\partial {u^i}}}{{\partial t}}\frac{{\partial {u^j}}}{{\partial t}} - \alpha {\Delta _{{\Bbb H^2}}}{u^k} - \alpha {h^{ij}}\overline \Gamma  _{mn}^k\frac{{\partial {u^m}}}{{\partial {x^i}}}\frac{{\partial {u^n}}}{{\partial {x^j}}} + \frac{{{\alpha ^2}}}{{{\alpha ^2} + {\beta ^2}}}\frac{{\partial {u^k}}}{{\partial t}}} \right)\frac{\partial }{{\partial {y_k}}} \\
&+ \frac{{\alpha \beta }}{{{\alpha ^2} + {\beta ^2}}}\frac{{\partial {u^i}}}{{\partial t}}J\frac{\partial }{{\partial {y_i}}} = 0.
\end{align}
Using the identity $J\frac{\partial}{\partial y_1}=e^{-y_2}\frac{\partial}{\partial y_2}$, the above equation can be further reduced to a semilinear equation. \\
{\bf Step 1. Local solution for approximate equations.} First notice that $\mathcal{H}^3$ is embedded to $L^{\infty}$  as illustrated in Remark \ref{jiujin}, we can prove the local well-posedness of (\ref{h56}) by the standard contradiction mapping argument. Moreover we can obtain the blow-up criterion: $T>0$ is the lifespan of $(\ref{h56})$ if and only if
$$
\mathop {\lim }\limits_{t \to T} {\left\| {u(t,x)} \right\|_{{\mathcal{H}^3}}} = \infty.
$$
{\bf Step 2. Uniform a-prior estimates for approximate equations.}
We claim that there exists a $T$ depending only on $\|u_0\|_{\mathcal{H}^3}$ such that for all $\delta>0$, $t\in[0,T]$ there exists a a-prior bound
\begin{align}\label{opk}
{\left\| {{u^\delta }(t,x)} \right\|_{{\mathcal{H}^3}}} < C,
\end{align}
for some $C>0$ independent of $\delta$.
Direct calculation gives the following identity which is useful to close the energy estimates
\begin{align}\label{star}
{\partial _t}u = \lambda \left( {{\alpha ^2}\tau (u) - \alpha \beta J\tau (u) - \delta \alpha {\nabla_t}{\partial _t}u + \delta \beta J{\nabla_t}{\partial _t}u} \right),
\end{align}
where $\lambda=\frac{{{\alpha ^2} + \beta {}^2}}{{{\alpha ^3} + {\beta ^2}\alpha }}>0$. And we will frequently use the identity
${\left\langle {JX,X} \right\rangle }=0$ and (\ref{curvature2}). Careful calculations with integration by parts imply for any $X\in u^*(T\Bbb H^2)$, one has
\begin{align}\label{1x}
\int_{{\Bbb H^2}} {\left\langle {\tau (u),X} \right\rangle } dx =  - \int_{{\Bbb H^2}} {{h^{ii}}\left\langle {{\partial _i}u,{\nabla _i}X} \right\rangle } dx.
\end{align}
Define the energy functional by
$$
E(u,\partial_t u)=\frac{\delta}{2}\int_{\Bbb H^2} |\partial_t u|^2dx+\frac{\alpha}{2}\int_{\Bbb H^2} |\partial_i u|^2h^{ii}dx.
$$
Then by (\ref{h56}), we have
$$\frac{d}{{dt}}E(u) =  - \frac{{{\alpha ^2}}}{{{\alpha ^2} + {\beta ^2}}}\left\| {{\partial _t}u} \right\|_{L_x^2}^2.
$$
Thus the energy is decreasing with respect to $t$ and
\begin{align}\label{xx1}
\int_{\Bbb H^2} |\nabla_i u|^2h^{ii}dx\lesssim \delta\|g^{\delta}_0\|^2_{L^2_x}+\|u_0\|^2_{\mathfrak{H}^1}.
\end{align}
Using (\ref{star}), we have
\begin{align}\label{x3}
\frac{1}{2}\frac{d}{{dt}}\left\| {{\partial _t}u} \right\|_{L_x^2}^2 = \lambda {\alpha ^2}\left\langle {{\nabla _t}{\partial _t}u,\tau (u)} \right\rangle  - \lambda \alpha \beta \left\langle {{\nabla _t}{\partial _t}u,J\tau (u)} \right\rangle  - \delta \alpha \lambda \left\langle {{\nabla _t}{\partial _t}u,{\nabla_t}{\partial _t}u} \right\rangle.
\end{align}
We can gain a key negative dominate term from the first term in the right hand side of (\ref{x3}).
In fact, by the non-positiveness of the sectional curvature of the target and expanding $\tau(u)$ to $tr(\nabla du)$, we have
\begin{align*}
\left\langle {{\partial _t}u,{\nabla _t}\tau (u)} \right\rangle  \le \left\langle {{\partial _t}u,{\nabla _i}{\nabla _i}{\partial _t}u} \right\rangle {h^{ii}} - \left\langle {{\partial _t}u,{\nabla _2}{\partial _t}u} \right\rangle.
\end{align*}
Then integration by parts shows
\begin{align}\label{x5}
\lambda {\alpha ^2}\int_0^t {\int_{{\Bbb H^2}} {\left\langle {{\nabla _t}{\partial _t}u,\tau (u)} \right\rangle } dxdt}  \le \lambda {\alpha ^2}\left( \int_{\Bbb H^2} {\left\langle {{\partial _t}u,\tau (u)} \right\rangle |_{t = 0}^{t = t}dx - \int^t_0\int_{{\Bbb H^2}} {\left\langle {{\nabla _i}{\partial _t}u,{\nabla _i}{\partial _t}u} \right\rangle {h^{ii}}dxds} } \right).
\end{align}
Similarly, one has
\begin{align}\label{x6}
\lambda {\alpha |\beta|}\int_0^t {\int_{{\Bbb H^2}} {\left\langle {{\nabla _t}{\partial _t}u,\tau (u)} \right\rangle } dxdt}  \le \lambda {\alpha |\beta|}\left(\int_{\Bbb H^2} {\left\langle {{\partial _t}u,J\tau (u)} \right\rangle |_{t = 0}^{t = t} dx+ C\int^t_0\int_{{\Bbb H^2}} {{{\left| {{\partial _t}u} \right|}^2}{{\left| {{\nabla _i}u} \right|}^2}{h^{ii}}dxds} } \right).
\end{align}
As a consequence of (\ref{x3}), (\ref{x5}) and (\ref{x6}), we obtain from H\"older inequality that
\begin{align}\label{x7}
\left\| {{\partial _t}u} \right\|_{L_x^2}^2 \le 4\lambda {\alpha ^2}{\left\| {{\partial _t}u} \right\|_{L_x^2}}{\left\| {\tau (u)} \right\|_{L_x^2}} + C{\left\| {g_0} \right\|_{L_x^2}}{\left\| {{u_0}} \right\|_{{\mathcal{H}^2}}} + C\int^t_0\left\| {d u} \right\|_{L_x^4}^2\left\| {{\partial _t}u} \right\|_{L_x^4}^2 - 2{\alpha ^2}\lambda \left\| {\nabla {\partial _t}u} \right\|_{L_x^2}^2ds.
\end{align}
Gagliardo-Nirenberg  inequality and diamagnetic inequality imply
$$\left\| {du} \right\|_{L_x^4}^2 \le {\left\| {du} \right\|_{L_x^2}}{\left\| {\nabla du} \right\|_{L_x^2}},\left\| {\partial_t u} \right\|_{L_x^4}^2 \le {\left\| {\nabla {\partial _t}u} \right\|_{L_x^2}}{\left\| {{\partial _t}u} \right\|_{L_x^2}}.$$
Therefore, Young's inequality and (\ref{x7}) yield
$$\left\| {{\partial _t}u} \right\|_{L_x^2}^2 \le C\left\| {\tau (u)} \right\|_{L_x^2}^2 + C{\left\| {g_0} \right\|_{L_x^2}}{\left\| {{u_0}} \right\|_{{\mathfrak{H}^2}}} + C\int_0^t {\left\| {{\partial _t}u} \right\|_{L_x^2}^2 + \left\| {\nabla du} \right\|_{L_x^2}^2}  + \left\| {du} \right\|_{L_x^2}^2ds.
$$
By (\ref{xx1}) and Gronwall inequality, we conclude that
\begin{align}\label{x21}
\left\| {{\partial _t}u} \right\|_{L_x^2}^2 \le C\|g_0\|^2_{L^2_x}+\int^t_0e^{C(t-s)}\|u(s)\|^2_{\mathfrak{H}^2}ds,
\end{align}
where $C$ is independent of $\delta$. Using the non-positiveness of the sectional curvature of the target, we have
\begin{align}\label{x22}
\|\nabla du\|^2_{L^2_x}\lesssim \|\tau(u)\|^2_{L^2_x} +\|du\|^2_{L^2_x}.
\end{align}
Define the second order energy functional by
$$
{E_2}(u,{\partial _t}u) = \frac{\alpha}{2}\int_{{\Bbb H^2}} {|\tau (u){|^2}dx}  + \frac{\delta }{2}\int_{{\Bbb H^2}} {|{\nabla _i}{\partial _t}u{|^2}{h^{ii}}} dx.
$$
Then by (\ref{h56}), we obtain
\begin{align}
&\frac{d}{{dt}}{E_2}(u,{\partial _t}u)\nonumber \\
&= \alpha\int_{{\Bbb H^2}} {\left\langle {{\nabla _t}\tau (u),\tau (u)} \right\rangle dx + } \delta \int_{{\Bbb H^2}} {\left\langle {{\nabla _t}{\nabla _i}{\partial _t}u,{\nabla _i}{\partial _t}u} \right\rangle {h^{ii}}} dx \nonumber\\
&\le \alpha{\int _{{\Bbb H^2}}}\left\langle {{\nabla _t}\tau (u),\tau (u)} \right\rangle dx + \delta \int_{{\Bbb H^2}} {\left\langle {{\nabla _i}{\nabla _t}{\partial _t}u,{\nabla _i}{\partial _t}u} \right\rangle {h^{ii}}} dx + C\delta\int_{{\Bbb H^2}} {{{\left| {{\partial _t}u} \right|}^2}\left| {{\nabla _i}{\partial _t}u} \right|\left| {du} \right|} dx.\label{xx4}
\end{align}
Applying (\ref{h56}) again for the second term in (\ref{xx4}) yields
\begin{align}
&\delta \int_{{\Bbb H^2}} {\left\langle {{\nabla _i}{\nabla _t}{\partial _t}u,{\nabla _i}{\partial _t}u} \right\rangle {h^{ii}}} dx \nonumber\\
&= \alpha {\int _{{\Bbb H^2}}}\left\langle {{\nabla _i}\tau (u),{\nabla _i}{\partial _t}u} \right\rangle {h^{ii}}dx - \frac{{{\alpha ^2}}}{{{\alpha ^2} + {\beta ^2}}}\int_{{\Bbb H^2}} {\left\langle {{\nabla _i}{\partial _t}u,{\nabla _i}{\partial _t}u} \right\rangle {h^{ii}}} dx \nonumber\\
&\le -\alpha {\int _{{\Bbb H^2}}}\left\langle {\tau (u),{\nabla _t}\tau (u)} \right\rangle dx - \frac{{{\alpha ^2}}}{{{\alpha ^2} + {\beta ^2}}}\int_{{\Bbb H^2}} {\left\langle {{\nabla _i}{\partial _t}u,{\nabla _i}{\partial _t}u} \right\rangle {h^{ii}}} dx \nonumber\\
&+ C\int_{{\Bbb H^2}} {\left| {{\partial _t}u} \right|\left| {\tau (u)} \right|{{\left| {du} \right|}^2}} dx + C\int_{{\Bbb H^2}} {\left| {\nabla {\partial _t}u} \right|\left| {\tau (u)} \right|} dx.\label{xx7}
\end{align}
Thus we have from (\ref{xx4}), (\ref{xx7}) that
\begin{align*}
\frac{d}{{dt}}{E_2}(u,{\partial _t}u) &\le  - \frac{{{\alpha ^2}}}{{{\alpha ^2} + {\beta ^2}}}\int_{{\Bbb H^2}} {\left\langle {{\nabla _i}{\partial _t}u,{\nabla _i}{\partial _t}u} \right\rangle {h^{ii}}} dx + C\delta\int_{{\Bbb H^2}} {{{\left| {{\partial _t}u} \right|}^2}\left| {{\nabla}{\partial _t}u} \right|\left| {du} \right|} dx \\
&+ C\int_{{\Bbb H^2}} {\left| {{\partial _t}u} \right|\left| {\tau (u)} \right|{{\left| {du} \right|}^2}} dx + C\int_{{\Bbb H^2}} {\left| {\nabla {\partial _t}u} \right|\left| {\tau (u)} \right|} dx.
\end{align*}
Then Young's inequality and Gagliardo-Nirenberg  inequality show
\begin{align}
\frac{d}{{dt}}{E_2}(u,{\partial _t}u) &\le \int_{{\Bbb H^2}} {\left( {{\delta ^4}{{\left| {{\partial _t}u} \right|}^8} + {{\left| {du} \right|}^4} + {{\left| {\tau (u)} \right|}^2}} \right)} dx + \left( {\int_{{\Bbb H^2}} {{{\left| {{\partial _t}u} \right|}^2}dx} } \right)\left( {\int_{{\Bbb H^2}} {{{\left| {du} \right|}^8}dx} } \right) \nonumber\\
&\lesssim {E_2}{(u,{\partial _t}u)^4} + \left\| u \right\|_{{\mathfrak{H}^2}}^4 + \left\| u \right\|_{{\mathfrak{H}^2}}^2 + \left\| u \right\|_{{\mathfrak{H}^2}}^8\left\| {{\partial _t}u} \right\|_{L_x^2}^2.\label{xxx1}
\end{align}
By (\ref{x22}), (\ref{xx1}), we see that
\begin{align}\label{xxx4}
\left\| u \right\|^2_{{\mathfrak{H}^2}}\lesssim {E_2}(u,{\partial _t}u)+E(u_0).
\end{align}
Hence (\ref{x22}), (\ref{xxx1}) give
$${E_2}(u,{\partial _t}u) \le \int_0^t {F\left( {{E_2}(u,{\partial _t}u) + E({u_0})} \right)ds}  + {E_2}({u_0},{g_0}),
$$
where $F:\Bbb R\to \Bbb R^+$ is some $C^2$ function.
Then we have from Gronwall inequality that there exists $T>0$ such that for all $t\in[0,T]$
$${E_2}(u,{\partial _t}u)\le C(T),$$
which combined with (\ref{xxx4}) yields
\begin{align*}
\|u\|^2_{\mathfrak{H}^2}\lesssim C(T).
\end{align*}
Define the third order energy functional by
\begin{align}
E_3(u,\partial_t u)=\frac{\alpha}{2}\int_{\Bbb H^2}|\nabla \tau(u)|^2dx+\frac{\delta}{2}\int_{\Bbb H^2} |\nabla^2 \partial_t u|^2dx.
\end{align}
Before calculating the differentiation with respect to $t$ of $E_3$, we first point out a useful inequality which can be verified by integration by parts
\begin{align}\label{V6}
\|\nabla^2 du\|^2_{L^2_x}\lesssim \|\nabla \tau(u)\|^2_{L^2_x}+ \|du\|^6_{L^6_x}+\|\nabla d u\|^2_{L^4_x}\|d u\|^2_{L^4_x}+\|u \|^3_{\mathfrak{H}^2}.
\end{align}
Thus (\ref{V6}), Gagliardo-Nirenberg  inequality and Young inequality further yield
\begin{align}\label{equil}
\|\nabla^2 du\|^2_{L^2_x}\lesssim \mathcal{P}(\|u \|^2_{\mathfrak{H}^2})+\|\nabla \tau(u)\|^2_{L^2_x},
\end{align}
where $\mathcal{P}(x)$ is some polynomial.
Integration by parts and (\ref{h56}) give
\begin{align}
 &\frac{d}{{dt}}{E_3}(u,{\partial _t}u) \\
 &\le  - \frac{{{\alpha ^2}}}{{{\alpha ^2} + {\beta ^2}}}{\int_{{\Bbb H^2}} {\left| {{\nabla ^2}{\partial _t}u} \right|} ^2}dx + \delta \int_{{\Bbb H^2}} {{{\left| {{\partial _t}u} \right|}^2}{{\left| {du} \right|}^2}\left| {{\nabla ^2}{\partial _t}u} \right|} dx + \delta \int_{{\Bbb H^2}} {{{\left| {{\partial _t}u} \right|}^2}\left| {du} \right|\left| {{\nabla ^2}{\partial _t}u} \right|} dx \nonumber\\
 &+ \delta \int_{{\Bbb H^2}} {\left| {{\partial _t}u} \right|\left| {du} \right|\left| {{\nabla ^2}{\partial _t}u} \right|} \left| {\nabla {\partial _t}u} \right|dx + C\int_{{\Bbb H^2}} {\left| {{\partial _t}u} \right|\left| {du} \right|\left| {{\nabla ^2}u} \right|} \left| {\nabla \tau (u)} \right|dx \nonumber\\
 &+ C\int_{{\Bbb H^2}} {\left| {{\partial _t}u} \right|{{\left| {du} \right|}^3}} \left| {\nabla \tau (u)} \right|dx + C\int_{{\Bbb H^2}} {\left| {\nabla {\partial _t}u} \right|{{\left| {du} \right|}^2}} \left| {\nabla \tau (u)} \right|dx \nonumber\\
 &+ C\int_{{\Bbb H^2}} {\left| {{\partial _t}u} \right|\left| {du} \right|} \left| {\nabla \tau (u)} \right|\left| {\tau (u)} \right|dx.\label{XV1}
\end{align}
By Gagliardo-Nirenberg  inequality and Young inequality, we have
\begin{align*}
\delta \int_{{\Bbb H^2}} {{{\left| {{\partial _t}u} \right|}^2}{{\left| {du} \right|}^2}\left| {{\nabla ^2}{\partial _t}u} \right|} dx &\le {E_3}(u,{\partial _t}u) + \int_{{\Bbb H^2}} {{{\left| {{\partial _t}u} \right|}^4}{{\left| {du} \right|}^4}dx}  \\
&\le {E_3}(u,{\partial _t}u) + \left\| {du} \right\|_{L_x^\infty }^8\left\| {{\partial _t}u} \right\|_{L_x^2}^6 + \varepsilon \int_{{\Bbb H^2}} {{{\left| {{\nabla ^2}{\partial _t}u} \right|}^2}dx}.
\end{align*}
Similarly, we have
\begin{align*}
 \delta \int_{{\Bbb H^2}} {{{\left| {{\partial _t}u} \right|}^2}\left| {du} \right|\left| {{\nabla ^2}{\partial _t}u} \right|} dx &\le {E_3}(u,{\partial _t}u) + \left\| {du} \right\|_{L_x^\infty }^4\left\| {{\partial _t}u} \right\|_{L_x^2}^6 + \varepsilon \int_{{\Bbb H^2}} {{{\left| {{\nabla ^2}{\partial _t}u} \right|}^2}dx} \\
 \delta \int_{{\Bbb H^2}} {\left| {{\partial _t}u} \right|\left| {du} \right|\left| {{\nabla ^2}{\partial _t}u} \right|} \left| {\nabla {\partial _t}u} \right|dx &\le {E_3}(u,{\partial _t}u) + \varepsilon \int_{{\Bbb H^2}} {{{\left| {{\nabla ^2}{\partial _t}u} \right|}^2}dx}  + \left\| {du} \right\|_{L_x^\infty }^8\left\| {{\partial _t}u} \right\|_{L_x^2}^6 \\
 \int_{{\Bbb H^2}} {\left| {{\partial _t}u} \right|\left| {du} \right|\left| {{\nabla ^2}u} \right|} \left| {\nabla \tau (u)} \right|dx &\le {E_3}(u,{\partial _t}u) + \varepsilon \int_{{\Bbb H^2}} {{{\left| {{\nabla ^2}{\partial _t}u} \right|}^2}dx}  + \left\| {du} \right\|_{L_x^\infty }^8\left\| {{\partial _t}u} \right\|_{L_x^2}^6 + \left\| u \right\|_{{\mathfrak{H}^3}}^4 \\
 \int_{{\Bbb H^2}} {\left| {\nabla {\partial _t}u} \right|{{\left| {du} \right|}^2}} \left| {\nabla \tau (u)} \right|dx &\le {E_3}(u,{\partial _t}u) + \varepsilon \int_{{\Bbb H^2}} {{{\left| {{\nabla ^2}{\partial _t}u} \right|}^2}dx}  + \left\| {du} \right\|_{L_x^4}^4\left\| {{\partial _t}u} \right\|_{L_x^2}^2 \\
 \int_{{\Bbb H^2}} {\left| {{\partial _t}u} \right|\left| {du} \right|} \left| {\nabla \tau (u)} \right|\left| {\tau (u)} \right|dx &\le {E_3}(u,{\partial _t}u) + \varepsilon \int_{{\Bbb H^2}} {{{\left| {{\nabla ^2}{\partial _t}u} \right|}^2}dx}  + \left\| u \right\|_{{\mathfrak{H}^3}}^4.
 \end{align*}
Therefore,we conclude from (\ref{XV1}) and (\ref{equil})  that
$${E_3}(u,{\partial _t}u) \le \int_0^t {G\left( {{E_3}(u,{\partial _t}u) + {E_2}(u,{\partial _t}u) + E({u_0},{g_0})} \right)ds}  + {E_3}({u_0},{g_0}) + {E_2}({u_0},{g_0}) + E({u_0},{g_0}),
$$
where $G:\Bbb R\to\Bbb R^+$ is a $C^2$ function. Thus by Gronwall inequality, we have there exists $T_1>0$ such that for any $\delta>0$, $t\in[0,T_1]$ such that (\ref{h56}) has a local solution in $L^{\infty}([0,T_1];\mathfrak{H}^3)$ and
$$
\|u^{\delta}\|_{\mathfrak{H}^3}\le C(T_1).
$$
By Lemma \ref{new}, this implies the uniform a-prior bound for $\|u^{\delta}\|_{\mathcal{H}^3}$. Now letting $\delta\to 0$, we have a sequence of solutions $u^{\delta_n}$ of (\ref{h56}), which converge to $u\in L^{\infty}([0,T];\mathcal{H}^3)$ in the $weak^*$ sense.
The limit map $u$ satisfies (\ref{1}) a.e. on $[0,T_1]\times \Bbb H^2$. The proof for this fact is standard as shown in \cite{M}. Now we prove the regularity of $u$ with respect to $t$. Integrating (\ref{XV1}) with respect to $t$ in $[0,T]$ yields $\partial_tu\in L^2([0,T];\mathfrak{H}^2)$ and $u\in L^2([0,T];\mathcal{H}^4)$. Thus by reducing these bounds to $(u^1,u^2)$, the standard argument implies $(u^1,u^2)\in C([0,T];H^3(\Bbb H^2;\Bbb R^2))$. Thus we conclude $u\in C([0,T];\mathcal{H}^3)$. The proof of the uniqueness is highly standard by \cite{M}.
\end{proof}

The global well-posedness is given by the following proposition.
\begin{proposition}\label{global}
Let $\alpha>0$, $\beta\in \Bbb R$. For any initial data $u_0\in \mathcal{H}^3$, there exists a unique global solution $u\in C([0,T],\mathcal{H}^3)$ of (\ref{1}) for all $T>0$. Furthermore, we have $\|u\|_{\mathfrak{H}^2}\lesssim C(u_0)$.
\end{proposition}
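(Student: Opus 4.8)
The plan is to combine the local theory and its blow--up criterion (Lemma \ref{local}) with a hierarchy of energy estimates for (\ref{1}). By Lemma \ref{local} the solution lives on a maximal interval $[0,T^*)$, and since the existence time depends only on $\|u_0\|_{\mathcal H^3}$, global existence follows once we bound $\|u(t)\|_{\mathcal H^3}$ on $[0,T^*)$ whenever $T^*<\infty$; by the equivalence Lemma \ref{new} it suffices to control $\|u(t)\|_{\mathfrak H^k}$ for $k\le 3$. First I would record the energy identity: differentiating $\tfrac12\|du\|_{L^2_x}^2$ along (\ref{1}) and using $\langle JX,X\rangle=0$ gives
\[
\frac{d}{dt}\,\tfrac12\|du(t)\|_{L^2_x}^2=-\alpha\|\tau(u)\|_{L^2_x}^2\le 0 ,
\]
so $\|du(t)\|_{L^2_x}\le\|du_0\|_{L^2_x}$ for all $t$, $\int_0^\infty\|\tau(u(t))\|_{L^2_x}^2\,dt\le\tfrac1{2\alpha}\|du_0\|_{L^2_x}^2$, and $|\partial_t u|^2=(\alpha^2+\beta^2)|\tau(u)|^2$ pointwise.

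For the second order bound set $y(t):=\|\partial_t u(t)\|_{L^2_x}^2$. Using $\nabla_t\partial_i u=\nabla_i\partial_t u$, the Bochner--type identity $\nabla_t\tau(u)=h^{ij}(\nabla_i\nabla_j-\Gamma^k_{ij}\nabla_k)\partial_t u+h^{ij}\mathbf R(\partial_t u,\partial_i u)\partial_j u$, the relation $\tau(u)=\tfrac1{\alpha^2+\beta^2}(\alpha\,\partial_t u+\beta J\partial_t u)$, integration by parts, and the algebraic identities $\langle JX,JY\rangle=\langle X,Y\rangle$ and $\langle JX,Y\rangle=-\langle X,JY\rangle$, the principal part of $\tfrac{d}{dt}\tfrac12 y=(\alpha^2+\beta^2)\int_{\mathbb H^2}\langle\nabla_t\tau(u),\tau(u)\rangle\,dx$ collapses to $-\alpha\|\nabla\partial_t u\|_{L^2_x}^2$; the curvature contribution, computed from (\ref{curvature2}) with the constant curvature $-1$ of the target and $\langle\partial_t u,\tau(u)\rangle=\alpha|\tau(u)|^2\ge 0$, splits into the manifestly non--positive term $-\alpha\int_{\mathbb H^2}|du|^2|\tau(u)|^2\,dx$ and a remainder bounded by $C\int_{\mathbb H^2}|du|^2|\partial_t u|^2\,dx$. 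Estimating $\int|du|^2|\partial_t u|^2\le\|du\|_{L^4_x}^2\|\partial_t u\|_{L^4_x}^2$ by Gagliardo-Nirenberg and the diamagnetic inequality, using $\|\nabla du\|_{L^2_x}\lesssim\|\tau(u)\|_{L^2_x}+\|du\|_{L^2_x}$ (inequality (\ref{x22})), and absorbing $\|\nabla\partial_t u\|_{L^2_x}^2$ into the good term by Young's inequality, I obtain a differential inequality of the form
\[
\frac{d}{dt}\,y+c\,\|\nabla\partial_t u\|_{L^2_x}^2\le C\bigl(\|du_0\|_{L^2_x}\bigr)\,(y^2+y).
\]
On any finite $[0,T]$, Gr\"onwall's inequality then yields $y(t)\le C(\|u_0\|_{\mathcal H^3},T)$ and hence, via (\ref{x22}), a bound for $\|u(t)\|_{\mathfrak H^2}$ on $[0,T]$.

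The third order bound follows the same route, differentiating a functional modelled on $E_3$ from the proof of Lemma \ref{local} --- for instance $\tfrac12\|\nabla\tau(u)\|_{L^2_x}^2$, or $\tfrac12\|\nabla^2 du\|_{L^2_x}^2$ --- and producing a differential inequality whose coefficients are polynomials in $\|u\|_{\mathfrak H^2}$; with the second order bound in hand, Gr\"onwall closes it on every $[0,T]$. By Lemma \ref{new} this bounds $\|u(t)\|_{\mathcal H^3}$ on every finite interval, so if $T^*<\infty$ the norm would stay bounded up to $T^*$, contradicting the blow--up criterion; hence $T^*=\infty$ and $u$ is global.

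Finally, to upgrade the second order bound to the uniform estimate $\|u\|_{\mathfrak H^2}\lesssim C(u_0)$ I would exploit the global integrability $\int_0^\infty y(t)\,dt\le\tfrac{\alpha^2+\beta^2}{2\alpha}\|du_0\|_{L^2_x}^2<\infty$ together with the differential inequality $\tfrac{d}{dt}y\le C(y^2+y)$: if $y(t_n)\ge\delta$ along some sequence $t_n\to\infty$, then running this inequality backwards in time forces $y\gtrsim\delta$ on an interval of a fixed length $c(\delta)>0$ to the left of each $t_n$, and after passing to a subsequence these intervals are disjoint, contradicting $\int_0^\infty y<\infty$; hence $y(t)\to 0$ as $t\to\infty$. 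Combined with the Gr\"onwall bound on a fixed finite interval $[0,T_1]$ this gives $\sup_{t\ge 0}\|\partial_t u(t)\|_{L^2_x}\le C(u_0)$, whence $\|\tau(u(t))\|_{L^2_x}$ and, by (\ref{x22}) and the first order bound, $\|u(t)\|_{\mathfrak H^2}$ are bounded uniformly in $t$. I expect the main difficulty to be the second paragraph --- arranging the signs in the differentiated second order energy so that both the leading term and the worst curvature term are non--positive --- together with this last $y(t)\to 0$ argument, which is precisely what makes the $\mathfrak H^2$ bound global rather than merely local in time.
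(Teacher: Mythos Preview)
Your overall scheme --- the energy hierarchy $\|du\|_{L^2}^2$, $y(t)=\|\partial_t u\|_{L^2}^2$, then a third--order functional, combined with (\ref{x22}) and the equivalence Lemma~\ref{new} --- is exactly the paper's, and your derivation of the differential inequality
\[
\frac{d}{dt}\,y + c\,\|\nabla\partial_t u\|_{L^2}^2 \le C\bigl(\|du_0\|_{L^2}\bigr)\,(y^2+y)
\]
is correct. The gap is in how you integrate it. From a Riccati--type inequality $y'\le C(y^2+y)$ you \emph{cannot} conclude ``on any finite $[0,T]$, Gr\"onwall gives $y(t)\le C(\|u_0\|,T)$'': the comparison ODE $z'=C(z^2+z)$ blows up at time $\tfrac{1}{C}\ln(1+1/z(0))$, so the bound you obtain is only valid on an interval whose length depends on $y(0)$, not on an arbitrary $[0,T]$. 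Consequently you do not yet have a $\mathfrak H^2$ bound up to the maximal time $T^*$, the third--order step cannot be launched, and your final paragraph (which needs $y$ globally defined to speak of $t_n\to\infty$) is circular.

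The paper's remedy is a one--line change: rewrite the same inequality as $(y+1)'\le C\,y\,(y+1)$ and treat $Cy$ as a \emph{coefficient}. Since $\int_0^{T^*} y\,dt\le \tfrac{\alpha^2+\beta^2}{2\alpha}\|du_0\|_{L^2}^2$ is already known from the first energy identity, the \emph{linear} Gr\"onwall inequality gives immediately
\[
y(t)+1\le \bigl(y(0)+1\bigr)\exp\Bigl(C\!\int_0^{t} y\Bigr)\le C(u_0),
\]
a bound that is uniform in $t$ on the whole maximal interval. This simultaneously (i) closes the $\mathfrak H^2$ estimate needed to run the third--order step and obtain global existence, and (ii) delivers the uniform bound $\|u\|_{\mathfrak H^2}\lesssim C(u_0)$ directly --- so your final $y(t)\to 0$ argument, while correct once global boundedness is known, becomes unnecessary for the proposition.
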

\begin{proof} By the local well-posedness in Lemma \ref{local}, it suffices to obtain a uniform bound for $\|u\|_{\mathcal{H}^3}$ with respect to $t\in[0,T]$. As before we introduce three energy functionals:
\begin{align*}
{\mathcal{E}_1}(u) = \frac{1}{2}\int_{{\Bbb H^2}} {{{\left| {\nabla u} \right|}^2}} dx,\mbox{  }{\mathcal{E}_2}(u) = \frac{1}{2}\int_{{\Bbb H^2}} {{{\left| {{\partial _t}u} \right|}^2}} {\rm{dvol_h}},\mbox{  }{\mathcal{E}_3}(u) = \frac{1}{2}\int_{{\Bbb H^2}} {{{\left| {\nabla {\partial _t}u} \right|}^2}} {\rm{dvol_h}}.
\end{align*}
By integration by parts and (\ref{1}), we have
\begin{align*}
\frac{d}{{dt}}{\mathcal{E}_1}(u) =  - \alpha \int_{{\Bbb H^2}} {{{\left| {\tau (u)} \right|}^2}} {\rm{dvol_h}}.
\end{align*}
Thus the energy is decreasing with respect to $t$ and
\begin{align}\label{V1}
\|du\|^2_{L^2_x}+\alpha\int^t_0\|\partial_t u\|^2_{L^2_x}\le \mathcal{E}_1(u_0).
\end{align}
Since $u$ satisfies (\ref{1}), then
\begin{align}
 \int_{{\Bbb H^2}} {{{\left| {{\partial _t}u} \right|}^2}} dx &= \left( {{\alpha ^2} + {\beta ^2}} \right)\int_{{\Bbb H^2}} {{{\left| {\tau (u)} \right|}^2}} dx \label{V2}\\
 \int_{{\Bbb H^2}} {{{\left| {\nabla {\partial _t}u} \right|}^2}} dx &= \left( {{\alpha ^2} + {\beta ^2}} \right)\int_{{\Bbb H^2}} {{{\left| {\nabla \tau (u)} \right|}^2}} dx \label{V3}
\end{align}
By (\ref{1}) and integration by parts, one has
\begin{align}
 &\frac{d}{{dt}}{\mathcal{E}_2}(u) = \int_{{\Bbb H^2}} {\left\langle {{\nabla _t}{\partial _t}u,{\partial _t}u} \right\rangle } dx \nonumber\\
 &= \alpha \int_{{\Bbb H^2}} {\left\langle {{\nabla _t}\tau (u),{\partial _t}u} \right\rangle } dx + \beta \int_{{\Bbb H^2}} {\left\langle {J{\nabla _t}\tau (u),{\partial _t}u} \right\rangle }  \nonumber\\
 &\le  - \alpha \int_{{\Bbb H^2}} {\left\langle {\nabla {\partial _t}(u),\nabla {\partial _t}u} \right\rangle } dx + C\int_{{\Bbb H^2}} {{{\left| {du} \right|}^2}{{\left| {{\partial _t}u} \right|}^2}dx}.\label{j089}
\end{align}
H\"older and Gagliardo-Nirenberg inequality imply for some fixed sufficiently small $\varepsilon>0$
$$\int_{{\Bbb H^2}} {{{\left| {du} \right|}^2}{{\left| {{\partial _t}u} \right|}^2}dx}  \le {\left\| {\nabla du} \right\|_{L_x^2}}{\left\| {du} \right\|_{L_x^2}}{\left\| {\nabla {\partial _t}u} \right\|_{L_x^2}}{\left\| {{\partial _t}u} \right\|_{L_x^2}} \le {\mathcal{E}_1}\left( {{u_0}} \right)\left\| {\nabla du} \right\|_{L_x^2}^2\left\| {{\partial _t}u} \right\|_{L_x^2}^2 + \varepsilon \left\| {\nabla {\partial _t}u} \right\|_{L_x^2}^2.
$$
Therefore, we deduce from (\ref{j089}) that
$$\frac{d}{{dt}}{\mathcal{E}_2}(u) \le {\mathcal{E}_1}\left( {{u_0}} \right)\left\| {\nabla du} \right\|_{L_x^2}^2\left\| {{\partial _t}u} \right\|_{L_x^2}^2.
$$
Then applying (\ref{x22}), (\ref{V2}), we have
\begin{align}
\frac{d}{{dt}}{\mathcal{E}_2}(u) \le {\mathcal{E}_1}\left( {{u_0}} \right)\left( {{\mathcal{E}_2}(u) + \mathcal{E}_1({u_0})} \right)\left\| {{\partial _t}u} \right\|_{L_x^2}^2.
\end{align}
Gronwall inequality shows
$${\mathcal{E}_2}(u(t)) + {\mathcal{E}_1}\left( {{u_0}} \right) \le \left( {\left\| {{u_0}} \right\|_{{H^2}}^2 + {\mathcal{E}_1}\left( {{u_0}} \right)} \right){e^{C{\mathcal{E}_1}\left( {{u_0}} \right)\int_0^t {\left\| {{\partial _t}u} \right\|_{L_x^2}^2d\tau } }}.
$$
Thus by (\ref{V1}), we obtain
$${\mathcal{E}_2}(u(t))\le C(\|u_0\|_{\mathfrak{H}^2}).$$
Again using (\ref{x22}), (\ref{V2}), we conclude
\begin{align}\label{V5}
\|u(t)\|_{\mathfrak{H}^2}+\alpha\int^t_0\|\nabla\partial_t u\|^2_{L^2_x}ds\le C(\|u_0\|_{\mathfrak{H}^2}).
\end{align}
Integration by parts and (\ref{1}) yield
\begin{align*}
 \frac{d}{{dt}}{\mathcal{E}_3}(u(t)) &\le  - \alpha \int_{{\Bbb H^2}} \big({{{\left| {{\nabla ^2}{\partial _t}u} \right|}^2}}  + C\left| {\nabla u} \right|\left| {\nabla {\partial _t}u} \right|{\left| {{\partial _t}u} \right|^2} + C\left| {\nabla u} \right|\left| {\nabla {\partial _t}u} \right|{\left| {{\partial _t}u} \right|^2} \big)dx\\
 &+ C\int_{\Bbb H^2}\big({\left| {\nabla u} \right|^3}\left| {{\partial _t}u} \right|\left| {\nabla {\partial _t}u} \right| + C\left| {{\nabla ^2}u} \right|\left| {\nabla u} \right|\left| {{\partial _t}u} \right|\left| {\nabla {\partial _t}u} \right| + C{\left| {{\partial _t}u} \right|^2}{\left| {\nabla u} \right|^4}\big)dx \\
 &+ C\int_{\Bbb H^2}\big({\left| {\nabla {\partial _t}u} \right|^2}{\left| {\nabla u} \right|^2} + C{\left| {\nabla u} \right|^2}\left| {{\nabla ^2}{\partial _t}u} \right|\left| {{\partial _t}u} \right|\big)dx.
\end{align*}
By Gagliardo-Nirenberg  inequality and Young inequality, we see
\begin{align*}
 \int_{{\Bbb H^2}} {\left| {\nabla u} \right|\left| {\nabla {\partial _t}u} \right|{{\left| {{\partial _t}u} \right|}^2}dx}&\le \left\| {{\partial _t}u} \right\|_{L_x^2}{\left\| {du} \right\|_{L_x^2}^{\frac{1}{2}}}\left\| {\nabla {\partial _t}u} \right\|_{L_x^2}^{3/2}\left\| {{\nabla ^2}{\partial _t}u} \right\|_{L_x^2}^{\frac{1}{2}}\left\| {\nabla du} \right\|_{L_x^2}^{\frac{1}{2}} \\
 &\le C({u_0})\left\| {\nabla {\partial _t}u} \right\|_{L_x^2}^2 + \varepsilon \left\| {{\nabla ^2}{\partial _t}u} \right\|_{L_x^2}^2.
\end{align*}
Similarly we have
\begin{align*}
 \int_{{\Bbb H^2}} {{{\left| {du} \right|}^3}\left| {{\partial _t}u} \right|\left| {\nabla {\partial _t}u} \right|dx}  &\le \left\| {{\nabla ^2}{\partial _t}u} \right\|_{L_x^2}^{\frac{1}{2}}\left\| {\nabla {\partial _t}u} \right\|_{L_x^2}^{\frac{1}{2}}{\left\| {{\partial _t}u} \right\|_{L_x^2}}\left\| {du} \right\|_{L_x^{12}}^3 \\
 &\le C({u_0})\left\| {\nabla {\partial _t}u} \right\|_{L_x^2}^2 + \varepsilon \left\| {{\nabla ^2}{\partial _t}u} \right\|_{L_x^2}^2 \\
 \int_{{\Bbb H^2}} {\left| {\nabla du} \right|\left| {du} \right|\left| {{\partial _t}u} \right|\left| {\nabla {\partial _t}u} \right|} dx &\le \left\| {{\nabla ^2}{\partial _t}u} \right\|_{L_x^2}^{\frac{1}{2}}{\left\| {\nabla {\partial _t}u} \right\|_{L_x^2}}\left\| {{\nabla ^2}du} \right\|_{L_x^2}^{\frac{1}{2}}\left\| {\nabla du} \right\|_{L_x^2}^{\frac{1}{2}}{\left\| {du} \right\|_{L_x^4}}\left\| {{\partial _t}u} \right\|_{L_x^2}^{\frac{1}{2}} \\
 &\le C({u_0})\left\| {\nabla {\partial _t}u} \right\|_{L_x^2}^2\left\| {{\nabla ^2}du} \right\|_{L_x^2}^2 + \varepsilon \left\| {{\nabla ^2}{\partial _t}u} \right\|_{L_x^2}^2.
 \end{align*}
 The remaining three terms are easier to bound:
 \begin{align*}
 \int_{{\Bbb H^2}} {{{\left| {{\partial _t}u} \right|}^2}{{\left| {\nabla u} \right|}^4}} dx &\le C({u_0})\left\| {\nabla {\partial _t}u} \right\|_{L_x^2}^2 \\
 \int_{{\Bbb H^2}} {{\left| {\nabla {\partial _t}u} \right|}^2}{{\left| {\nabla u} \right|}^2}dx &\le \varepsilon \left\| {{\nabla ^2}{\partial _t}u} \right\|_{L_x^2}^2 + C({u_0}) \\
 \int_{{\Bbb H^2}} {{{\left| {\nabla u} \right|}^2}\left| {{\nabla ^2}{\partial _t}u} \right|\left| {{\partial _t}u} \right|} dx &\le \varepsilon \left\| {{\nabla ^2}{\partial _t}u} \right\|_{L_x^2}^2 + C({u_0})\left\| {\nabla {\partial _t}u} \right\|_{L_x^2}^2.
 \end{align*}
 Thus we conclude that
 $$
\frac{d}{{dt}}\left\| {\nabla {\partial _t}u} \right\|_{L_x^2}^2 \lesssim C({u_0})\left( {1 + \left\| {{\nabla ^2}du} \right\|_{L_x^2}^2} \right)\left\| {\nabla {\partial _t}u} \right\|_{L_x^2}^2.
$$
Then Gronwall inequality, (\ref{V5}) and (\ref{V6}) imply
\begin{align}
\left\| {\nabla {\partial _t}u(t)} \right\|_{L_x^2}^2 &\le \left\| {\nabla {\partial _t}u(0)} \right\|_{L_x^2}^2{e^{C({u_0})\left( {\int_0^t {(1 + \left\| {{\nabla ^2}du} \right\|_{L_x^2}^2)ds} } \right)}} \nonumber\\
&\lesssim \left\| {\nabla {\partial _t}u(0)} \right\|_{L_x^2}^2{e^{C({u_0})t}}.\label{keypoint}
\end{align}
Therefore we obtain again from (\ref{V5}) and (\ref{V6}) that
$$\|u(t)\|_{\mathfrak{H}^3}\le C({u_0},{e^{Ct}}).
$$
By Lemma \ref{new}, we conclude $\|u(t)\|_{\mathcal{H}^3}\le Y(t)$ for some continuous function $Y:\Bbb R\to \Bbb R^+$. By the local well-posedness, $u(t)$ exists globally in $[0,\infty)\times \Bbb H^2$.
\end{proof}

Proposition \ref{global} has an important corollary which reveals the nonlinear smoothing effect for the heat tension field $\tau(u)$ due to the dissipative nature of $(\ref{1}).$ Indeed, (\ref{V5}) shows
\begin{corollary}\label{4.1a}
Let $\alpha>0$, $\beta\in \Bbb R$, and $u$ be a solution to (\ref{1}) in $C(\Bbb R^+;\mathcal H^3)$, then we have
\begin{align}
\int_0^\infty  {\left\| {\nabla \partial_t u} \right\|_{L_x^2}^2} dt+\int_0^\infty  \|\partial_tu\|^2_{L^2_x}dt \le C({\left\| {{u_0}} \right\|_{{\mathfrak{H}^2}}}).
\end{align}
\end{corollary}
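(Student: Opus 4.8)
The plan is to read both space--time integrals off the energy inequalities already established in the proof of Proposition \ref{global}; no new estimate is required, which is why the corollary is stated as an immediate consequence of (\ref{V5}). For the second integral, the basic energy identity gives $\|du(t)\|_{L^2_x}^2 + \alpha\int_0^t\|\partial_t u\|_{L^2_x}^2\,ds \le \mathcal{E}_1(u_0)$, which is precisely (\ref{V1}). Since $\mathcal{E}_1(u_0) = \tfrac12\|u_0\|_{\mathfrak{H}^1}^2 \le \tfrac12\|u_0\|_{\mathfrak{H}^2}^2$, letting $t\to\infty$ and using that $t\mapsto \int_0^t\|\partial_t u\|_{L^2_x}^2\,ds$ is nondecreasing and bounded yields $\int_0^\infty\|\partial_t u\|_{L^2_x}^2\,dt \le \alpha^{-1}\mathcal{E}_1(u_0) \le C(\|u_0\|_{\mathfrak{H}^2})$.

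For the first integral I would invoke (\ref{V5}), namely $\|u(t)\|_{\mathfrak{H}^2} + \alpha\int_0^t\|\nabla\partial_t u\|_{L^2_x}^2\,ds \le C(\|u_0\|_{\mathfrak{H}^2})$, valid for every $t>0$ with a constant independent of $t$. Letting $t\to\infty$ once more gives $\int_0^\infty\|\nabla\partial_t u\|_{L^2_x}^2\,dt \le \alpha^{-1}C(\|u_0\|_{\mathfrak{H}^2})$. Adding the two bounds produces the asserted estimate.

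The one point that needs care — and where the genuine content sits — is that the constant in (\ref{V5}) is really $t$-independent: in the proof of Proposition \ref{global} this bound comes out of a Gronwall argument whose exponential factor is of the form $e^{C\mathcal{E}_1(u_0)\int_0^t\|\partial_t u\|_{L^2_x}^2\,ds}$, and the finiteness of $\int_0^\infty\|\partial_t u\|_{L^2_x}^2\,dt$ from the first step (equivalently, from (\ref{V1})) is exactly what caps this factor uniformly in $t$. Hence the two steps must be carried out in this order; otherwise the corollary is pure bookkeeping of estimates already in hand, reflecting the dissipative nature of (\ref{1}).
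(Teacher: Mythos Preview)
Your argument is correct and matches the paper's own approach, which simply states that the corollary follows from (\ref{V5}); your write-up just makes explicit the passage $t\to\infty$ and the observation that (\ref{V1}) feeds into the Gronwall factor so that the constant in (\ref{V5}) is uniform in $t$. Nothing further is needed.
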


Similar proof as Proposition \ref{global} can give a local bound for $\|\nabla \partial_t\tilde{u}\|_{L^2_x}$ with respect to $s$.
\begin{lemma}
Let $\tilde{u}$ be the solution to (\ref{8.29.2}) with initial data $u(t,x)$, then we have
\begin{align}\label{ttu}
\|\nabla \partial_t\tilde{u}\|_{L^2_x}\lesssim e^{sC}\|\partial_t u\|_{L^2_x}.
\end{align}
\end{lemma}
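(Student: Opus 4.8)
The plan is to transplant the third-order energy argument in the proof of Proposition~\ref{global} from the $t$-variable to the heat variable $s$. Differentiating $\partial_s\widetilde{u}=\tau(\widetilde{u})$ in $t$ and commuting covariant derivatives by means of the torsion-free identity and (\ref{curvature2}) shows that $v:=\partial_t\widetilde{u}$ solves a covariant heat equation
\begin{align*}
\nabla_s v=h^{ij}\nabla_i\nabla_j v-h^{ij}\Gamma^k_{ij}\nabla_k v+h^{ij}\mathbf{R}(\widetilde{u})(v,\partial_i\widetilde{u})\partial_j\widetilde{u},
\end{align*}
whose zeroth-order coefficient is quadratic in $d\widetilde{u}$ (up to the constant $z$ the same equation is satisfied by $\partial_s\widetilde{u}$, which is the quantity actually needed to control $\phi_s$). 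Integrating (\ref{10.1}) over $\mathbb{H}^2\times[0,s]$ gives the first-order bound
\begin{align*}
\|\partial_t\widetilde{u}(s,t,x)\|_{L^2_x}^2+\int_0^s\|\nabla\partial_t\widetilde{u}(s',t,x)\|_{L^2_x}^2\,ds'\le\|\partial_t u(t,x)\|_{L^2_x}^2,
\end{align*}
so $\|\partial_t\widetilde{u}\|_{L^2_x}$ and $\|\nabla\partial_t\widetilde{u}\|_{L^2_{s',x}}$ are for free bounded by $\|\partial_t u\|_{L^2_x}$; the task is to upgrade this to a pointwise-in-$s$ bound for $\|\nabla\partial_t\widetilde{u}\|_{L^2_x}$.

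Next I would differentiate $\|\nabla\partial_t\widetilde{u}\|_{L^2_x}^2$ in $s$. Applying $\nabla_i$ to the equation above, commuting $\nabla_i$ past the Laplacian and past $\nabla_s$ (both commutators producing only curvature terms, controlled by $|d\widetilde{u}|^2|\nabla v|+|d\widetilde{u}||\nabla d\widetilde{u}||v|$ via (\ref{curvature2})), integrating by parts over $\mathbb{H}^2$ and using the diamagnetic inequality (Lemma~\ref{wusijue3}) leads to a Bochner-type inequality
\begin{align*}
\frac{d}{ds}\|\nabla\partial_t\widetilde{u}\|_{L^2_x}^2\le-\|\nabla^2\partial_t\widetilde{u}\|_{L^2_x}^2+C\int_{\mathbb{H}^2}\big(|d\widetilde{u}|^2|\nabla\partial_t\widetilde{u}|^2+|d\widetilde{u}||\nabla d\widetilde{u}||\partial_t\widetilde{u}||\nabla\partial_t\widetilde{u}|+|\partial_s\widetilde{u}||d\widetilde{u}||\partial_t\widetilde{u}||\nabla\partial_t\widetilde{u}|\big)\,{\rm dvol_h}.
\end{align*}
Each reaction integral is then treated exactly as in the $E_3$-estimate of Proposition~\ref{global}, by Gagliardo--Nirenberg (Lemma~\ref{wusijue}), the diamagnetic inequality and Young's inequality, absorbing every power of $\|\nabla^2\partial_t\widetilde{u}\|_{L^2_x}$ into the negative term. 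The surviving coefficients are bounded using the heat-flow estimates of Section~3: $\|d\widetilde{u}\|_{L^\infty_x}\lesssim c(\lambda)\|du\|_{L^2_x}^2$ for $s\ge\lambda$ by (\ref{3.14a}), $\int_0^1\|d\widetilde{u}\|_{L^\infty_x}^2\,ds\lesssim\|du\|_{L^2_x}^2$ by (\ref{9.3}), $\int_0^s\|\nabla d\widetilde{u}\|_{L^2_x}^2\,ds'\lesssim e^{Cs}\|du\|_{L^2_x}^2$ from integrating (\ref{8.4}), and $\|\partial_s\widetilde{u}\|_{L^\infty_x},\|\partial_t\widetilde{u}\|_{L^\infty_x}\lesssim s^{-1/2}e^{-s/4}\|\partial_t u\|_{L^2_x}$ from (\ref{10.112}) and (\ref{sdf}). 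The outcome is a differential inequality $\frac{d}{ds}\|\nabla\partial_t\widetilde{u}\|_{L^2_x}^2\le c(s)\|\nabla\partial_t\widetilde{u}\|_{L^2_x}^2+g(s)$ with $c,g\in L^1_{\rm loc}([0,\infty))$ and $\int_0^s(c+g)\,ds'$ bounded by $(1+s)$ times a polynomial in $\|du\|_{L^2_x}$ multiplied by $\|\partial_t u\|_{L^2_x}^2$.

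Gronwall's inequality then gives $\|\nabla\partial_t\widetilde{u}(s)\|_{L^2_x}^2\lesssim e^{Cs}\big(\|\nabla\partial_t\widetilde{u}(s_0)\|_{L^2_x}^2+\|\partial_t u\|_{L^2_x}^2\big)$ for any $0<s_0<s$, and choosing $s_0\in(0,1)$ via the first-order identity so that $\|\nabla\partial_t\widetilde{u}(s_0)\|_{L^2_x}^2\le\|\partial_t u\|_{L^2_x}^2$ (together with the smoothing bound $\|\nabla\partial_t\widetilde{u}(s)\|_{L^2_x}\lesssim s^{-1/2}\|\partial_t u\|_{L^2_x}$ from Lemma~\ref{appendix1} and Lemma~\ref{anxin} for $s$ near $0$) yields $\|\nabla\partial_t\widetilde{u}\|_{L^2_x}\lesssim e^{sC}\|\partial_t u\|_{L^2_x}$, as claimed; the constant and $C$ depend on $\|u(t)\|_{\mathcal{H}^3}$, which is finite by Proposition~\ref{global}. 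The main obstacle is the small-$s$ regime combined with the top-order reaction term $|d\widetilde{u}||\nabla d\widetilde{u}||\partial_t\widetilde{u}||\nabla\partial_t\widetilde{u}|$: since $\|d\widetilde{u}\|_{L^\infty_x}$ is not uniformly bounded as $s\to0$, one must distribute derivatives so carefully that the part of this term not absorbed into $\|\nabla^2\partial_t\widetilde{u}\|_{L^2_x}$ is integrable in $s$ on $[0,1]$, and this is precisely where the smoothing estimate (\ref{9.3}) for the energy density---rather than just its boundedness for $s\ge\lambda$---is indispensable; once the Gronwall coefficient is shown to lie in $L^1_{\rm loc}$, the exponential-in-$s$ conclusion follows exactly as in (\ref{keypoint}).
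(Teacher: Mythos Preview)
Your core strategy---deriving the linearized heat equation for $v=\partial_t\widetilde{u}$, differentiating $\|\nabla v\|_{L^2_x}^2$ in $s$, and closing by Gronwall exactly as in the $\mathcal{E}_3$-argument of Proposition~\ref{global}---is precisely what the paper intends by ``similar proof as Proposition~\ref{global}''. The paper simply runs that energy estimate with $s$ in the role of $t$ and the parameter derivative $\partial_t\widetilde{u}$ in the role of the flow derivative; Gronwall from $s=0$ then gives
\[
\|\nabla\partial_t\widetilde{u}(s)\|_{L^2_x}\lesssim e^{Cs}\|\nabla\partial_t\widetilde{u}(0)\|_{L^2_x}=e^{Cs}\|\nabla\partial_t u\|_{L^2_x},
\]
entirely parallel to how (\ref{090}) is derived from (\ref{keypoint}).

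The extra work you do---choosing $s_0$ by the mean-value theorem and invoking an $s^{-1/2}$ smoothing bound for small $s$---is an attempt to prove the inequality \emph{as literally printed}, with $\|\partial_t u\|_{L^2_x}$ rather than $\|\nabla\partial_t u\|_{L^2_x}$ on the right. That literal statement cannot hold uniformly down to $s=0$: at $s=0$ it would read $\|\nabla\partial_t u\|_{L^2_x}\lesssim\|\partial_t u\|_{L^2_x}$, which is false in general, and indeed your $s^{-1/2}$ bound does not recover $e^{Cs}$ there. This is a typo in the paper, not a defect in your argument: the downstream uses of (\ref{ttu}) are (\ref{XVV3}) and (\ref{axinl}), both of which carry $\|\nabla\partial_t u\|_{L^2_x}$ on the right, confirming that $\|\nabla\partial_t u\|_{L^2_x}$ is what is meant. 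With that reading you may drop the $s_0$/smoothing detour and simply Gronwall from $s=0$; the proof then coincides with the paper's.
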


\section{Convergence to a harmonic map}
In this section, we always fix the frame $\Xi$ in Proposition \ref{3.3} to be $\Xi(x)=\Theta(Q_{\infty}(x))$.
\subsection{Estimates of the heat tension field}
Recall that the heat tension filed $\phi_s$ satisfies
\begin{align}\label{heat}
\phi_s= h^{ij}D_i\phi_j-h^{ij}\Gamma^k_{ij}\phi_k.
\end{align}
And we define the LL tension filed by
\begin{align}\label{LL}
w\triangleq \phi_t- z\big(h^{ij}D_i\phi_j-h^{ij}\Gamma^k_{ij}\phi_k\big).
\end{align}
In fact (\ref{heat}) is the gauged equation for the heat flow equation, and (\ref{LL}) is the gauged equation for the LL flow (\ref{1}), see Lemma 3.1.
The evolution of $\phi_s$ and $w$ with respect to $t$ and $s$ respectively is given by the following lemma.

\begin{lemma}\label{asdf}
The tension fields $\phi_i, i=1,2$, $\phi_s$, $w$ satisfy
\begin{align}
D_t\phi_s&=zh^{ij}D_iD_j\phi_s-zh^{ij}\Gamma^k_{ij}D_k\phi_s+\partial_s w-zh^{ij}(\phi_i \wedge\phi_s)\phi_j \label{heating}\\
\partial_s w&=h^{ij}D_iD_jw-h^{ij}\Gamma^k_{ij}D_kw+h^{ij}(\phi_t\wedge \phi_i)\phi_j-zh^{ij}(\phi_s \wedge\phi_i)\phi_j.\label{LLing}
\end{align}
\end{lemma}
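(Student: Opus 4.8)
The plan is to obtain both identities by direct gauge computation, using four facts already at hand. First, the zero-torsion relation holds not merely for the spatial indices as in (\ref{pknb}) but for any two coordinate directions, $D_a\phi_b=D_b\phi_a$ for $a,b\in\{s,t,x_1,x_2\}$, since those vector fields commute on the enlarged base $\Bbb R^+\times[0,T]\times\Bbb H^2$ and the pullback connection is torsion free. Second, in the caloric gauge $A_s\equiv 0$, so $D_s$ acts as $\partial_s$ on sections (in particular $D_s w=\partial_s w$ and $D_s\phi_s=\partial_s\phi_s$). Third, the commutator identity (\ref{commut})--(\ref{commut1}) extends in the same way to $[D_a,D_b]\varphi\longleftrightarrow\mathbf{R}(\partial_a\widetilde{u},\partial_b\widetilde{u})(\varphi e)$ for all $a,b$, whose $\Bbb H^2$-curvature content is the wedge term in (\ref{commut1}). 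Fourth, the structural equations (\ref{heat}) for $\phi_s$ and (\ref{LL}) for $w$ hold for every $s\ge0$, because $\widetilde{u}$ solves the heat flow (\ref{8.29.2}) for all $s$; in particular $w=\phi_t-z\phi_s$.

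For (\ref{heating}) I would start from $D_t\phi_s=D_s\phi_t$ (zero torsion in the pair $(s,t)$), insert $\phi_t=w+z\big(h^{ij}D_i\phi_j-h^{ij}\Gamma^k_{ij}\phi_k\big)$ from (\ref{LL}), and commute $D_s$ inward. The term $D_s w=\partial_s w$ is the source term. Commuting $D_sD_i\phi_j=D_iD_s\phi_j+[D_s,D_i]\phi_j$ and then using zero torsion ($D_s\phi_j=D_j\phi_s$, $D_s\phi_k=D_k\phi_s$) produces $z\,h^{ij}D_iD_j\phi_s-z\,h^{ij}\Gamma^k_{ij}D_k\phi_s$, while $[D_s,D_i]\phi_j$, through (\ref{commut})--(\ref{commut1}) and $\phi_s\wedge\phi_i=-\phi_i\wedge\phi_s$, supplies the remaining term $-z\,h^{ij}(\phi_i\wedge\phi_s)\phi_j$. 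Collecting the pieces gives (\ref{heating}).

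For (\ref{LLing}) I would instead write $\partial_s w=D_s w=D_s\phi_t-z\,D_s\phi_s=D_t\phi_s-z\,\partial_s\phi_s$, and then evaluate $D_t\phi_s$ and $\partial_s\phi_s$ by differentiating the heat equation (\ref{heat}) in $t$, respectively in $s$, and commuting exactly as above. This gives $D_t\phi_s=h^{ij}D_iD_j\phi_t-h^{ij}\Gamma^k_{ij}D_k\phi_t+h^{ij}(\phi_t\wedge\phi_i)\phi_j$ and $\partial_s\phi_s=h^{ij}D_iD_j\phi_s-h^{ij}\Gamma^k_{ij}D_k\phi_s+h^{ij}(\phi_s\wedge\phi_i)\phi_j$, the last terms coming from $[D_t,D_i]$ and $[D_s,D_i]$ respectively. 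Subtracting $z$ times the second identity from the first and using $\phi_t-z\phi_s=w$ to reassemble $h^{ij}D_iD_jw-h^{ij}\Gamma^k_{ij}D_kw$ out of the second-order terms leaves exactly $h^{ij}(\phi_t\wedge\phi_i)\phi_j-z\,h^{ij}(\phi_s\wedge\phi_i)\phi_j$, which is (\ref{LLing}).

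I do not expect any serious obstacle: the whole argument is commuting covariant derivatives, and the commutations terminate immediately without generating further corrections. The only points that deserve care are (a) checking that the zero-torsion and commutator identities genuinely extend from the spatial directions to the $s$- and $t$-directions, which follows from $[\partial_s,\partial_i]=[\partial_t,\partial_i]=[\partial_s,\partial_t]=0$ on the base together with the connection on $\widetilde{u}^{*}T\Bbb H^2$ being the pullback of the Levi-Civita connection of $\Bbb H^2$; (b) bookkeeping the signs produced by the antisymmetry of the wedge product and by $z=\alpha-\sqrt{-1}\beta$; and (c) applying (\ref{heat}) and (\ref{LL}) in the form valid for all $s\ge0$ rather than only at $s=0$.
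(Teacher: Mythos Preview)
Your proposal is correct and follows essentially the same route as the paper's own proof: both identities are obtained by commuting $D_s$, $D_t$, $D_i$ using the torsion-free relation $D_a\phi_b=D_b\phi_a$, the commutator identity (\ref{commut})--(\ref{commut1}), and the caloric condition $A_s=0$. For (\ref{heating}) your derivation is line-for-line the paper's; for (\ref{LLing}) you organize the computation slightly more directly---using $w=\phi_t-z\phi_s$ and differentiating (\ref{heat}) separately in $t$ and in $s$---whereas the paper first expands $w$ via (\ref{LL}), commutes $D_s$ through, substitutes $z\phi_s=\phi_t-w$ to extract the $w$-terms, and then shows the residual $\phi_t$-terms cancel against $D_s\phi_t$ up to curvature; the two arrangements are equivalent and involve the same commutations.
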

\begin{proof}
By the torsion free identity and the commutator identity, we have
\begin{align*}
D_t\phi_s&=D_s\phi_t=D_s(w+zh^{ij}D_i\phi_j-zh^{ij}\Gamma^k_{ij}\phi_k)\\
&=\partial_s w+zh^{ij}D_sD_i\phi_j-zh^{ij}\Gamma^k_{ij}D_s\phi_k\\
&=\partial_s w+zh^{ij}D_iD_j\phi_s-zh^{ij}\Gamma^k_{ij}D_k\phi_s+zh^{ij}(\phi_s \wedge\phi_i)\phi_j.
\end{align*}
Thus (\ref{heating}) is verified. The rest is to prove (\ref{LLing}). By (\ref{LL}), the torsion free identity and the commutator identity (\ref{commut}), we obtain
\begin{align*}
\partial_sw&=D_sw=D_s(\phi_t-zh^{ij}D_i\phi_j-zh^{ij}\Gamma^k_{ij}\phi_k)\\
&=D_s \phi_t-zh^{ij}D_sD_i\phi_j+zh^{ij}\Gamma^k_{ij}D_s\phi_k\\
&=D_s \phi_t-zh^{ij}D_iD_j\phi_s+zh^{ij}\Gamma^k_{ij}D_k\phi_s+zh^{ij}(\phi_s\wedge\phi_i)\phi_j\\
&=D_s \phi_t-h^{ij}D_iD_j(\phi_t-w)+zh^{ij}\Gamma^k_{ij}D_k(\phi_t-w)+zh^{ij}(\phi_s\wedge\phi_i)\phi_j\\
&=h^{ij}D_iD_jw-h^{ij}\Gamma^k_{ij}D_kw+D_s \phi_t-h^{ij}D_iD_j\phi_t+h^{ij}\Gamma^k_{ij}D_k\phi_t+zh^{ij}(\phi_s\wedge\phi_i)\phi_j.
\end{align*}
The torsion free identity and the commutator identity further show
\begin{align*}
&D_s \phi_t-h^{ij}D_iD_j\phi_t+h^{ij}\Gamma^k_{ij}D_k\phi_t\\
&=D_t \phi_s-h^{ij}D_iD_t\phi_j+h^{ij}\Gamma^k_{ij}D_t\phi_k\\
&=D_t \phi_s-h^{ij}D_tD_i\phi_j+h^{ij}\Gamma^k_{ij}D_t\phi_k-h^{ij}(\phi_i\wedge\phi_t)\phi_j\\
&=D_t \phi_s-D_t(h^{ij}D_i\phi_j-h^{ij}\Gamma^k_{ij}\phi_k)-h^{ij}(\phi_i\wedge\phi_t)\phi_j\\
&=D_t \phi_s-D_t\phi_s-h^{ij}(\phi_i\wedge\phi_t)\phi_j.
\end{align*}
Combining the two above equalities together yields (\ref{LLing}).
\end{proof}

\begin{lemma}\label{hushuo}
The heat tension filed $\phi_s$ satisfies
\begin{align}
\|\nabla \phi_s(s,t,x)\|_{L^2_x}&\lesssim e^{Cs}\|\nabla\partial_tu\|_{L^2_x}+\|\partial_tu\|_{L^2_x}\label{XVV1}\\
\|\phi_s(s,t,x)\|_{L^2_x}&\lesssim \|\partial_tu\|_{L^2_x}.\label{XVV2}
\end{align}
The LL filed $w$ satisfies
\begin{align}
\|w(s,t,x)\|_{L^2_x}&\lesssim \|\partial_tu\|_{L^2_x} \label{XVV5}\\
\|\nabla w(s,t,x)\|_{L^2_x}&\lesssim s\|\nabla\partial_tu\|_{L^2_x} +\|\nabla\partial_tu\|_{L^2_x}\label{XVV3}\\
\|\partial_s w(s,t,x)\|_{\dot{H}^{-1}_x}&\lesssim \big(e^{Cs} + 1 + c(s)\big) \|\partial_tu\|_{L^2_x}.\label{XVV4}
\end{align}
\end{lemma}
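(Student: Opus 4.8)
The strategy is to translate everything, via the isometry between complex-valued functions and sections of $\widetilde u^{*}T\Bbb H^{2}$ furnished by the orthonormal frame $\{\Omega_{1},J\Omega_{1}\}$ of the caloric gauge, into statements about $\partial_{s}\widetilde u$, $\partial_{t}\widetilde u$ and $\partial_{t}\widetilde u-z\partial_{s}\widetilde u$. Under this identification $|\phi_{s}|=|\partial_{s}\widetilde u|$, $|w|=|\partial_{t}\widetilde u-z\partial_{s}\widetilde u|\le|\partial_{t}\widetilde u|+|z||\partial_{s}\widetilde u|$, and $|D_{i}\varphi|=|\nabla_{i}(\varphi e)|$, so that covariant derivatives of the gauge fields are exactly covariant derivatives of the underlying maps. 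Then (\ref{XVV2}) and (\ref{XVV5}) are immediate: the pointwise heat-kernel dominations (\ref{VI3}) and (\ref{VI9}) give $|\phi_{s}|,|\partial_{t}\widetilde u|\lesssim e^{s\Delta_{\Bbb H^{2}}}(|\partial_{t}u|)$, and the $L^{2}_{x}$-boundedness of $e^{s\Delta_{\Bbb H^{2}}}$ (the case $r=p=2$ of (\ref{huhu89})) yields $\|\phi_{s}\|_{L^{2}_{x}}+\|w\|_{L^{2}_{x}}\lesssim\|\partial_{t}u\|_{L^{2}_{x}}$.

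For (\ref{XVV1}) I would run the parabolic energy estimate for $\|\nabla\partial_{s}\widetilde u\|_{L^{2}_{x}}$ along the heat flow, in the same way Proposition \ref{global} (and the bound (\ref{ttu})) is proved: differentiating in $s$ the equation for $\phi_{s}$ obtained from (\ref{heat}), namely $\partial_{s}\phi_{s}=h^{ij}D_{i}D_{j}\phi_{s}-h^{ij}\Gamma^{k}_{ij}D_{k}\phi_{s}+h^{ij}(\phi_{s}\wedge\phi_{i})\phi_{j}$, taking the $L^{2}_{x}$ pairing with $\phi_{s}$ after one further derivative, integrating by parts and absorbing the top-order term, one gets $\frac{d}{ds}\|\nabla\partial_{s}\widetilde u\|_{L^{2}_{x}}^{2}\lesssim C(1+\|\widetilde e\|_{L^{\infty}_{x}})\|\nabla\partial_{s}\widetilde u\|_{L^{2}_{x}}^{2}+(\mbox{lower order in }\phi_{s},A)$, where $\|\widetilde e\|_{L^{\infty}_{x}}$ is controlled by (\ref{9.3})–(\ref{3.14a}). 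Gronwall with initial value $\|\nabla\partial_{s}\widetilde u(0)\|_{L^{2}_{x}}=|z|^{-1}\|\nabla\partial_{t}u\|_{L^{2}_{x}}$ produces the factor $e^{Cs}\|\nabla\partial_{t}u\|_{L^{2}_{x}}$, and the connection- and curvature-type lower-order pieces are absorbed into $\|\partial_{t}u\|_{L^{2}_{x}}$ by (\ref{XVV2}) and (\ref{3.30}); finally $\|\nabla\phi_{s}\|_{L^{2}_{x}}\le\|D\phi_{s}\|_{L^{2}_{x}}+\|A\|_{L^{\infty}_{x}}\|\phi_{s}\|_{L^{2}_{x}}$ together with the diamagnetic inequality close (\ref{XVV1}).

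For $w$ I would use Lemma \ref{asdf}: (\ref{LLing}) is a genuine covariant heat equation $\partial_{s}w=h^{ij}D_{i}D_{j}w-h^{ij}\Gamma^{k}_{ij}D_{k}w+F$ with forcing $F=h^{ij}(\phi_{t}\wedge\phi_{i})\phi_{j}-zh^{ij}(\phi_{s}\wedge\phi_{i})\phi_{j}$ obeying $|F|\lesssim\widetilde e\,(|\phi_{t}|+|\phi_{s}|)\lesssim\widetilde e\,e^{s\Delta_{\Bbb H^{2}}}(|\partial_{t}u|)$, and, crucially, $w(0,t,x)=0$, which is the gauged Landau–Lifshitz equation (\ref{jnk}) evaluated at $s=0$. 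A covariant energy estimate for $\|\nabla w\|_{L^{2}_{x}}$ emanating from the zero datum, fed by (\ref{3.14a}), (\ref{VI3}), (\ref{VI9}) and (\ref{ttu}), produces (\ref{XVV3}), the factor $(s+1)$ being a polynomial weight coming from the $s$-integration. For (\ref{XVV4}) I would deliberately avoid any second derivative of $w$ and any derivative of $A$: testing $\partial_{s}w=h^{ij}D_{i}D_{j}w-h^{ij}\Gamma^{k}_{ij}D_{k}w+F$ against $\varphi$ with $\|\varphi\|_{\dot H^{1}_{x}}\le1$ and integrating by parts once gives $|\langle h^{ij}D_{i}D_{j}w-h^{ij}\Gamma^{k}_{ij}D_{k}w,\varphi\rangle|=|\langle h^{ij}D_{j}w,D_{i}\varphi\rangle|\lesssim(1+\|A\|_{L^{\infty}_{x}})\|\nabla w\|_{L^{2}_{x}}\|\nabla\varphi\|_{L^{2}_{x}}$, where the Poincaré inequality (\ref{uv1}) and diamagnetism convert $D\varphi$ into $\nabla\varphi$; hence $\|\partial_{s}w\|_{\dot H^{-1}_{x}}\lesssim(1+\|A\|_{L^{\infty}_{x}})\|\nabla w\|_{L^{2}_{x}}+\|F\|_{L^{2}_{x}}$. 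Now $\|\nabla w\|_{L^{2}_{x}}\le\|\nabla\partial_{t}\widetilde u\|_{L^{2}_{x}}+|z|\|\nabla\partial_{s}\widetilde u\|_{L^{2}_{x}}\lesssim(e^{Cs}+c(s))\|\partial_{t}u\|_{L^{2}_{x}}$ by (\ref{ttu}) and the analogous smoothing estimate for the heat tension field $\partial_{s}\widetilde u=\tau(\widetilde u)$ (since $\tau(u)=z^{-1}\partial_{t}u$, with the $c(s)$ reflecting the parabolic-smoothing blow-up as $s\to0$), $\|F\|_{L^{2}_{x}}\lesssim\|\widetilde e\|_{L^{\infty}_{x}}\|\partial_{t}u\|_{L^{2}_{x}}\lesssim c(s)\|du\|_{L^{2}_{x}}^{2}\|\partial_{t}u\|_{L^{2}_{x}}$ by (\ref{3.14a}), and $\|A\|_{L^{\infty}_{x}}\lesssim\|du\|_{L^{2}_{x}}(1+\|\partial_{t}u\|_{L^{2}_{x}})$ by (\ref{3.30}); combining these yields (\ref{XVV4}).

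The delicate point is (\ref{XVV4}): producing a $\dot H^{-1}_{x}$ bound for $\partial_{s}w$ controlled by $\|\partial_{t}u\|_{L^{2}_{x}}$ alone forces the integration-by-parts/duality maneuver above — a baby version of the device used below to treat $\nabla A_{i}\phi_{s}$ — and a careful accounting of the smoothing singularity as $s\to0$, which is exactly the role of $c(s)$. Everything else is a routine combination of the energy inequalities (\ref{8.3}), (\ref{uu}), the maximum principle, and the a priori bounds (\ref{3.14a}), (\ref{ttu}), (\ref{3.30}), (\ref{VI3}), (\ref{VI9}) already at our disposal.
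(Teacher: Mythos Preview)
Your proposal is correct and aligns closely with the paper's proof. The treatments of (\ref{XVV2}) and (\ref{XVV5}) are identical. For (\ref{XVV1}) both arguments amount to the same energy/Gronwall estimate for $\nabla\partial_{s}\widetilde u$ along the heat flow (the paper just cites the $\beta=0$ case of (\ref{keypoint}) rather than rederiving it) followed by the conversion $\|\nabla\phi_{s}\|_{L^{2}_{x}}\le\|\nabla\partial_{s}\widetilde u\|_{L^{2}_{x}}+\|\sqrt{h^{ii}}A_{i}\|_{L^{\infty}_{x}}\|\partial_{s}\widetilde u\|_{L^{2}_{x}}$ via (\ref{3.30}). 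For (\ref{XVV4}) your covariant integration-by-parts $\langle h^{ij}D_{i}D_{j}w-h^{ij}\Gamma^{k}_{ij}D_{k}w,\varphi\rangle=-\langle h^{ij}D_{j}w,D_{i}\varphi\rangle$ is exactly the paper's duality trick for $h^{ii}\partial_{i}A_{i}w$ (cf.~(\ref{1897})), just packaged more cleanly; the remaining terms $A_{i}\partial_{i}w$, $A_{i}A_{i}w$, $F$ are handled identically.

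The only substantive difference is your route to (\ref{XVV3}). The paper does not run an energy estimate on (\ref{LLing}) starting from $w(0)=0$; it simply writes $w=\phi_{t}-z\phi_{s}$ and bounds $\nabla\phi_{t}$ directly via (\ref{ttu}) (noting $|\nabla\phi_{t}|\le|\nabla\partial_{t}\widetilde u|+\|\sqrt{h^{ii}}A_{i}\|_{L^{\infty}_{x}}|\partial_{t}\widetilde u|$), then combines with (\ref{XVV1}). This is shorter and avoids having to control $\int_{0}^{s}\|\widetilde e\|_{L^{\infty}_{x}}^{2}ds'$ near $s=0$, which your PDE approach needs. Your alternative works too (using $\|\widetilde e(0)\|_{L^{\infty}_{x}}\le C(\|u\|_{\mathfrak H^{2}})$ and (\ref{uu}), (\ref{3.14a})), and in fact yields a bound in $\|\partial_{t}u\|_{L^{2}_{x}}$ rather than $\|\nabla\partial_{t}u\|_{L^{2}_{x}}$, which dovetails with the form required in (\ref{XVV4}); but the paper's decomposition is the more economical path.
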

\begin{proof}
(\ref{XVV2}) follows by (\ref{VI3}) and the $L^2$ to $L^2$ boundedness of $e^{s\Delta_{\Bbb H^2}}$. (\ref{XVV5}) is a consequence of the identity $w=\phi_t-z\phi_s$ and (\ref{VI9}).
Let $\beta=0$ in Proposition \ref{global}, since $\widetilde{u}$ satisfies the heat flow equation, (\ref{keypoint}) implies
$$
\|\nabla\partial_s\widetilde{u}(s,t,x)\|_{L^2_x}\lesssim e^{Cs}\|\nabla\partial_s\widetilde{u}(0,t,x)\|_{L^2_x}.
$$
Thus the heat flow equation and (\ref{1}) shows
\begin{align}\label{090}
\|\nabla\partial_s\widetilde{u}(s,t,x)\|_{L^2_x}\lesssim e^{Cs}\|\nabla\partial_tu(t,x)\|_{L^2_x}.
\end{align}
By the definition  $\nabla {\psi^j_s} = {\partial _i}\left\langle {{\partial _s}{\tilde{u}},{e_j}} \right\rangle d{x^i}$ and the comparability
\begin{align}\label{ttu3}
{\partial _i}\left\langle {{\partial _s}\widetilde{u},{e_i}} \right\rangle  = \left\langle {{\nabla _i}{\partial _s}\widetilde{u},{e_i}} \right\rangle  + \left\langle {{\partial _s}\widetilde{u},{\nabla _i}{e_i}} \right\rangle,
\end{align}
we obtain
\begin{align*}
 {\left| {\nabla {\phi _s}} \right|^2} \le {h^{ii}}{\left| {{\nabla _i}{\partial _s}\widetilde{u}} \right|^2} + {h^{ii}}{\left| {{\partial _s}\widetilde{u}} \right|^2}{\left| {{A_i}} \right|^2}\le {\left| {\nabla {\partial _s}\widetilde{u}} \right|^2} + {\left| {{\partial _s}\widetilde{u}} \right|^2}{h^{ii}}{\left| {{A_i}} \right|^2}.
\end{align*}
Then (\ref{3.30}) and $\|u(t)\|_{\mathfrak{H}^2}\le C(u_0)$ proved in Proposition \ref{global} yield
$${\left\| {\nabla {\phi _s}} \right\|_{L_x^2}} \le {\left\| {\nabla {\partial _s}\widetilde{u}} \right\|_{L_x^2}} + C({u_0}){\left\| {{\partial _s}\widetilde{u}} \right\|_{L_x^2}},
$$
which combined with (\ref{090}) gives
$${\left\| {\nabla {\phi _s}} \right\|_{L_x^2}} \le {e^{Cs}}{\left\| {\nabla {\partial _t}u} \right\|_{L_x^2}} + C({u_0}){\left\| {{\partial _s}u} \right\|_{L_x^2}}.
$$
Thus (\ref{XVV1}) follows from (\ref{XVV2}).
By the definition $w=\phi_t-z\phi_s$, in order to verify (\ref{XVV3}), it suffices to prove
\begin{align}\label{ttu2}
\|\nabla \phi_t\|_{L^2}\lesssim e^{Cs}\|\partial_t u\|_{L^2}.
\end{align}
Similar calculations as (\ref{ttu3}) imply
$${\left\| {\nabla {\phi _t}} \right\|_{L_x^2}} \le {\left\| {\nabla {\partial _t}\widetilde{u}} \right\|_{L_x^2}} + {\left\| {{\partial _t}\widetilde{u}} \right\|_{L_x^2}}.
$$
Hence (\ref{XVV3}) follows by (\ref{ttu}) and (\ref{VI4}). It remains to prove (\ref{XVV4}).
Expanding $D_i$ in (\ref{LLing}) as $\partial_i+\sqrt{-1}A_i$ yields
\begin{align*}
{\partial _s}{w} = {\Delta _{{\Bbb H^2}}}{w} + \sqrt{-1}{h^{ii}}{\partial _i}{A_i}{w} +2\sqrt{-1} {h^{ii}}{A_i}{\partial _i}{w} -{h^{ii}}{A_i}{A_i}{w} + {h^{ii}}( \phi_t\wedge\phi_i){\phi _j}-z{h^{ii}}(\phi_s\wedge\phi_i){\phi_i}.
\end{align*}
Then we have from Lemma \ref{wusijue} that
\begin{align}\label{k908}
\|{\partial _s}{w}\|_{\dot{H}^{-1}_x}&\lesssim \|\nabla w\|_{L^2_x}+\|{h^{ii}}{\partial _i}{A_i}{w}\|_{\dot{H}^{-1}}+\|{h^{ii}}{A_i}{\partial _i}{w}\|_{L^2_x}+\|{h^{ii}}{A_i}A_i{w}\|_{L^2_x}+\|{h^{ii}}(\phi_t\wedge\phi_i){\phi _i}\|_{L^2_x}\nonumber\\
&+\|{h^{ii}}(\phi_s\wedge\phi_i){\phi _i}\|_{L^2_x}.
\end{align}
The later three terms on the right hand side of (\ref{k908}) are easy to bound by H\"older and (\ref{3.30}):
\begin{align}
&{\left\| {{h^{ii}}{A_i}{\partial _i}w} \right\|_{L_x^2}} + {\left\| {{h^{ii}}{A_i}{A_i}w} \right\|_{L_x^2}} + {\left\| {h^{ii}}( {\phi_t}\wedge\phi_i){\phi _i} \right\|_{L_x^2}} + {\left\| {{h^{ii}}( \phi _s\wedge\phi _i){\phi _i}} \right\|_{L_x^2}}\nonumber\\
&\lesssim {\left\| {\nabla w} \right\|_{L_x^2}} + {\left\| w \right\|_{L_x^2}} + {\left\| {{\partial _t}\widetilde{u}} \right\|_{L_x^2}}{\left\| \widetilde{e } \right\|_{L_x^\infty }} + {\left\| {{\partial _s}\widetilde{u}} \right\|_{L_x^2}}{\left\| \widetilde{e} \right\|_{L_x^\infty }}.\label{hvnk}
\end{align}
Furthermore,  by (\ref{XVV5}), (\ref{3.14a}), we have the following acceptable bound for (\ref{hvnk})
$$
(\ref{hvnk})\le \left( {{e^{Cs}} + 1 + c(s)}\right){\left\| {{\partial _t}u} \right\|_{L_x^2}}.
$$
Therefore it suffices to bound ${\left\| {{h^{ii}}{\partial _i}{A_i}w} \right\|_{{\dot{H}}_x^{ - 1}}}$. By duality and integration by parts, we have
\begin{align}\label{1897}
{\left\| {{h^{ii}}{\partial _i}{A_i}w} \right\|_{{\dot H}_x^{ - 1}}}& = \mathop {\sup }\limits_{{{\left\| g \right\|}_{{\dot H}_x^{  1}}} \le 1} \int_{{{\Bbb  H}^2}} {\left\langle {{h^{ii}}{\partial _i}{A_i}w,g} \right\rangle } {\rm{dvol_h}} \nonumber\\
&\le \mathop {\sup }\limits_{{{\left\| g \right\|}_{{\dot H}_x^{ 1}}} \le 1} \int_{{{\Bbb H}^2}} {\left( {\sqrt {{h^{ii}}} \left| {{A_i}} \right|\left| {\nabla w} \right|\left| g \right| + \left| w \right|\left| {\nabla g} \right|} \right)} {\rm{dvol_h}}.
\end{align}
Then (\ref{XVV4}) follows by (\ref{3.30}), (\ref{XVV3}), (\ref{XVV5}) and (\ref{1897}).
\end{proof}

\begin{lemma}\label{jian}
If $\alpha>0$, then $\phi_s$ in (\ref{heating}) satisfies
\begin{align}\label{axinl}
&{\left\| {{\phi _s}(s)} \right\|_{L_t^\infty L_x^2([T,\infty ) \times {\Bbb H^2})}} \nonumber\\
&\le \left( {{e^{Cs}} + 1 +c(s)} \right)\left( {{{\left\| {{\partial _t}u} \right\|}_{L_t^2L_x^2([T,\infty ) \times {\Bbb H^2})}} + {{\left\| {\nabla {\partial _t}u} \right\|}_{L_t^2L_x^2([T,\infty ) \times {\Bbb H^2})}}} \right)+{\left\| {{\phi _s}(T)} \right\|_{L_x^2}}.
\end{align}
\end{lemma}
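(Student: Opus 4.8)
The plan is to regard \eqref{heating}, for each fixed $s>0$, as a linear inhomogeneous heat equation for $\phi_s$ in the variable $t$, with complex diffusion coefficient $z$ satisfying $\Re z=\alpha>0$, and to apply the smoothing estimate of Lemma \ref{anxin} on the interval $[T,\infty)$. Expanding $D_i=\partial_i+\sqrt{-1}A_i$, $D_t=\partial_t+\sqrt{-1}A_t$ and using $h^{ij}(\partial_i\partial_j-\Gamma^k_{ij}\partial_k)=\Delta_{\mathbb{H}^2}$, equation \eqref{heating} takes the form $\partial_t\phi_s-z\Delta_{\mathbb{H}^2}\phi_s=g$ with
\begin{align*}
g=-\sqrt{-1}A_t\phi_s+\sqrt{-1}zh^{ij}(\partial_iA_j)\phi_s+2\sqrt{-1}zh^{ij}A_i\partial_j\phi_s-zh^{ij}A_iA_j\phi_s-\sqrt{-1}zh^{ij}\Gamma^k_{ij}A_k\phi_s+\partial_sw-zh^{ij}(\phi_i\wedge\phi_s)\phi_j.
\end{align*}
Since $u\in C([0,T];\mathcal{H}^3)$ and the heat flow \eqref{8.29.2} is smoothing, $\phi_s(s,\cdot,\cdot)$ lies in $L^\infty_tH^2$ on compact time intervals, so Lemma \ref{anxin} yields
\begin{align*}
\|\phi_s(s)\|_{L^\infty_tL^2_x([T,\infty)\times\mathbb{H}^2)}\lesssim\|\phi_s(s,T,x)\|_{L^2_x}+\|g\|_{L^2_t\dot H^{-1}_x([T,\infty)\times\mathbb{H}^2)},
\end{align*}
and everything reduces to estimating $\|g\|_{L^2_t\dot H^{-1}_x}$.

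The only term of $g$ carrying a derivative of the connection coefficient is $h^{ij}(\partial_iA_j)\phi_s$, and this is the main obstacle: there is no global bound for $\nabla A_i$, since the harmonic map prevents the energy from decaying and hence $\phi_i$ is not integrable in $s$. I would circumvent this exactly as indicated in the introduction, using the duality definition of $\dot H^{-1}_x$ and integrating by parts to move the derivative off $A_j$: for a test function with $\|\varphi\|_{\dot H^1_x}\le1$,
\begin{align*}
\Big|\int_{\mathbb{H}^2}h^{ij}(\partial_iA_j)\phi_s\,\overline{\varphi}\,\mathrm{dvol}_h\Big|\lesssim\int_{\mathbb{H}^2}\sqrt{h^{ii}}|A_i|\big(|\nabla\phi_s|\,|\varphi|+|\phi_s|\,|\nabla\varphi|+|\phi_s|\,|\varphi|\big)\mathrm{dvol}_h,
\end{align*}
where the last term absorbs the contribution of $\partial_ih^{ij}$ and $\Gamma$. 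Using $\|\sqrt{h^{ii}}A_i\|_{L^\infty_sL^\infty_x}\lesssim C(u_0)$ from \eqref{3.30} together with $\|u\|_{\mathfrak H^2}\le C(u_0)$ (Proposition \ref{global}), $\|\varphi\|_{L^2_x}\lesssim\|\nabla\varphi\|_{L^2_x}\le1$ by \eqref{uv1} and \eqref{uv5}, and the bounds $\|\nabla\phi_s\|_{L^2_x}\lesssim e^{Cs}\|\nabla\partial_tu\|_{L^2_x}+\|\partial_tu\|_{L^2_x}$, $\|\phi_s\|_{L^2_x}\lesssim\|\partial_tu\|_{L^2_x}$ from \eqref{XVV1}–\eqref{XVV2}, this term contributes $(e^{Cs}+1)\big(\|\nabla\partial_tu\|_{L^2_tL^2_x}+\|\partial_tu\|_{L^2_tL^2_x}\big)$ after integration in $t\in[T,\infty)$.

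For the remaining terms of $g$ the $\dot H^{-1}_x$ norm is dominated by the $L^2_x$ norm, and each is estimated by H\"older together with the a-priori bounds already at our disposal: $\|A_t\|_{L^\infty_sL^\infty_x}\lesssim\|\partial_tu\|^2_{L^2_x}\le C(u_0)$ by \eqref{VI2}; $\|\widetilde e(s)\|_{L^\infty_x}\lesssim c(s)\|e_0\|_{L^1_x}\le c(s)C(u_0)$ by \eqref{3.14a}, which handles $h^{ij}(\phi_i\wedge\phi_s)\phi_j$ since it is pointwise $\lesssim\widetilde e|\phi_s|$; $\|\partial_sw\|_{\dot H^{-1}_x}\lesssim(e^{Cs}+1+c(s))\|\partial_tu\|_{L^2_x}$ by \eqref{XVV4}; and the terms $h^{ij}A_i\partial_j\phi_s$, $h^{ij}A_iA_j\phi_s$, $h^{ij}\Gamma^k_{ij}A_k\phi_s$ are controlled by $\|\sqrt{h^{ii}}A_i\|_{L^\infty}$ times $\|\nabla\phi_s\|_{L^2_x}$ or $\|\phi_s\|_{L^2_x}$, i.e.\ again by \eqref{3.30}, \eqref{XVV1}, \eqref{XVV2}. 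Collecting these estimates and integrating in $t$ over $[T,\infty)$ gives
\begin{align*}
\|g\|_{L^2_t\dot H^{-1}_x([T,\infty)\times\mathbb{H}^2)}\lesssim(e^{Cs}+1+c(s))\big(\|\partial_tu\|_{L^2_tL^2_x([T,\infty)\times\mathbb{H}^2)}+\|\nabla\partial_tu\|_{L^2_tL^2_x([T,\infty)\times\mathbb{H}^2)}\big),
\end{align*}
which, together with the smoothing inequality above and $\phi_s(s,T,x)=\phi_s(T)$, proves \eqref{axinl}. Besides the integration-by-parts manoeuvre for $h^{ij}(\partial_iA_j)\phi_s$, the point that requires care is the bookkeeping: one must keep the genuinely small factors $\|\partial_tu\|_{L^2_tL^2_x([T,\infty))}$ and $\|\nabla\partial_tu\|_{L^2_tL^2_x([T,\infty))}$ (which tend to $0$ as $T\to\infty$ by Corollary \ref{4.1a}) at the first power, absorbing every other spatial norm into the $u_0$-dependent constant and the explicit $s$-weights $e^{Cs}$ and $c(s)$.
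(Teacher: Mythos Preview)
Your proof is correct and follows essentially the same route as the paper: expand $D_i$ and $D_t$ to rewrite \eqref{heating} as $\partial_t\phi_s-z\Delta_{\mathbb{H}^2}\phi_s=g$, apply the smoothing estimate of Lemma~\ref{anxin}, handle the dangerous term $h^{ij}(\partial_iA_j)\phi_s$ in $\dot H^{-1}_x$ by duality and integration by parts (exactly as done for the analogous term in the proof of \eqref{XVV4}), and bound the remaining terms in $L^2_x$ via \eqref{3.30}, \eqref{VI2}, \eqref{3.14a}, \eqref{XVV1}, \eqref{XVV2} and \eqref{XVV4}. The paper's own proof is in fact just a two-line sketch that points back to the computation in \eqref{k908}--\eqref{1897}; your write-up simply makes those steps explicit.
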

\begin{proof}
Expanding $D_i$ in (\ref{heating}) as $\partial_i+\sqrt{-1}A_i$, using Lemma \ref{anxin}, (\ref{uv1}), (\ref{uv5}) we have
\begin{align}
 &{\left\| {{\phi _s}(s)} \right\|_{L_t^\infty L_x^2([T,\infty ) \times {H^2})}}\nonumber\\
 &\lesssim {\left\| {{\phi _s}(T)} \right\|_{L_x^2}} + {\left\| {{\partial _s}w} \right\|_{L_t^2\dot H_x^{ - 1}}} + {\left\| {{h^{ii}}{\partial _i}{A_i}{\phi _s}} \right\|_{L_t^2\dot H_x^{ - 1}}} + {\left\| {{h^{ii}}{A_i}{\partial _i}{\phi _s}} \right\|_{L_t^2L_x^2}} \label{1909}\\
 &+ {\left\| {{h^{ii}}{A_i}{A_i}{\phi _s}} \right\|_{L_t^2L_x^2}} + {\left\| {{h^{ii}}( \phi _s\wedge\phi_i){\phi _i}} \right\|_{L_t^2L_x^2}}+\|A_t\phi_s\|_{L^2_{t}L^2_x}. \label{1910}
\end{align}
The terms in (\ref{1910}), (\ref{1909}) can be estimated by $\|\partial_t u\|_{L^2_{x}}$ as what we have done for terms in (\ref{k908}).  Then the desired bound in (\ref{axinl}) follows from Lemma \ref{hushuo}.
\end{proof}

\subsection{The proof of Theorem 1.1}
Finally, we prove Theorem 1.1 based on Corollary \ref{4.1a} and Lemma \ref{jian}.
\begin{proposition}
Let $u$ be a solution to (\ref{1}) in $C([0,\infty)\times \mathcal{H}^3)$. Then as $t\to \infty$, $u(t,x)$ converges to a harmonic map, namely
$$
\mathop {\lim }\limits_{t \to \infty } \mathop {\lim }\limits_{x\in\Bbb H^2 }{\rm{dist}}_{\Bbb H^2}(u(t,x),Q_{\infty}(x))= 0,
$$
where $Q_{\infty}(x):\Bbb H^2\to \Bbb H^2$ is some harmonic map.
\end{proposition}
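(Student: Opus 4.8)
The plan is to control $d_{\mathbb{H}^2}(u(t,x),Q_\infty(x))$ by the length, in the heat variable $s$, of the caloric flow $\widetilde{u}(\cdot,t,x)$, thereby reducing the convergence to the decay of $\|\partial_t u(t,\cdot)\|_{L^2_x}$, and then to extract that decay from Corollary \ref{4.1a} together with the energy inequality for $\mathcal{E}_2$ established in the proof of Proposition \ref{global}. Throughout, $Q_\infty$ denotes the harmonic map furnished by Lemma \ref{11.1}, i.e. the common ($t$-independent) limit of the heat flows $\widetilde{u}(s,t,\cdot)$ as $s\to\infty$; in particular it is the limit of the heat flow issued from $u_0$.

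\textbf{Step 1: reduction to decay of $\|\partial_t u(t,\cdot)\|_{L^2_x}$.} For fixed $t$, the curve $s\mapsto\widetilde{u}(s,t,x)$ joins $u(t,x)$ (at $s=0$) to $Q_\infty(x)$ (as $s\to\infty$) by Lemma \ref{11.1}, so for every $x\in\mathbb{H}^2$,
\begin{align*}
d_{\mathbb{H}^2}(u(t,x),Q_\infty(x))\le\int_0^\infty\big|\partial_s\widetilde{u}(s,t,x)\big|\,ds .
\end{align*}
I would bound the right-hand side uniformly in $x$ by splitting the integral at $s=1$. Since $\partial_s\widetilde{u}(0,t,x)=\tau(u(t,x))$, the estimate (\ref{9.2}) and the Cauchy--Schwarz inequality give
\begin{align*}
\int_0^1\big\|\partial_s\widetilde{u}(s,t,\cdot)\big\|_{L^\infty_x}\,ds\lesssim\big\|\tau(u(t,\cdot))\big\|_{L^2_x},
\end{align*}
while (\ref{10.112}) (obtained from the maximum principle applied to (\ref{8.3}) and the heat-kernel decay (\ref{huhu899})) yields $\sup_x|\partial_s\widetilde{u}(s,t,x)|\le s^{-1/2}e^{-s/8}\|\tau(u(t,\cdot))\|_{L^2_x}$, which is integrable on $[1,\infty)$. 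Combining these and using $\|\tau(u(t,\cdot))\|_{L^2_x}\sim\|\partial_t u(t,\cdot)\|_{L^2_x}$ from (\ref{V2}), I obtain
\begin{align*}
\sup_{x\in\mathbb{H}^2}d_{\mathbb{H}^2}(u(t,x),Q_\infty(x))\lesssim\|\partial_t u(t,\cdot)\|_{L^2_x}.
\end{align*}

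\textbf{Step 2: decay of $\|\partial_t u(t,\cdot)\|_{L^2_x}$.} By Corollary \ref{4.1a} the function $y(t):=\|\partial_t u(t,\cdot)\|_{L^2_x}^2$ belongs to $L^1(0,\infty)$, and from the proof of Proposition \ref{global} one has $\frac{d}{dt}y(t)\le C(u_0)\|\nabla du(t,\cdot)\|_{L^2_x}^2\,y(t)\le C(u_0)\,y(t)$, where the last step uses the uniform bound $\|u(t)\|_{\mathfrak{H}^2}\le C(u_0)$ of Proposition \ref{global}. I would then invoke the elementary fact that a nonnegative $y\in L^1(0,\infty)$ with $y'\le Cy$ tends to $0$: the function $s\mapsto e^{-Cs}y(s)$ is nonincreasing, hence $y(t)\le e^{C(t-\tau)}y(\tau)\le e^{C}y(\tau)$ for all $\tau\in[t-1,t]$, so $y(t)\le e^{C}\int_{t-1}^t y(\tau)\,d\tau\to 0$ as $t\to\infty$. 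Feeding $y(t)\to 0$ into Step 1 finishes the proof.

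\textbf{The main obstacle.} The delicate point is the uniform-in-$x$ bound of Step 1. The caloric gauge and the energy method of Sections 3--4 naturally deliver $L^2_x$ control of $\phi_s$, $w$ and $\partial_t u$ (Lemmas \ref{hushuo}, \ref{jian}), whereas the $d_{\mathbb{H}^2}$ convergence requires an $L^\infty_x$ bound on $\int_0^\infty|\partial_s\widetilde{u}|\,ds$, and the two-dimensional hyperbolic Sobolev embeddings of Lemma \ref{wusijue} are too weak to pass from $\nabla\phi_s\in L^2_x$ to $\phi_s\in L^\infty_x$. The way around this is to avoid Sobolev entirely in this last step and exploit instead the pointwise heat-kernel decay (\ref{huhu899}) and the scale-invariant $L^\infty_x$ smoothing estimate (\ref{9.2}) for the heat flow, which convert $L^2_x$-smallness of $\tau(u(t,\cdot))$, i.e. of $\partial_t u(t,\cdot)$, directly into the desired sup-bound. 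A secondary subtlety is that Corollary \ref{4.1a} only yields a priori subsequential decay of $\|\partial_t u(t,\cdot)\|_{L^2_x}$; the differential inequality for $\mathcal{E}_2$ in Step 2 is precisely what upgrades this to a genuine limit as $t\to\infty$.
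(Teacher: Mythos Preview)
Your proof is correct and takes a genuinely different---and simpler---route than the paper's.

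Both arguments start from the same inequality $d_{\mathbb{H}^2}(u(t,x),Q_\infty(x))\le\int_0^\infty\|\partial_s\widetilde u(s,t,\cdot)\|_{L^\infty_x}\,ds$, but diverge from there. The paper splits the $s$-integral into three pieces $[0,\mu]\cup[\mu,T]\cup[T,\infty)$ and handles the middle piece via Lemma~\ref{jian}, which in turn rests on the full caloric-gauge apparatus: the gauged evolution equation (\ref{heating}) for $\phi_s$, the estimates on $A_i$, $A_t$, $w$, $\partial_s w$ in Lemma~\ref{hushuo}, and the smoothing estimate of Lemma~\ref{anxin}. The paper never asserts that $\|\partial_t u(t,\cdot)\|_{L^2_x}\to 0$; it works only with the $L^1_t$ bound from Corollary~\ref{4.1a}, picks a single good time $T_2$ where $\|\partial_t u(T_2,\cdot)\|_{L^2_x}$ is small, and then uses (\ref{axinl}) to propagate that smallness forward to all $t>T_2$.

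Your argument bypasses Section~5.1 entirely. By combining (\ref{9.2}) on $[0,1]$ with (\ref{10.112}) on $[1,\infty)$ you obtain directly $\sup_x d_{\mathbb{H}^2}(u(t,x),Q_\infty(x))\lesssim\|\partial_t u(t,\cdot)\|_{L^2_x}$ with a constant uniform in $t$. You then upgrade the $L^1_t$ bound on $y(t)=\|\partial_t u(t,\cdot)\|^2_{L^2_x}$ to genuine decay $y(t)\to 0$ via the differential inequality $y'\le C(u_0)y$, which you correctly extract from the $\mathcal{E}_2$ computation (\ref{j089}) combined with the uniform $\mathfrak{H}^2$ bound of Proposition~\ref{global}. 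This last step is the key new observation: once you know $y\in L^1(0,\infty)$ and $y'\le Cy$, the monotonicity of $e^{-Ct}y(t)$ forces $y(t)\to 0$. With this in hand, Lemmas~\ref{asdf}, \ref{hushuo}, and \ref{jian} become unnecessary for the final convergence statement (though the heat-flow estimates of Section~3, in particular Lemma~\ref{11.1} and (\ref{9.2}), (\ref{10.112}), remain essential). What the paper's longer route buys is a self-contained argument that never needs the pointwise decay of $\|\partial_t u(t,\cdot)\|_{L^2_x}$, working instead purely from the space-time $L^2$ bounds of Corollary~\ref{4.1a}; your route trades the gauge analysis for a short ODE lemma.
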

\begin{proof}
For $u(t,x)$, by Proposition \ref{3.3}, the corresponding heat flow converges to some harmonic map uniformly for $x\in\Bbb H^2$. Then by the definition of the distance on complete manifolds, we have
\begin{align}\label{ppo0}
{\rm{dis}}{{\rm{t}}_{{\Bbb H^2}}}(u(t,x),Q_{\infty}(x)) \le \int_0^\infty  {{{\left\| {{\partial _s}u} \right\|}_{L_x^\infty }}ds}.
\end{align}
For any $T>0$, $\mu>0$, (\ref{10.112}), (\ref{VI3}) and (\ref{huhu89}) imply
\begin{align}
 \int_T^\infty  {{{\left\| {{\partial _s}u(s,t,x)} \right\|}_{L_x^\infty }}ds}  &\lesssim \int_T^\infty  {{e^{ - \frac{1}{8}s}}{{\left\| {{\partial _t}u(t,x)} \right\|}_{L_x^2}}} ds \lesssim {e^{ - \frac{T}{8}}}{\left\| {{\partial _t}u(t,x)} \right\|_{L_x^2}} \label{mu1}\\
 \int_0^\mu  {{{\left\| {{\partial _s}u(s,t,x)} \right\|}_{L_x^\infty }}ds}  &\lesssim \int_0^\mu  {{{\left\| {{e^{s{\Delta _{{\Bbb H^2}}}}}{\partial _t}u(t,x)} \right\|}_{L_x^\infty }}} ds \le \int_0^\mu  {{s^{ - \frac{1}{2}}}{{\left\| {{\partial _t}u(t,x)} \right\|}_{L_x^2}}} ds \nonumber\\
 &\lesssim {\mu ^{\frac{1}{2}}}{\left\| {{\partial _t}u(t,x)} \right\|_{L_x^2}} \label{mu2}
 \end{align}
Similarly, we have from (\ref{VI3}) that
\begin{align}
 \int_\mu ^T {{{\left\| {{\partial _s}u(s,t,x)} \right\|}_{L_x^\infty }}ds}  &\lesssim \int_\mu ^T {{{\left\| {{e^{(s - \frac{\mu }{2}){\Delta _{{\Bbb H^2}}}}}{\partial _s}u(\frac{\mu}{2},t,x)} \right\|}_{L_x^\infty }}} ds \nonumber\\
 &\lesssim \int_\mu ^T {{{(s - \frac{\mu}{2})}^{ - \frac{1}{2}}}{{\left\| {{\partial _s}u(\frac{\mu}{2},t,x)} \right\|}_{L_x^2}}} ds \nonumber\\
 &\lesssim {\mu ^{ - 1}}\int_\mu ^T {{{\left\| {{\phi _s}(\frac{\mu}{2},t,x)} \right\|}_{L_x^2}}} ds.\label{mu3}
\end{align}
Meanwhile, Lemma \ref{jian} yields for any $T_1>0$, $t\in[T_1,\infty)$
\begin{align}\label{mu4}
&{\left\| {{\phi _s}(\frac{\mu}{2},t,x)} \right\|_{L_x^2}}\nonumber\\
&\lesssim {\left\| {{\phi _s}(\frac{\mu}{2},{T_1},x)} \right\|_{L_x^2}} + C(\mu )\big({\left\| {{\partial _t}u(t,x)} \right\|_{L_t^2L_x^2([{T_1},\infty ) \times {\Bbb H^2}){\rm{ }}}}{\rm{ + }}{\left\| {\nabla {\partial _t}u(t,x)} \right\|_{L_t^2L_x^2([{T_1},\infty ) \times {\Bbb H^2})}}\big).
\end{align}
For any fixed $\epsilon>0$ sufficiently small, choose $T>0$ sufficiently large and $\mu>0$ sufficiently small, then $\|\partial_t u\|_{L^2_x}\le C(u_0)$ proved in Proposition \ref{global} imply
$${e^{ - \frac{T}{8}}}{\left\| {{\partial _t}u(t,x)} \right\|_{L_x^2}}+{\mu ^{\frac{1}{2}}}{\left\| {{\partial _t}u(t,x)} \right\|_{L_x^2}}\le \epsilon^2.$$
Thus (\ref{mu1}) and (\ref{mu2}) are acceptable.
By Corollary \ref{4.1a}, there exists $T_1$ such that
$$C(\mu )\left( {{{\left\| {{\partial _t}u(t,x)} \right\|}_{L_t^2L_x^2([{T_1},\infty ) \times {H^2})}} + {{\left\| {\nabla {\partial _t}u(t,x)} \right\|}_{L_t^2L_x^2([{T_1},\infty ) \times {H^2})}}} \right) \le \mu\epsilon^2.
$$
And since ${\left\| {{\phi _s}(\mu /2,t,x)} \right\|_{L_x^2}}\le \|\partial_t u(t)\|_{L^2}$, we infer from $\int^{\infty}_0\|\partial_t u\|^2_{L^2_x}ds<\infty$ that there exists some $T_2>T_1$ such that
$${\left\| {{\phi _s}(\mu /2,{T_2},x)} \right\|_{L_x^2}}\le \epsilon^2.$$
Then for $t>T_2$, (\ref{mu4}) is acceptable. Finally, we conclude from (\ref{mu3}), (\ref{mu1}), (\ref{mu2}), that for any $\epsilon>0$ sufficiently small, there exists $T_2>0$ such that whenever $t>T_2$, we have
$$\int_0^\infty  {{{\left\| {{\partial _s}u} \right\|}_{L_x^\infty }}ds}  \le \epsilon.
$$
Thus Theorem 1.1 follows by (\ref{ppo0}).
\end{proof}

\section{Appendix}
In the following lemma we collect some important properties of holomorphic harmonic maps between Poincare disks.
\begin{lemma}
Denote $\mathbb{D}=\{z:|z|<1\}$ with the hyperbolic metric to be the Poincare disk. Then any holomorphic map $f:\mathbb{D} \to \mathbb{D}$ is a harmonic map. If we assume that $f(z)$ can be analytically extended into a larger disk than the unit disk, and $f(\mathbb{D})\subset\{z:|z|\le \mu\}$, for some $0<\mu<1$, then the harmonic map $f$ satisfies
\begin{align}
\|df\|_{L^2}&<\infty\label{zui8}\\
e^{r(z)}\|df(z)\|_{L^{\infty}}&<\infty\label{zui9}\\
\|\nabla df\|_{L^2}+\|\nabla^2 df\|_{L^2}&<\infty,\label{zui10}
\end{align}
where $r$ is the geodesic distance between $z$ and the origin in $\mathbb{D}$.
Furthermore, if assume in addition that  for all $z\in\Bbb D$ and some $0<\mu_1\ll1$,
\begin{align}\label{zuiz28}
\sum^{3}_{|\gamma|\le 3}|\partial^\gamma_{x,y}f(z)|\le \mu_1,
\end{align}
then
\begin{align}
\|df\|_{L^2}&\lesssim \mu_1\label{zui81}\\
e^{r(z)}\|df(z)\|_{L^{\infty}}&\lesssim \mu_1\label{zui91}
\end{align}
\end{lemma}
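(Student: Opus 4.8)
The plan is to work throughout in the disk model, identifying the source $\mathbb{H}^2$ with $\mathbb{D}$ equipped with the explicit conformal metric $\rho^2|dz|^2$, $\rho(z)=\tfrac{2}{1-|z|^2}$, and using the same metric on the target, and to exploit systematically the identity $f_{\bar z}=0$. First I would record that $f$ is harmonic: for a map out of a Riemann surface the harmonic map equation is conformally invariant in the domain, so in the isothermal coordinate it reduces to $\partial_z\partial_{\bar z}f+\overline{\Gamma}^{w}_{ww}(f)\,\partial_z f\,\partial_{\bar z}f=0$, the only non-vanishing target Christoffel symbols of the K\"ahler metric $\rho^2|dw|^2$ being $\overline{\Gamma}^{w}_{ww}$ and its conjugate, while $\overline{\Gamma}^{w}_{w\bar w}=\overline{\Gamma}^{w}_{\bar w\bar w}=0$; holomorphy gives $\partial_{\bar z}f=0$, so both terms vanish and $\tau(f)=0$. (Equivalently, one may invoke the classical fact that $\pm$-holomorphic maps between K\"ahler manifolds are harmonic.)

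Next, for (\ref{zui8}) and (\ref{zui9}) I would compute the energy density explicitly. Using $f_{\bar z}=0$ and the Cauchy--Riemann equations one obtains, up to a universal constant, $e(f)(z)=\dfrac{(1-|z|^2)^2}{(1-|f(z)|^2)^2}\,|f'(z)|^2$. The hypothesis $f(\mathbb{D})\subset\{|w|\le\mu\}$ bounds $(1-|f|^2)^{-1}\le(1-\mu^2)^{-1}$, and the analytic extension of $f$ beyond $\overline{\mathbb{D}}$ bounds $\sup_{\overline{\mathbb{D}}}|f'|<\infty$; hence $e(f)(z)\lesssim(1-|z|^2)^2$. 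Since the geodesic distance from the origin satisfies $e^{-r(z)}\le 1-|z|^2\le 4e^{-r(z)}$, this gives $|df(z)|\lesssim e^{-r(z)}$, which is (\ref{zui9}); and integrating $|df|^2\lesssim(1-|z|^2)^2$ against $\mathrm{dvol}_h=\rho^2\,dx\,dy\sim(1-|z|^2)^{-2}\,dx\,dy$ over $\mathbb{D}$ produces a finite integral of size $\lesssim\mathrm{area}(\mathbb{D})$, which is (\ref{zui8}). For the refined bounds (\ref{zui81}), (\ref{zui91}) I would run the identical computation, using (\ref{zuiz28}) to replace $|f'|=|\partial_x f|$ by a quantity $\le\mu_1$ and $(1-|f|^2)^{-1}$ by a universal constant; this factors an extra $\mu_1$ out of every estimate.

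For (\ref{zui10}) -- the main computational step -- I would expand $\nabla df$ and $\nabla^2 df$ in the coordinates (\ref{vg}) as in (\ref{miao7}), (\ref{miao9}) and the analogous third-order displays, so that the components of $\nabla^k df$ for $k=1,2$ are finite sums of terms of the shape (a Euclidean derivative $\partial^{\beta}f$ with $|\beta|\le k+1$) $\times$ (a smooth function of $f$, from the target Christoffels and their $f$-derivatives) $\times$ (a product of source Christoffel symbols of the hyperbolic metric and their derivatives). The first factor is bounded on $\overline{\mathbb{D}}$ by analyticity, the second is bounded because $f$ ranges in the compact set $\{|w|\le\mu\}\Subset\mathbb{D}$, and the quantitative input is that for $\rho^2|dz|^2$ one has $|\partial^{m}\Gamma^{l}_{ij}(z)|\lesssim(1-|z|^2)^{-(m+1)}$, so every such coefficient is $\lesssim(1-|z|^2)^{-k}$. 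Taking the tensor norm brings in $k+1$ inverse metrics $(h^{ii})^{k+1}\sim(1-|z|^2)^{2(k+1)}$ together with a bounded factor $g_{ll}(f)$, whence $|\nabla^k df(z)|^2\lesssim(1-|z|^2)^{-2k}(1-|z|^2)^{2(k+1)}=(1-|z|^2)^2$ for both $k=1$ and $k=2$; integrating against $\mathrm{dvol}_h\sim(1-|z|^2)^{-2}\,dx\,dy$ once more gives finiteness, which is (\ref{zui10}).

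The only delicate point is the bookkeeping in this last step: one must verify that each differentiation of (or product of two of) the hyperbolic Christoffel symbols costs exactly one power of $(1-|z|^2)^{-1}$, and that the resulting loss of $2k$ powers is compensated precisely by the $2(k+1)$ powers of $(1-|z|^2)$ gained from the inverse-metric contractions. Everything else is routine: analyticity gives uniform bounds on $\partial^{\beta}f$ over $\overline{\mathbb{D}}$, the containment $f(\mathbb{D})\subset\{|w|\le\mu\}$ keeps the target Christoffels and $(1-|f|^2)^{-1}$ bounded, and $e^{-r(z)}\sim 1-|z|^2$ converts the decay into the geodesic weight. Finally, $\overline{f(\mathbb{D})}\subset\{|w|\le\mu\}$ is compact in $\mathbb{D}$, so together with (\ref{zui8}) and (\ref{zui10}) this exhibits $f$ as an admissible harmonic map in the sense of Definition 1.1, which is the assertion recorded in Remark 1.1.
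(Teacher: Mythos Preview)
Your argument is correct and follows essentially the same route as the paper's proof: bound the target Christoffel symbols and metric components by the containment $f(\mathbb{D})\subset\{|w|\le\mu\}$, bound the Euclidean derivatives of $f$ by analyticity past $\overline{\mathbb{D}}$, estimate the source Christoffel symbols and their derivatives by powers of $(1-|z|^2)^{-1}$, and then power-count against the inverse metric and volume form to obtain $|\nabla^k df|^2\lesssim(1-|z|^2)^2$ for $k=0,1,2$. One small remark: your reference to ``the coordinates (\ref{vg})'' is a slip, since (\ref{vg}) is the Iwasawa chart while you are (rightly) computing in the disk coordinates $(x,y)$; the displays (\ref{miao7})--(\ref{miao9}) are general coordinate formulas, so invoking them in the disk chart is fine, but you should say so rather than cite (\ref{vg}).
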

\begin{proof}
Since $f$ can be analytically extended to a larger disk, we can assume all the derivatives of $f(z)$ up to order three are bounded by some constant $C_0$. Recall the metric in the Poincare disk,
\begin{align}
4{\left( {1 - {{\left| z \right|}^2}} \right)^{ - 2}}\left( {dxdx + dydy} \right),
\end{align}
where $z=x+iy$. It is easy to see the Christoffel symbols under the coordinate $(x,y)$ are bounded by $4|z|(1-|z|^2)^{-1}$. Since $|f(z)|\le \mu<1$, for all $z\in\Bbb D$ and some $\mu\in(0,1)$, one has all the Christoffel symbols and metric components on the range $f(\mathbb{D})$ are bounded by some constant say $c(\mu)C_0$, where $c(\mu)$ depends only on $\mu$. Hence the energy density of $f$ is bounded by
\begin{align}\label{zui10}
|df|^2\lesssim C_0c(\mu)(1 - \left| z \right|^2)^2.
\end{align}
Since in the Poincare disk model, $r(z)=\ln(\frac{1+z}{1-z})$, thus we obtain (\ref{zui9}). Using the bound (\ref{zui10}) and the volume ${\rm{dvol}}={\left( {1 -{{\left| z \right|}^2}} \right)^{ - 2}}dxdy$, one immediately deduces (\ref{zui8}). Notice that in the formula of $|\nabla df|$ only Christoffel symbols itself appear (no derivatives of the Christoffel symbols emerge), and the Christoffel symbols on $M=\mathbb{D}$  are bounded by  $4|z|(1-|z|^2)^{-1}$, the Christoffel symbols on $N=\mathbb{D}$ are bounded by $c(\mu)C_0$ explained before.
Hence we still have
\begin{align}\label{zui11}
|\nabla df|^2\lesssim C_0c(\mu)(1 - \left| z \right|^2)^2.
\end{align}
And thus $\|\nabla df\|_{L^2}<\infty$ due to the volume ${\rm{dvol}}={\left( {1 -{{\left| z \right|}^2}} \right)^{ - 2}}dxdy$.
One can easily see by checking the explicit formula of Christoffel symbols under $(x,y)$ that the one order derivatives of them are bounded by
$$8(1-|z|^2)^{-1}+8|z|^2(1-|z|^2)^{-2}.
$$
Thus recalling the Christoffel symbols on $M=\mathbb{D}$  are bounded by  $8(1-|z|^2)^{-2}$ and the Christoffel symbols on $N=\mathbb{D}$ are bounded by $c(\mu)C_0$, we deduce that
\begin{align}\label{zui12}
|\nabla^2 df|^2\lesssim C_0\mu(1 - \left| z \right|^2)^2,
\end{align}
where $C_0$ now relies on the three order derivatives of $f(z)$ on $\mathbb{D}$.
(\ref{zui11}) and (\ref{zui12}) yield (\ref{zui10}).
If we assume in addition that (\ref{zuiz28}) holds, then let $\gamma=0$ in (\ref{zuiz28}), one has $|f(z)|<\frac{1}{2}$. Then all the Christoffel symbols and metric components on the range $f(\mathbb{D})$ are bounded by $C_1$ for some universal constant $C_1>0$. Thus the constant $C_0c(\mu)$ in (\ref{zui10}) to (\ref{zui12}) can be replaced by $C\mu_1$ for some universal constant $C>0$. Hence (\ref{zui81}) and (\ref{zui91}) follow by the same line.
\end{proof}

The following lemma proves the free torsion identity and the commutator identity in (\ref{pknb}), (\ref{commut}). The proof is standard to people working in geometric dispersive equations, we write the details down on one side for reader's convenience and on the other side to emphasize the curved background in our case.
\begin{lemma}
\begin{align}
D_i\phi_j&=D_j\phi_i\label{pknb1}\\
(D_iD_j-D_jD_i)\varphi&\longleftrightarrow{\bf R}(\partial_iu,\partial_ju)(\varphi e).\label{pknb2}
\end{align}
\end{lemma}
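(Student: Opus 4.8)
The plan is to reduce both identities to standard facts about the Levi-Civita connection on the target, carried across the fibrewise $\mathbb{R}$-linear isomorphism $\varphi\longleftrightarrow\varphi e=\varphi^1e_1+\varphi^2Je_1$ between complex-valued functions on $[0,T]\times\Bbb H^2$ and sections of $u^*(T\Bbb H^2)$. The crucial preliminary step is to check that this isomorphism intertwines the gauge-covariant derivative $D_i$ with the pulled-back covariant derivative $\nabla_i$, that is, $D_i\varphi\longleftrightarrow\nabla_i(\varphi e)$. I would prove this by a short computation: $|e_1|=1$ forces $\langle\nabla_ie_1,e_1\rangle=0$, so $\nabla_ie_1=\langle\nabla_ie_1,Je_1\rangle Je_1=A_iJe_1$; since $\Bbb H^2$ is Kähler, $J$ is parallel and $J^2=-\mathrm{Id}$, hence $\nabla_i(Je_1)=J\nabla_ie_1=-A_ie_1$; therefore $\nabla_i(\varphi^1e_1+\varphi^2Je_1)=(\partial_i\varphi^1-A_i\varphi^2)e_1+(\partial_i\varphi^2+A_i\varphi^1)Je_1$, which under $\mathbb{R}^2\cong\mathbb{C}$ is precisely $\partial_i\varphi+\sqrt{-1}A_i\varphi=D_i\varphi$.

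Granting this, (\ref{pknb1}) follows at once. Since $\{e_1,Je_1\}$ is an orthonormal basis of $T_{u(t,x)}\Bbb H^2$, one has $\phi_ie=\langle\partial_iu,e_1\rangle e_1+\langle\partial_iu,Je_1\rangle Je_1=\partial_iu$, so $D_i\phi_j\longleftrightarrow\nabla_i(\phi_je)=\nabla_i\partial_ju$. In the global coordinates (\ref{vg}), $\nabla_i\partial_ju^l=\partial_i\partial_ju^l+\overline{\Gamma}^l_{mn}(u)\partial_iu^m\partial_ju^n$ is manifestly symmetric in $i,j$, since $\partial_{x_i}\partial_{x_j}$ commute and $\overline{\Gamma}^l_{mn}$ is symmetric in $m,n$; equivalently, this is the torsion-freeness of the Levi-Civita connection on the target together with $[\partial_{x_i},\partial_{x_j}]=0$. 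Hence $\nabla_i\partial_ju=\nabla_j\partial_iu$, and applying the isomorphism backwards gives $D_i\phi_j=D_j\phi_i$.

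For (\ref{pknb2}) I would apply the intertwining relation twice. Applying $D_i\varphi\longleftrightarrow\nabla_i(\varphi e)$ first to $\varphi$ and then to the function $D_j\varphi$ gives $D_iD_j\varphi\longleftrightarrow\nabla_i\nabla_j(\varphi e)$; subtracting the expression with $i,j$ interchanged yields $(D_iD_j-D_jD_i)\varphi\longleftrightarrow(\nabla_i\nabla_j-\nabla_j\nabla_i)(\varphi e)$. By the definition of the curvature of the pulled-back bundle $u^*(T\Bbb H^2)$ together with $[\partial_{x_i},\partial_{x_j}]=0$, one has $(\nabla_i\nabla_j-\nabla_j\nabla_i)V=\mathbf{R}(u_*\partial_{x_i},u_*\partial_{x_j})V=\mathbf{R}(\partial_iu,\partial_ju)V$ for every $V\in u^*(T\Bbb H^2)$, and taking $V=\varphi e$ finishes the proof. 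An equivalent, more hands-on route is to differentiate $A_i=\langle\nabla_ie_1,Je_1\rangle$ to get $\partial_iA_j-\partial_jA_i=\langle\nabla_i\nabla_je_1-\nabla_j\nabla_ie_1,Je_1\rangle=\langle\mathbf{R}(\partial_iu,\partial_ju)e_1,Je_1\rangle$, the cross terms cancelling because $\langle\nabla_je_1,e_1\rangle=0$, and then to use that on the two-dimensional Kähler fibre the skew-symmetric, $J$-commuting endomorphism $\mathbf{R}(\partial_iu,\partial_ju)$ equals $\langle\mathbf{R}(\partial_iu,\partial_ju)e_1,Je_1\rangle$ times $J$, i.e.\ multiplication by $\sqrt{-1}$ under $\mathbb{R}^2\cong\mathbb{C}$.

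There is no genuine analytic obstacle here: the argument is entirely pointwise linear algebra on the fibres. The one place that deserves an explicit remark — and which I would flag — is that the identification $D_i\longleftrightarrow\nabla_i$ really uses that the target is Kähler ($\nabla J=0$), in addition to $J^2=-\mathrm{Id}$ and $|e_1|=1$; once this is in place, both (\ref{pknb1}) and (\ref{pknb2}) are formal consequences of, respectively, the symmetry of second covariant derivatives and the curvature identity on $T\Bbb H^2$, transported through the isomorphism $\varphi\mapsto\varphi e$.
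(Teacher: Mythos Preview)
Your proof is correct and rests on the same underlying facts as the paper's, but you organize it more cleanly. You isolate the intertwining relation $D_i\varphi\longleftrightarrow\nabla_i(\varphi e)$ as a preliminary lemma and then derive both identities as formal consequences; the paper instead unpacks $D_i\phi_j$ directly in components, arriving at $\langle\nabla_i\partial_ju,e_1\rangle+\sqrt{-1}\langle\nabla_i\partial_ju,Je_1\rangle$ by an explicit calculation that amounts to your intertwining step specialized to $\varphi=\phi_j$. For the commutator identity, the paper takes the hands-on route you sketch at the end---computing $(\partial_iA_j-\partial_jA_i)$ via $\langle\nabla_i\nabla_je_1,e_2\rangle$ and then matching components against $\mathbf{R}(\partial_iu,\partial_ju)(\varphi e)$ using the curvature symmetries---whereas your primary argument (apply the intertwining twice and invoke the pull-back curvature formula) is more conceptual and avoids the component bookkeeping. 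Both are equivalent; your version makes the structural reason transparent, while the paper's is closer to a verification.
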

\begin{proof}
In fact, by the definitions we have
\begin{align*}
D_i\phi_j&=\partial_i\psi_j^1-[A_j]^2_1\psi_j^2+\sqrt{-1}(\partial_i\psi_j^2+[A_j]^2_1\psi_j^1)\\
&={\partial _i}\left\langle {{\partial _j}u,{e_1}} \right\rangle  - [{A_i}]_1^2\left\langle {{\partial _j}u,J{e_1}} \right\rangle  + \sqrt { - 1} \left( {{\partial _i}\left\langle {{\partial _j}u,J{e_1}} \right\rangle  + [{A_i}]_1^2\left\langle {{\partial _j}u,{e_1}} \right\rangle } \right) \\
&= \left\langle {{\nabla _i}{\partial _j}u,{e_1}} \right\rangle  + \left\langle {{\partial _j}u,{\nabla _i}{e_1}} \right\rangle  - \left\langle
{{\nabla _i}{e_1},J{e_1}} \right\rangle \left\langle {{\partial _j}u,J{e_1}} \right\rangle\\
&+ \sqrt { - 1} \left( {\left\langle {{\nabla _i}{\partial _j}u,J{e_1}} \right\rangle  + \left\langle {{\partial _j}u,{\nabla _i}J{e_1}} \right\rangle  + \left\langle {{\nabla _i}{e_1},J{e_1}} \right\rangle \left\langle {{\partial _j}u,{e_1}} \right\rangle } \right) \\
&= \left\langle {{\nabla _i}{\partial _j}u,{e_1}} \right\rangle  + \sqrt { - 1} \left\langle {{\nabla _i}{\partial _j}u,J{e_1}} \right\rangle,
\end{align*}
which implies (\ref{pknb1}) by the identity ${{\nabla _i}{\partial _j}}u={{\nabla _j}{\partial _i}}u$. Now we turn to (\ref{pknb2}). By definition $D_i=\partial_i+\sqrt{-1}A_i$, we see
\begin{align}\label{1p19bnv}
(D_iD_j-D_jD_i)\varphi=(\partial_iA_j-\partial_jA_i)(\sqrt{-1}\varphi^1-\varphi^2)
\end{align}
It is easy to check $\langle\nabla_je_1,\nabla_ie_2\rangle=0$ by the fact $\{e_1,e_2\}$ is an orthonormal basis of $T\Bbb H^2$. Then we have
\begin{align}\label{2p19bnv}
\partial_iA_j=\langle\nabla_i\nabla_je_1,e_2\rangle+\langle\nabla_je_1,\nabla_ie_2\rangle=\langle\nabla_i\nabla_je_1,e_2\rangle.
\end{align}
By the compatibility of the induced covariant derivative $\nabla$ with the covariant derivative $\widetilde{\nabla}$ in $N=\Bbb H^2$ and the torsion free property, one deduces that
\begin{align}\label{p9b8v}
\nabla_i\nabla_j X-\nabla_j\nabla_i X={\bf R}(\partial_iu,\partial_ju) (X),
\end{align}
for any $X\in u^*TN$.
Meanwhile by the curvature identity,
\begin{align}\label{p19bnv}
\langle{\bf R}(X,Y)(Z),W\rangle=-\langle{\bf R}(Y,X)(Z),W\rangle=-\langle{\bf R}(X,Y)(W),Z\rangle,
\end{align}
(\ref{p9b8v}) further gives
\begin{align}
&\langle\nabla_i\nabla_je_1,e_2\rangle\varphi^1 -\langle\nabla_i\nabla_je_1,e_2\rangle\varphi^1
=\langle {\bf R} (\partial_iu,\partial_ju)(\sum^2_{i=1}\varphi^ie_i),e_2\rangle\label{3p19bnv}\\
&\langle\nabla_i\nabla_je_1,e_2\rangle\varphi^2 -\langle\nabla_i\nabla_je_1,e_2\rangle\varphi^2\nonumber\\
&=-\langle {\bf R} (\partial_iu,\partial_ju)(e_2),e_1\rangle\varphi^2
=-\langle {\bf R} (\partial_iu,\partial_ju)(\sum^2_{i=1}\varphi^ie_i),e_1\rangle.\label{4p19bnv}
\end{align}
Hence (\ref{1p19bnv}), (\ref{2p19bnv}), (\ref{3p19bnv}) and (\ref{4p19bnv}) yield
\begin{align*}
(D_iD_j-D_jD_i)\varphi\longleftrightarrow {\bf R}(\partial_iu,\partial_ju)(\varphi e).
\end{align*}
\end{proof}

\end{document}